\documentclass[a4paper,12pt]{article}

\usepackage{amsfonts,amsmath,amssymb,amsthm}
\usepackage{mathtools}
\usepackage{xcolor}
\usepackage{scalerel}
\usepackage{authblk}
\usepackage{enumitem}
\usepackage{soul}

\usepackage{tikz,tikz-cd,tikzit,quiver}
\usetikzlibrary{positioning}
\setlist[description]{font=\normalfont\itshape}

\newcounter{nthm}[section]
\theoremstyle{definition}
\newtheorem{definition}[nthm]{Definition}
\newtheorem{remark}[nthm]{Remark}
\newtheorem{notation}[nthm]{Notation}
\newtheorem{example}[nthm]{Example}
\theoremstyle{plain}
\newtheorem{proposition}[nthm]{Proposition}
\newtheorem{lemma}[nthm]{Lemma}
\newtheorem{theorem}[nthm]{Theorem}
\newtheorem{corollary}[nthm]{Corollary}

\newcommand{\ie}{\emph{i.e.\@}}
\newcommand{\eg}{\emph{e.g.\@}}

\newcommand{\ifc}{\text{if\quad}}
\newcommand{\undefined}{\text{undefined}}
\newcommand{\otherwise}{\text{otherwise}}

\newcommand{\im}{\mathrm{im}}

\newcommand{\into}{\hookrightarrow}
\newcommand{\pto}{\rightharpoonup}
\newcommand{\keq}{\simeq}
\newcommand{\fml}[1]{\ensuremath{\mathbf{#1}}}

\newcommand{\fpset}[1]{\mathcal{F}(#1)}
\newcommand{\Mid}{\ \middle|\ }

\newcommand{\Nset}{\mathbb{N}}

\newcommand{\Rset}{\mathbb{R}}

\DeclarePairedDelimiter{\absolute}{\lvert}{\rvert}
\newcommand{\abs}[1]{\absolute*{#1}}

\newcommand{\id}{\mathrm{id}}
\newcommand{\iso}{\cong}
\newcommand{\xto}[1]{\xrightarrow{#1}}
\newcommand{\comma}[2]{(#1 \!\Rightarrow\! #2)}
\newcommand{\Def}{\text{Def}}

\newcommand{\cat}[1]{\ensuremath{\mathbf{#1}}}
\newcommand{\SCat}[1]{\ensuremath{\mathbf{\Sigma}_\mathrm{#1}}}
\newcommand{\Set}{\cat{Set}}

\newcommand{\TopHaus}{\cat{Top}_{\mathrm{Haus}}}
\newcommand{\CMon}{\cat{CMon}}
\newcommand{\HausCMon}{\cat{HausCMon}}
\newcommand{\HAG}{\cat{HausAb}}

\title{Categories of sets with infinite addition}
\author[1,2]{Pablo Andr\'es-Mart\'inez\footnote{
The authors are grateful for the insightful comments and suggestions from the anonymous reviewer, which improved the proofs and fixed technical issues of previous versions of the manuscript.  
PAM was supported by the Engineering and Physical Sciences Research Council (grant EP/L01503X/1), EPSRC Centre for Doctoral Training in Pervasive Parallelism at the University of Edinburgh, School of Informatics.}}
\author[1]{Chris Heunen}
\affil[1]{\footnotesize The University of Edinburgh, School of Informatics, 10 Crichton Street, Edinburgh, UK}
\affil[2]{\footnotesize Quantinuum, Terrington House, 13-15 Hills Road, Cambridge, UK}

\begin{document}

\maketitle

\begin{abstract}
  We consider sets with infinite addition, called $\Sigma$-monoids, and contribute to their literature in three ways.
  First, our definition subsumes those from previous works and allows us to relate them in terms of adjuctions between their categories. In particular, we discuss $\Sigma$-monoids with additive inverses.
  Second, we show that every Hausdorff commutative monoid is a $\Sigma$-monoid, and that there is a free Hausdorff commutative monoid for each $\Sigma$-monoid.
  Third, we prove that $\Sigma$-monoids have well-defined tensor products, unlike topological abelian groups.
\end{abstract}

\section{Introduction}

A commutative monoid lets you add together any finite number of elements to produce an unambiguous result.
But if you want to aggregate an infinite collection of elements, monoids fall short, and we need a way to decide whether a given infinite collection can be assigned a well-defined result.
If the monoid is endowed with a topology, such a decision can be made in terms of the convergence of the sequence of partial sums --- this is how convergence of series in $\Rset$ is defined --- but simpler criteria than a topology exist (see~\cite{Haghverdi} for some examples).

Any such \emph{summability criterion} ultimately lifts binary addition to a partial function \(\Sigma\) that maps collections of elements of \(X\) to their result, if defined.
In this context, the criterion (\eg{} the topology) has been abstracted away, but it is crucial to axiomatise the structure it imprinted on \(\Sigma\), so that you can still reason about summability in this more general framework.
This has been the purpose of previous work introducing \(\Sigma\)-monoids~\cite{Haghverdi, Higgs}. Our work provides three novel contributions.
\begin{itemize}
  \item We classify and generalise the different flavours of \(\Sigma\)-monoids appearing in the literature, and describe their relation to each other in terms of adjuctions between their categories (Section~\ref{sec:SCat}). In particular, our most general \(\Sigma\)-monoid structure admits additive inverses (in contrast to~\cite{Haghverdi}) and generalises partially commutative monoids (in contrast to~\cite{Higgs}).
  \item We show that every Hausdorff commutative monoid is an instance of a \(\Sigma\)-monoid (Section~\ref{sec:topology}) and that the corresponding forgetful functor has a left adjoint. We interpret this as evidence that the key aspects of the structure imprinted on \(\Sigma\) by the topology are preserved.
  \item We show that each of the flavours of \(\Sigma\)-monoids considered here have a well-defined tensor product (Section~\ref{sec:tensor}).
\end{itemize}

The category of topological abelian groups lacks proper tensor products~\cite{TensorTAG, TensorMonoidalTAG}, but our results show that the tensor product of two Hausdorff commutative monoids is a \(\Sigma\)-monoid.
Thus we may enrich categories over \(\Sigma\)-monoids, where composition respects addition of morphisms, and hom-objects are Hausdorff commutative monoids.

This work originates from the effort to extend the categorical semantics of iteration from Manes and Arbib~\cite{ManesArbib} to quantum programs.
In quantum programs, it is theoretically possible to create linear combinations of different control flow paths, where the coefficients are complex numbers and, hence, admit additive inverses~\cite[Chapters 11 and 12]{Ying}.
Furthermore, parallel composition of states and programs in quantum computing is given by the tensor product~\cite{CQM}, contextualising the relevance of Section~\ref{sec:tensor}.
The generalised $\Sigma$-monoids presented in this paper are a cornerstone in the framework for categorical semantics of quantum while loops developed in the dissertation of the first author~\cite{pablo:thesis}.

\section{Notation}

We begin establishing the terminology and notation used throughout the paper.

\begin{definition} \label{def:families}
  A \emph{family} of elements of \(X\) is a multiset \(\{x_i \in X\}_{i \in I}\) indexed by a countable set \(I\).\footnote{Being an indexed multiset, there may be distinct indices \(i \neq j \in I\) with \(x_i = x_j\).}
  We consider two families \(\{x_i \in X\}_{i \in I}\) and \(\{y_j \in X\}_{j \in J}\) the same if there is a bijection \(\phi \colon I \to J\) satisfying $x_i=y_{\phi(i)}$ for all $i\in I$.
  Fix some infinite set $\mathcal{I}$ and let $X^*$ be the collection of all families of elements of \(X\) whose indexing set $I$ is a subset $I \subseteq \mathcal{I}$.
\end{definition}

\begin{proposition} \label{prop:family_set}
  For any set \(X\), the collection \(X^*\) is small.
\end{proposition}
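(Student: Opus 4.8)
The plan is a piece of set-theoretic bookkeeping, and the only real idea is already baked into the definition: confining every indexing set to be a \emph{subset} of the one fixed infinite set $\mathcal{I}$ is precisely what prevents the index sets from ranging over a proper class. So I do not expect a genuine obstacle here — the work is just to name the right ambient set and then push $X^*$ through it.

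First I would observe that a family $\{x_i \in X\}_{i \in I}$ with $I \subseteq \mathcal{I}$ is, by definition, the same datum as a function $f \colon I \to X$, and that $f$ is determined by its graph $\Gamma_f = \{(i, x_i) : i \in I\} \subseteq \mathcal{I} \times X$ (the domain $I$ being recoverable from $\Gamma_f$ as its first projection). Writing
\[
  R \coloneqq \{\, \Gamma \subseteq \mathcal{I} \times X : \Gamma \text{ is the graph of a function with countable domain} \,\},
\]
there is therefore a surjection $\rho \colon R \to X^*$ sending each graph to the family it represents, that is, to that family's identification class under the relabelling relation of Definition~\ref{def:families}; surjectivity holds because, by construction, every element of $X^*$ admits such a representative.

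Next I would check that $R$ is small: it is a subcollection of the power set $\mathcal{P}(\mathcal{I} \times X)$, which is a set since $\mathcal{I} \times X$ is, and any subcollection of a set is a set. Finally, $X^*$ is the image of the set $R$ under the class function $\rho$, hence a set by replacement — or, if one prefers to avoid replacement, the fibres of $\rho$ partition $R$, so $X^*$ injects into $\mathcal{P}(R)$ and is small on that ground.

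I would close by flagging, for the reader, the two complementary points this argument rests on. On the one hand, dropping the restriction $I \subseteq \mathcal{I}$ would make $X^*$ a proper class, since the collection of all countable sets is already one; on the other hand, the restriction costs nothing for the intended use, because $\mathcal{I}$ is infinite and so every countable set is in bijection with a subset of $\mathcal{I}$, whence every family over $X$ is identified, via Definition~\ref{def:families}, with one lying in $X^*$.
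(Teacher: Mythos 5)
Your proof is correct and follows essentially the same route as the paper's: encode each family as its graph inside $\mathcal{I} \times X$, note that $2^{\mathcal{I} \times X}$ is small, and obtain $X^*$ as a quotient (surjective image) of a subset of it. You merely spell out the quotient step in more detail than the paper does.
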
 
\begin{proof}
  Represent the multiset \(\{x_i \in X\}_{i \in I}\) as a set \(\{(i, x_i) \mid i \in I\} \subseteq \mathcal{I} \times X\). Since $\mathcal{I}$ and $X$ are sets, $2^{\mathcal{I} \times X}$ is small. It follows from the definition of $X^*$ that it is a quotient of a subset of $2^{\mathcal{I} \times X}$ and, hence, it is also small.
\end{proof}

\begin{notation} \label{not:families}
  We refer to the family \(\{x_i \in X\}_{i \in I}\) using the shorthand \(\{x_i\}_I\) where we use the convention that lower case letters \(x\), \(i\) are elements of the set denoted by the corresponding uppercase letter \(X\), \(I\).
  When the indexing set is not specified, all elements of the family are explicitly given within curly brackets: \(\{x\}\) is the singleton family and \(\{a,b,c\}\) is a finite family containing three elements.
  When explicit reference to the elements of the family is not needed, we denote a family with a bold font letter \(\fml{x} \in X^*\).
  Disjoint union of families is defined by the corresponding operations on their indexing sets:
  \[
    \{x_i\}_I \uplus \{x_j\}_J = \{x_k\}_{k \in I \uplus J}
  \]
  where we have assumed that $I$ and $J$ are disjoint, so that $I \uplus J \subseteq \mathcal{I}$. If this is not the case, recall that for any bijection \(\phi \colon J \to J'\) we consider $\{x_{j'}\}_{J'}$ and $\{x_{\phi(j)}\}_J$ to be the same family and, since $\mathcal{I}$ is an infinite set, we can assume without loss of generality that $I$ and $J'$ are disjoint.
  We do not define union of families, since we do not require it for the definitions and results in this paper.
  Intersection of families is defined below, and it only applies to the case where both families are restrictions of a larger one.
  If \(f \colon X \to Y\) is a function and \(\fml{x} = \{x_i\}_I\) a family in \(X^*\), we write \(f\fml{x}\) for the family \(\{f(x_i)\}_I\) in \(Y^*\).

\end{notation}

\begin{definition} \label{def:subfamily}
  A family \(\{x_j\}_J\) is a \emph{subfamily} of another \(\{x_i\}_I\) if there is an injection \(\phi \colon J \to I\) such that \(x_j = x_{\phi(j)}\) for each \(j \in J\).
  Formally, the injection $\phi$ is part of the data defining the subfamily, but in practice we always restrict to the case where $J \subseteq I$, so the injection is trivial.
  This gives rise to a partial order \((X^*, \subseteq)\) where \(\fml{x'} \subseteq \fml{x}\) if and only if \(\fml{x'}\) is a subfamily of \(\fml{x}\).
  If \(\fml{x}_0 \subseteq \fml{x}\) and \(\fml{x}_1 \subseteq \fml{x}\) we define the intersection $\fml{x}_0 \cap \fml{x}_1$ to be the subfamily obtained from the intersection of their indexing sets or, more formally, from the indexing set that is the pullback of the injections defining $\fml{x}_0$ and $\fml{x}_1$.
\end{definition}

\begin{notation} \label{not:keq}
  For partial functions \(f \colon A \pto B\) and \(g \colon A' \pto B\), the Kleene equality \(f(a) \keq g(a')\) indicates that \(f(a)\) is defined if and only if \(g(a')\) is defined and, when they are, their results agree.
  In particular, \(f(a) \keq b\) implies that \(f(a)\) is defined and is equal to \(b\).
  The equals sign \(f(a) = b\) is only used in the presence of partial functions when the context has already established that \(f(a)\) is defined.
\end{notation}

Section~\ref{sec:topology} will study the correspondence between \(\Sigma\)-monoids and Hausdorff commutative monoids. We assume basic knowledge of topology -- in particular, the definitions and properties of Hausdorff spaces and continuous functions -- the second chapter of~\cite{Munkres} contains an in-depth introduction. The following definitions are standard.

\begin{definition} \label{def:Hausdorff_commutative_monoid}
  Let \((X,\tau)\) be a Hausdorff space and let \((X,+)\) be a monoid.
  Then \((X,\tau,+)\) is a \emph{Hausdorff commutative monoid} if \(+ \colon X \times X \to X\) is continuous.
  A \emph{Hausdorff abelian group} is a Hausdorff commutative monoid \((X,\tau,+)\) where \((X,+)\) is an abelian group and the inversion map \(x \mapsto -x\) is continuous.
\end{definition}

The notion of convergence mentioned in the introduction is formalised in  general topology in terms of nets and their limit points.

\begin{definition} 
  A nonempty partially ordered set $(D,\leq)$ is \emph{directed} when every pair of elements \(a,b \in D\) has an upper bound $c \geq a,b$.
  A \emph{net} in $X$ is a function $\alpha \colon D \to X$ from a directed set $D$.
  A point \(x \in X\) is a \emph{limit point} of the net, written
  \begin{equation*}
    \lim \alpha = x\text,
  \end{equation*}
  if and only if for each open set \(U\) containing \(x\) there is an element \(a \in D\) such that \(\{\alpha(b) \mid a \leq b \in D \} \subseteq U\).
\end{definition}


\begin{definition} \label{def:subnet}
  A net \(\alpha \colon A \to X\) is a \emph{subnet} of \(\beta \colon B \to X\) if there is a function \(m \colon A \to B\) such that:
  \begin{itemize}
    \item the net \(\alpha\) factors through \(\beta\), that is, \(\alpha = \beta \circ m\);
    \item \(m\) is \emph{isotone}: if \(a \leq a'\) in $A$, then \(m(a) \leq m(a')\) in $B$;
    \item \(m(A)\) is \emph{cofinal} in \(B\): for every \(b \in B\) there is \(a \in A\) satisfying \(b \leq m(a)\).
  \end{itemize}
\end{definition}

\begin{proposition}\cite[Chapter 3, Exercise 8]{Munkres}\label{prop:subnet_limit}
  If $\alpha \colon A \to X$ is a subnet of $\beta \colon B \to X$ in a Hausdorff space $X$, then:
  \begin{equation*}
    \lim \beta = x \implies \lim \alpha = x
  \end{equation*}
\end{proposition}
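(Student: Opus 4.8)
The plan is to unwind the definitions and chase tails. Write $m \colon A \to B$ for the witnessing function from Definition~\ref{def:subnet}, so that $\alpha = \beta \circ m$, the map $m$ is isotone, and $m(A)$ is cofinal in $B$. Assume $\lim \beta = x$ and fix an arbitrary open set $U$ containing $x$; the goal is to produce an element $a_0 \in A$ with $\{\alpha(a) \mid a_0 \leq a \in A\} \subseteq U$.

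First I would apply the hypothesis $\lim \beta = x$ to the open set $U$ to obtain some $b_0 \in B$ with $\{\beta(b) \mid b_0 \leq b \in B\} \subseteq U$. Next I would invoke cofinality of $m(A)$ in $B$ at the element $b_0$ to get $a_0 \in A$ with $b_0 \leq m(a_0)$. Then, for any $a \in A$ with $a_0 \leq a$, isotonicity of $m$ gives $m(a_0) \leq m(a)$, hence $b_0 \leq m(a)$ by transitivity, and therefore $\alpha(a) = \beta(m(a)) \in U$ by the choice of $b_0$. This shows $\{\alpha(a) \mid a_0 \leq a \in A\} \subseteq U$, and since $U$ was an arbitrary open neighbourhood of $x$, we conclude $\lim \alpha = x$.

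There is essentially no obstacle here: the argument is a direct diagram chase through the three clauses of Definition~\ref{def:subnet}, with transitivity of the order on $B$ as the only background fact used. I would remark in passing that the Hausdorff hypothesis is not actually needed for this implication — it plays a role only in the companion statement that limit points are unique — but since the ambient spaces in Section~\ref{sec:topology} are Hausdorff anyway, stating it under that assumption costs nothing. One point worth stating carefully is that $A$ is directed (being the domain of a net), which guarantees the tail $\{a \in A \mid a_0 \leq a\}$ is nonempty, so the conclusion is not vacuous.
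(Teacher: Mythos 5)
Your proof is correct and is exactly the standard argument; the paper itself gives no proof, simply citing Munkres, and your diagram chase through the three clauses of Definition~\ref{def:subnet} (convergence of $\beta$ gives $b_0$, cofinality gives $a_0$ with $b_0 \leq m(a_0)$, isotonicity plus transitivity finishes) is the intended one. Your side remark that the Hausdorff hypothesis is not used in this direction is also accurate.
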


\section{\(\Sigma\)-monoids}
\label{sec:SCat}

The notion of \emph{strong} \(\Sigma\)-monoid that will be presented in this section was originally proposed by Haghverdi~\cite{Haghverdi}.
However, strong \(\Sigma\)-monoids are too restrictive for many applications: Proposition~\ref{prop:no_inverses} below shows that they forbid additive inverses.
In this section we introduce a weaker version of \(\Sigma\)-monoids that does admit additive inverses.

\begin{definition} \label{def:wSm}
  Let \(X\) be a set and \(\Sigma \colon X^* \pto X\) be a partial function. Say \(\fml{x} \in X^*\) is a \emph{summable family} if \(\Sigma \fml{x}\) is defined. The pair \((X,\Sigma)\) is a \emph{weak $\Sigma$-monoid} if the following axioms are satisfied.
  \begin{description}
    \item[{Singleton.}] \(\Sigma \{x\} \keq x\) for all \(x \in X\).
    \item[{Neutral element.}] There is an element \(0 \in X\) such that \(\Sigma \emptyset \keq 0\), and
    if a family \(\fml{x} \in X^*\) is summable, then so is its subfamily \(\fml{x}_\emptyset\) of elements other than \(0\).
    \item[{Bracketing.}] If \(\{\fml{x}_j\}_J\) is an indexed set of \emph{finite} summable families \(\fml{x}_j \in X^*\), and \(\fml{x} = \uplus_J \fml{x}_j\), then:
    \begin{equation*} \label{eq:bracketing}
      \Sigma \fml{x} \keq x \implies \Sigma \{\Sigma \fml{x}_j\}_J \keq x
    \end{equation*}
    \item[{Flattening.}] If \(\{\fml{x}_j\}_J\) is an indexed set of summable families \(\fml{x}_j \in X^*\) whose indexing set \(J\) is \emph{finite}, and \(\fml{x} = \uplus_J \fml{x}_j\), then:
    \begin{equation*} \label{eq:fin_flattening}
      \Sigma \{\Sigma \fml{x}_j\}_J \keq x \implies \Sigma \fml{x} \keq x
    \end{equation*}
  \end{description}
\end{definition}

Bracketing and flattening together capture associativity and commutativity. Notice that these are not exactly dual to each other: bracketing assumes that each family \(\fml{x}_j\) is finite, whereas flattening admits infinite families \(\fml{x}_j\) but assumes there is only a finite number of them.
As Proposition~\ref{prop:no_inverses} will establish, the flattening axiom's requirement that \(J\) is finite is necessary for weak \(\Sigma\)-monoids to admit additive inverses.
On the other hand, the bracketing axiom's requirement that each family \(\fml{x}_j\) is finite is necessary so that every Hausdorff commutative monoid is a weak \(\Sigma\)-monoid whose summable families are precisely those that converge according to the topology (see Proposition~\ref{prop:wSm_Hausdorff} below).
The following proposition establishes that \(0 \in X\) acts as the neutral element of the \(\Sigma\)-monoid.

\begin{proposition} \label{prop:neutral_element}
  Let \((X,\Sigma)\) be a weak \(\Sigma\)-monoid with \(\Sigma \emptyset = 0\). Let \(\fml{x} \in X^*\) be a family such that \(\Sigma \fml{x} \keq x\) for some \(x \in X\).
  Let \(\fml{0} \in X^*\) be a (possibly infinite) family whose elements are all \(0\); then \(\Sigma (\fml{x} \uplus \fml{0}) \keq x\).
  Conversely, let \(\fml{x}_\emptyset \subseteq \fml{x}\) be the subfamily of elements other than \(0\); then \(\Sigma \fml{x}_\emptyset \keq x\).
\end{proposition}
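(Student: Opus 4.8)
The plan is to isolate two small facts about the zero element and then read off both halves of the statement, the second being essentially a corollary of the first. \emph{Fact 1: $\Sigma\{x,0\}\keq x$ for every $x\in X$.} I would derive this from \textbf{Bracketing} applied to the two finite summable families $\mathbf{x}_1=\{x\}$ and $\mathbf{x}_2=\emptyset$ indexed by $J=\{1,2\}$: their disjoint union is $\{x\}$, so $\Sigma(\uplus_J\mathbf{x}_j)=\Sigma\{x\}\keq x$ by \textbf{Singleton}, and bracketing then gives $\Sigma\{\Sigma\mathbf{x}_1,\Sigma\mathbf{x}_2\}=\Sigma\{x,0\}\keq x$; note that the Kleene equality here also asserts that $\Sigma\{x,0\}$ is in fact defined.

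\emph{Fact 2: $\Sigma\mathbf{0}\keq 0$ for every family $\mathbf{0}\in X^*$ all of whose entries are $0$.} The empty family is covered by \textbf{Neutral element}. For a nonempty $\mathbf{0}=\{0\}_K$ with $K$ a countable index set, I would apply \textbf{Bracketing} to the indexed set $\{\mathbf{x}_k\}_{k\in K}$ with $\mathbf{x}_k=\emptyset$ for every $k$: each $\mathbf{x}_k$ is finite and summable, $\uplus_{k\in K}\mathbf{x}_k=\emptyset$, and $\Sigma\emptyset\keq 0$, so bracketing yields $\Sigma\{\Sigma\mathbf{x}_k\}_K=\Sigma\{0\}_K=\Sigma\mathbf{0}\keq 0$. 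This is the only step that genuinely uses that the indexing set in \textbf{Bracketing} may be infinite, and I expect it to be the main obstacle: it is the one non-formal move, namely the observation that appending infinitely many (empty, hence summable) families to a finite summable one is forced to produce a summable family.

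\emph{First claim.} Given $\Sigma\mathbf{x}\keq x$ and an all-zero family $\mathbf{0}$, apply \textbf{Flattening} with $J=\{1,2\}$, $\mathbf{y}_1=\mathbf{x}$ and $\mathbf{y}_2=\mathbf{0}$. Both are summable — $\mathbf{y}_1$ by hypothesis, $\mathbf{y}_2$ by Fact 2 — their disjoint union is $\mathbf{x}\uplus\mathbf{0}$, and $\Sigma\{\Sigma\mathbf{y}_1,\Sigma\mathbf{y}_2\}=\Sigma\{x,0\}\keq x$ by Fact 1; flattening then delivers $\Sigma(\mathbf{x}\uplus\mathbf{0})\keq x$.

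\emph{Second claim.} Write $\mathbf{x}=\mathbf{x}_\emptyset\uplus\mathbf{0}$, where $\mathbf{0}$ gathers the entries of $\mathbf{x}$ equal to $0$ and $\mathbf{x}_\emptyset$ the rest. By \textbf{Neutral element}, $\mathbf{x}_\emptyset$ is summable, say $\Sigma\mathbf{x}_\emptyset\keq y$. Feeding $\mathbf{x}_\emptyset$ and $\mathbf{0}$ into the first claim gives $\Sigma(\mathbf{x}_\emptyset\uplus\mathbf{0})\keq y$, i.e.\ $\Sigma\mathbf{x}\keq y$; since also $\Sigma\mathbf{x}\keq x$, we conclude $y=x$, that is, $\Sigma\mathbf{x}_\emptyset\keq x$, which is what was claimed.
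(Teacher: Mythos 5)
Your proposal is correct and follows essentially the same route as the paper: establish $\Sigma\{x,0\}\keq x$ and $\Sigma\fml{0}\keq 0$ via bracketing (the latter by decomposing $\emptyset$ into infinitely many empty subfamilies), then use flattening with $J=\{1,2\}$ for the first claim, and for the converse partition $\fml{x}=\fml{x}_\emptyset\uplus\fml{0}$, invoke the neutral element axiom for summability of $\fml{x}_\emptyset$, and compare the two values of $\Sigma\fml{x}$. No gaps.
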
 \begin{proof}
  Since \(\Sigma \emptyset = 0\), the family \(\fml{0}\) may be equivalently defined as
  \begin{equation*}
    \fml{0} = \{\Sigma \emptyset, \Sigma \emptyset, \ldots\}
  \end{equation*}
  and, due to bracketing, \(\Sigma \emptyset \keq 0 \implies \Sigma \fml{0} \keq 0\).
  Moreover, for any element \(x \in X\), the singleton axiom imposes that \(\Sigma \{x\} \keq x\) and, clearly, \(\{x\} = \{x\} \uplus \emptyset\). Thus, according to bracketing:
  \begin{equation*}
    \Sigma \{x\} \keq x \implies \Sigma \{\Sigma \{x\}, \Sigma \emptyset\} \keq x.
  \end{equation*}
  Using the singleton axiom and the definition of \(0\), the right hand side can be rewritten as \(\Sigma \{x,0\} \keq x\).
  It now follows that \(\Sigma \{\Sigma \fml{x}, \Sigma \fml{0}\} \keq x\), and flattening gives \(\Sigma (\fml{x} \uplus \fml{0}) \keq x\), proving the first part of the claim.

  On the other hand, we may partition any summable family \(\fml{x} \in X^*\) into its subfamily \(\fml{x}_\emptyset\) and its subfamily \(\fml{0}\) so that \(\fml{x} = \fml{x}_\emptyset \uplus \fml{0}\).
  Since the neutral element axiom imposes \(\fml{x}_\emptyset\) is summable, there is some \(x' \in X\) such that \(\Sigma \fml{x}_\emptyset \keq x'\) and, since \(\Sigma \fml{0} \keq 0\) and \(\Sigma \{x',0\} \keq x'\) have been established above, we may apply flattening to obtain \(\Sigma \fml{x} \keq x'\).
  But \(\Sigma \fml{x} \keq x\), so it follows that \(x = x'\) and \(\Sigma \fml{x}_\emptyset \keq x\), as claimed.
\end{proof}

\begin{example} \label{ex:wSm_pm}
  Let \(\pm\) be the set \(\{0,+,-\}\) and, for each family \(\fml{x} \in \pm^*\), let \(n_+(\fml{x})\) (and \(n_-(\fml{x})\)) be the number of occurrences of \(+\) (and \(-\)). Define a partial function \(\Sigma \colon \pm^* \pto \pm\) as follows:
  \begin{equation}
    \Sigma \fml{x} = \begin{cases}
      0 &\ifc n_+(\fml{x}) < \infty \text{ and } n_+(\fml{x}) = n_-(\fml{x}) \\
      + &\ifc n_+(\fml{x}) < \infty \text{ and } n_+(\fml{x}) = n_-(\fml{x})+1 \\
      - &\ifc n_+(\fml{x}) < \infty \text{ and } n_+(\fml{x}) = n_-(\fml{x})-1 \\
      \undefined &\otherwise.
    \end{cases}
  \end{equation}
  Then \((\pm,\Sigma)\) is a weak \(\Sigma\)-monoid.
\end{example}

\begin{example} \label{ex:wSm_ominus}
  Let \(X\) be a set and let \(P = \mathcal{P}(X)\) be its power set. For any \(x \in X\), define \(n_x \colon P^* \to \Nset \cup \{\infty\}\) as follows:
  \begin{equation}
    n_x(\{A_i\}_I) = \abs{\{i \in I \mid x \in A_i\}}.
  \end{equation}
  Define a partial function \(\Sigma \colon P^* \pto P\) as follows:
  \begin{equation}
    \Sigma \{A_i\}_I = \begin{cases}
      \{x \in X \mid n_x(\{A_i\}_I) \text{ odd}\} &\ifc \forall x \in X,\, n_x(\{A_i\}_I) < \infty \\
      \undefined &\otherwise.
    \end{cases}
  \end{equation}
  Then \((P,\Sigma)\) is a weak \(\Sigma\)-monoid. On finite families, \(\Sigma\) corresponds to symmetric difference of sets.
\end{example}

In Section~\ref{sec:topology} we establish that every Hausdorff commutative monoid is a weak \(\Sigma\)-monoid.
Many examples of weak \(\Sigma\)-monoids arise like this, and here is one of the simplest.

\begin{example} \label{ex:wSm_Rset}
  Let \((\Rset,+)\) be the standard Hausdorff commutative monoid of real numbers. According to Proposition~\ref{prop:wSm_Hausdorff}, we may define a partial function \(\Sigma \colon \Rset^* \pto \Rset\) so that \((\Rset,\Sigma)\) is a weak \(\Sigma\)-monoid.
  This \(\Sigma\) corresponds to the usual notion of absolute convergence of series.
\end{example}

Let us introduce a notion of structure-preserving function between weak \(\Sigma\)-monoids.

\begin{definition}
  Let \((X,\Sigma)\) and \((Y,\Sigma')\) be weak \(\Sigma\)-monoids. A function \(f \colon X \to Y\) is a \emph{\(\Sigma\)-homomorphism} if for every family \(\fml{x} \in X^*\) and every \(x \in X\):
  \begin{equation*}
    \Sigma \fml{x} \keq x \implies \Sigma' f\fml{x} \keq f(x).
  \end{equation*}
\end{definition}

Notice that in the weak \(\Sigma\)-monoid \((\pm,\Sigma)\) from Example~\ref{ex:wSm_pm} the family \(\{+,+,-\}\) is summable whereas its subfamily \(\{+,+\}\) is not.
Similarly, we may define a weak \(\Sigma\)-monoid \(([-1,1],\Sigma)\) whose \(\Sigma\) function is the same as in \((\Rset,\Sigma)\) when the result is in the interval \([-1,1]\) and it is undefined otherwise.
Once again, \(\{0.75,0.5,-0.25\}\) is summable whereas its subfamily \(\{0.75,0.5\}\) is not.
This idea of defining a weak \(\Sigma\)-monoid from an existing one by restricting the underlying set is formalised by the following lemma.

\begin{lemma} \label{lem:wSm_restriction}
  Let \((Y,\Sigma)\) be a weak \(\Sigma\)-monoid and let \(X\) be a set. Let \(f \colon X \to Y\) be an injective function such that \(0 \in \im(f)\). Define a partial function \(\Sigma^{f} \colon X^* \pto X\) as follows:
  \begin{equation*}
    \Sigma^f \fml{x} = \begin{cases}
      x &\ifc \exists x \in X \colon \Sigma f\fml{x} \keq f(x) \\
      \undefined &\otherwise.
    \end{cases}
  \end{equation*}
  Then, \((X,\Sigma^f)\) is a weak \(\Sigma\)-monoid and \(f \colon X \to Y\) is a \(\Sigma\)-homomorphism.
\end{lemma}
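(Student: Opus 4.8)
The plan is to reduce everything to a single biconditional. Since $f$ is injective, whenever $\Sigma f\fml{x}$ is defined and lies in $\im(f)$ the witness $x$ with $\Sigma f\fml{x} \keq f(x)$ is unique, so $\Sigma^f$ is a well-defined partial function and, for every $\fml{x} \in X^*$ and $x \in X$,
\[
  \Sigma^f \fml{x} \keq x \quad\Longleftrightarrow\quad \Sigma f\fml{x} \keq f(x).
\]
The forward direction is literally the statement that $f$ is a $\Sigma$-homomorphism, so that half of the lemma is free. For the axioms I would transport the corresponding property of $(Y,\Sigma)$ through this equivalence, using repeatedly that $f$ commutes with the family operations: $f(\fml{x} \uplus \fml{x}') = f\fml{x} \uplus f\fml{x}'$, $f\{x\} = \{f(x)\}$, $f\emptyset = \emptyset$, and that $f\fml{x}$ has the same index set as $\fml{x}$ (in particular, finite or not).

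First, Singleton: $\Sigma\{f(x)\} \keq f(x)$ in $Y$ gives $\Sigma^f\{x\} \keq x$. Neutral element: let $0_X := f^{-1}(\Sigma\emptyset)$, which exists and is unique by the hypothesis $0 \in \im(f)$ and injectivity; then $\Sigma f\emptyset = \Sigma\emptyset = f(0_X)$ gives $\Sigma^f\emptyset \keq 0_X$. If $\Sigma^f\fml{x} \keq x$ then $\Sigma f\fml{x} \keq f(x)$; since $f$ is injective with $f(0_X) = \Sigma\emptyset$, the image under $f$ of the subfamily of $\fml{x}$ consisting of elements $\neq 0_X$ is exactly the subfamily of $f\fml{x}$ consisting of elements $\neq \Sigma\emptyset$, which by Proposition~\ref{prop:neutral_element} (applied in $Y$) sums to $f(x)$; hence $\Sigma^f\fml{x}_\emptyset \keq x$, and in particular $\fml{x}_\emptyset$ is $\Sigma^f$-summable.

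For Bracketing, take finite $\Sigma^f$-summable families $\fml{x}_j$ with $\fml{x} = \uplus_J \fml{x}_j$ and $\Sigma^f\fml{x} \keq x$, and set $y_j := \Sigma^f\fml{x}_j$, so $\Sigma f\fml{x}_j \keq f(y_j)$ with $f\fml{x}_j$ finite and $f\fml{x} = \uplus_J f\fml{x}_j$ summing to $f(x)$. Bracketing in $Y$ yields $\Sigma\{f(y_j)\}_J = \Sigma f\{y_j\}_J \keq f(x)$, i.e.\ $\Sigma^f\{\Sigma^f\fml{x}_j\}_J \keq x$. Flattening is the mirror image of this computation, using the flattening axiom of $Y$ and the finiteness of $J$. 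I do not anticipate a real obstacle: the argument is bookkeeping about how $f$ acts on families. The one place meriting care is the ``elements other than $0$'' clause of the neutral element axiom, where injectivity of $f$ (together with $f(0_X) = \Sigma\emptyset$) is precisely what lets the subfamily taken inside $X$ be identified with the subfamily taken inside $Y$.
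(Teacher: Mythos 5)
Your proof is correct and follows essentially the same route as the paper's: the biconditional $\Sigma^f \fml{x} \keq x \iff \Sigma f\fml{x} \keq f(x)$ and the transport of each axiom through it, with the bracketing chain matching the paper's almost line for line. If anything you are more careful than the paper on the neutral element axiom, where the paper dismisses it as trivial but your observation --- that one needs Proposition~\ref{prop:neutral_element} in $Y$ to know the pruned subfamily sums to something in $\im(f)$, and injectivity to identify $(f\fml{x})_\emptyset$ with $f(\fml{x}_\emptyset)$ --- is exactly the detail that makes it work.
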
 \begin{proof}
  The definition of \(\Sigma^f\) is unambiguous thanks to the requirement that \(f\) is injective.
  Neutral element and singleton axioms in \((X,\Sigma^f)\) are trivially derived from those in \((Y,\Sigma)\).
  Let \(\{\fml{x}_j\}_J\) be an indexed set where each \(\fml{x}_j \in X^*\) is a summable family in \((X,\Sigma^f)\) and let \(\fml{x} = \uplus_J \fml{x}_j\), evidently, \(f\fml{x} = \uplus_J f\fml{x}_j\).
  Assume that each family \(\fml{x}_j\) is finite, then the following sequence of implications proves bracketing:
  \begin{align*}
    \Sigma^f \fml{x} \keq x &\iff \Sigma f\fml{x} \keq f(x) &&\text{(definition of \(\Sigma^f\))} \\
        &\implies \Sigma \{\Sigma f\fml{x}_j\}_J \keq f(x) &&\text{(bracketing in \(Y\))} \\
        &\iff \Sigma \{f(\Sigma^f \fml{x}_j)\}_J \keq f(x) &&\text{(\(\fml{x}_j\) summable and def. of \(\Sigma^f\))} \\
        &\iff \Sigma^f \{\Sigma^f \fml{x}_j\}_J \keq x &&\text{(definition of \(\Sigma^f\))}.
  \end{align*}
  Finally, flattening in \((X,\Sigma^f)\) can be proven following a similar argument, this time using flattening in \((Y,\Sigma)\).
  Thus, we have shown that \((X,\Sigma^f)\) is a weak \(\Sigma\)-monoid.
  The fact that \(f \colon X \to Y\) is a \(\Sigma\)-homomorphism is immediate from the definition of \(\Sigma^f\).
\end{proof}

The remainder of this section will study the category of weak \(\Sigma\)-monoids and certain important full subcategories of it.

\begin{definition}
  Let \(\SCat{w}\) be the category whose objects are weak \(\Sigma\)-monoids and whose morphisms are \(\Sigma\)-homomorphisms.
\end{definition}

\subsection{Important full subcategories of \(\SCat{w}\)}
\label{sec:SCat*}

We now introduce subclasses of \(\Sigma\)-monoids whose axioms require more families to be summable.

\begin{definition} \label{def:sSm}
  A \emph{strong \(\Sigma\)-monoid} is a weak \(\Sigma\)-monoid \((X,\Sigma)\) that satisfies the following extra axioms.
  \begin{description}
    \item[{Subsummability.}] Every subfamily \(\fml{x'} \subseteq \fml{x}\) of a summable family \(\fml{x} \in X^*\) is summable.
    \item[{Strong bracketing.}] If \(\{\fml{x}_j\}_J\) is an indexed set where each \(\fml{x}_j \in X^*\) is a summable family, and \(\fml{x} = \uplus_J \fml{x}_j\), then:
    \begin{equation*}
       \Sigma \fml{x} \keq x \implies \Sigma \{\Sigma \fml{x}_j\}_J \keq x
    \end{equation*}
    \item[{Strong flattening.}] If \(\{\fml{x}_j\}_J\) is an indexed set where each \(\fml{x}_j \in X^*\) is a summable family, and \(\fml{x} = \uplus_J \fml{x}_j\), then:
    \begin{equation*}
      \Sigma \{\Sigma \fml{x}_j\}_J \keq x \implies \Sigma \fml{x} \keq x
    \end{equation*}
  \end{description}
\end{definition}

Notice how the finiteness preconditions required in the standard bracketing and flattening axioms are removed, thus making the axioms stronger.
It is straightforward to check that this definition is equivalent to that of Haghverdi~\cite{Haghverdi}.
In Haghverdi's definition, strong bracketing, strong flattening and subsummability are condensed into a single axiom, and the neutral element axiom of weak \(\Sigma\)-monoids is omitted since it follows immediately from subsummability.
Strong $\Sigma$-monoids do not admit additive inverses, as pointed out in the original reference~\cite{Haghverdi}. 
We reproduce this result here, illustrating that it is a consequence of strong flattening.

\begin{proposition}[Haghverdi~\cite{Haghverdi}] \label{prop:no_inverses}
Let \((X,\Sigma)\) be a strong \(\Sigma\)-monoid.
If a family \(\fml{x} \in X^*\) satisfies \(\Sigma \fml{x} \keq 0\), then every element \(a \in \fml{x}\) is the neutral element \(a = 0\).
\end{proposition}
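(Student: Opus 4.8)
The plan is to exploit strong flattening to "absorb" infinitely many copies of any element of $\fml{x}$, and then use the fact that every subfamily of a summable family is summable to derive a contradiction unless that element is $0$. Concretely, suppose $\Sigma\fml{x} \keq 0$ and let $a \in \fml{x}$ be one of its elements; I want to show $a = 0$.

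First I would split $\fml{x}$ as $\{a\} \uplus \fml{x'}$, where $\fml{x'}$ is the family obtained by removing that one occurrence of $a$. By strong bracketing (or just bracketing) applied to this finite partition, $\Sigma\{a, \Sigma\fml{x'}\} \keq 0$, so in particular $\fml{x'}$ is summable; write $\Sigma\fml{x'} \keq b$, so that $\Sigma\{a,b\} \keq 0$, i.e. $a$ and $b$ are additive inverses. Now consider the family $\fml{z} = \{a, b, a, b, a, b, \ldots\}$ consisting of countably many copies of $a$ and $b$, which we may view as $\uplus_{n \in \Nset} \{a,b\}$. Since each $\Sigma\{a,b\} \keq 0$, we have $\Sigma\{\Sigma\{a,b\}\}_{n\in\Nset} \keq \Sigma\fml{0} \keq 0$ (using Proposition~\ref{prop:neutral_element}, or directly bracketing on $\Sigma\emptyset = 0$), and then strong flattening gives $\Sigma\fml{z} \keq 0$. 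The key point is that strong flattening has no finiteness restriction on the indexing set $J = \Nset$, so this step is legitimate precisely for strong $\Sigma$-monoids.

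Next I would regroup $\fml{z}$ the other way: pair each $b$ with the following $a$, so $\fml{z} = \{a\} \uplus \{b,a\} \uplus \{b,a\} \uplus \cdots$, i.e. one leftover copy of $a$ together with countably many copies of $\{b,a\}$. Since $\Sigma\{b,a\} \keq 0$ by commutativity (flattening/bracketing handle the reordering), strong bracketing applied to $\Sigma\fml{z}\keq 0$ yields $\Sigma\big(\{a\}\uplus\{0,0,\ldots\}\big) \keq 0$, and by the neutral element behaviour (Proposition~\ref{prop:neutral_element}) this forces $\Sigma\{a\}\keq 0$, hence $a = 0$ by the singleton axiom. Alternatively, and perhaps more cleanly: subsummability applied to the summable family $\fml{z}$ shows that the subfamily consisting of all the $a$'s, namely $\fml{a} = \{a,a,a,\ldots\}$, is itself summable; but by the same argument the subfamily $\{a, a, \ldots\}$ together with one $b$ regroups (pairing) to show $\Sigma(\{b\}\uplus\fml{0}) \keq \Sigma\{a\}\uplus\ldots$; one has to be slightly careful about which regrouping is finite.

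The main obstacle I anticipate is bookkeeping: the regroupings of $\fml{z}$ into "$a$ then blocks $\{b,a\}$" versus "blocks $\{a,b\}$" are both partitions into finite pieces indexed by an infinite set, so only strong bracketing (not ordinary bracketing) applies, and I need to be careful that each application genuinely lands in the hypotheses of the axiom as stated — in particular that the "shifted" pairing is a legitimate disjoint-union decomposition of the same family up to the family equivalence of Definition~\ref{def:families}. Once the two groupings are set up correctly, the contradiction "$a = 0$" drops out immediately from the singleton and neutral-element axioms. I expect the cleanest writeup to use strong bracketing for the shift and only the singleton plus neutral-element axioms at the end, avoiding any appeal to subsummability.
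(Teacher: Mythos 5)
Your argument is correct: it is the same Eilenberg--swindle mechanism as the paper's proof, but run on a different family. The paper takes $\fml{z} = \fml{x} \uplus \fml{x} \uplus \cdots$ and $\fml{a} = \{a\} \uplus \fml{z}$, observes that these are literally the same family (countably many copies of each element of $\fml{x}$), and computes both sums using only strong flattening, the neutral-element axiom and the singleton axiom; this is deliberate, since the surrounding text advertises the proposition as ``a consequence of strong flattening.'' You instead first extract an additive inverse $b \keq \Sigma\fml{x'}$ of $a$ and then swindle with the family $\{a,b,a,b,\ldots\}$ under its two groupings $\uplus_{n}\{a,b\}$ and $\{a\}\uplus\{b,a\}\uplus\{b,a\}\uplus\cdots$. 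That works, but it costs you two extra axioms: subsummability (to know $\fml{x'}$ is summable) and strong bracketing (for both the extraction step and the regrouping step). One local imprecision: you write that strong bracketing applied to $\fml{x} = \{a\}\uplus\fml{x'}$ shows ``in particular $\fml{x'}$ is summable,'' but summability of each block is a \emph{hypothesis} of (strong) bracketing, not a conclusion --- you must invoke subsummability first to get it; and the parenthetical ``(or just bracketing)'' is wrong when $\fml{x'}$ is infinite, since ordinary bracketing requires each block to be finite. With that ordering fixed your proof is complete; the paper's version is worth knowing because it avoids the preliminary reduction entirely and isolates strong flattening as the axiom responsible for excluding inverses.
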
 \begin{proof}
  Let \(\fml{x}\) be a family such that \(\Sigma \fml{x} \keq 0\) and choose an arbitrary element \(a \in \fml{x}\). Define the following infinite families:
  \begin{equation*}
    \begin{aligned}
      \fml{z} &= \fml{x} \uplus \fml{x} \uplus \dots \\
      \fml{a} &= \{a\} \uplus \fml{z}
    \end{aligned}
  \end{equation*}
  Due to strong flattening, neutral element and singleton axioms:
  \begin{equation*}
    \begin{aligned}
      \Sigma \fml{z} &\keq \Sigma \{\Sigma \fml{x}, \Sigma \fml{x}, \dots \} \keq \Sigma \{0,0,\dots\} \keq 0\\
      \Sigma \fml{a} &\keq \Sigma \{\Sigma \{a\}, \Sigma \fml{z}\} \keq \Sigma \{a,0\} \keq a
    \end{aligned}
  \end{equation*}
  However, both families \(\fml{z}\) and \(\fml{a}\) contain the same elements: a countably infinite number of copies of each element in \(\fml{x}\). Being the same family, it is immediate that \(\Sigma \fml{a} \keq \Sigma \fml{z}\) which implies \(a = 0\).
\end{proof}

Since weak \(\Sigma\)-monoids do admit inverses, it is relevant to consider the notion of \(\Sigma\)-groups. It is immediate that the axioms of \(\Sigma\)-groups defined here are equivalent to those proposed by Higgs~\cite{Higgs}.

\begin{definition}
  A \emph{finitely total \(\Sigma\)-monoid} is a weak \(\Sigma\)-monoid where every finite family is summable.
  A \emph{\(\Sigma\)-group} is a finitely total \(\Sigma\)-monoid \((X,\Sigma)\) where, for every \(x \in X\), there is an element \(-x \in X\) satisfying \(\Sigma \{x,-x\} = 0\) and where the function that maps each \(x \in X\) to \(-x\) is a \(\Sigma\)-homomorphism.
\end{definition}

\begin{remark} \label{rmk:SCat_inverse}
  Notice that every finitely total \(\Sigma\)-monoid \((X,\Sigma)\) is trivially a commutative monoid \((X,+,\Sigma \emptyset)\) where \(x + y\) is defined to be \(\Sigma \{x,y\}\) for every \(x, y \in X\).
  Similarly, every \(\Sigma\)-group is trivially an abelian group and, hence, the inverse of an element in a \(\Sigma\)-group is unique.
  Moreover, thanks to the inversion map being a \(\Sigma\)-homomorphism, any summable family \(\fml{x} \in X^*\) (even infinite ones) have an `inverse' \(\fml{-x}\) obtained by applying the inversion map to each of its elements. Indeed, thanks to flattening:
\begin{equation*}
  \Sigma (\fml{x} \uplus \fml{-x}) \keq \Sigma \{\Sigma \fml{x}, \Sigma \fml{-x}\} \keq \Sigma \{\Sigma \fml{x}, -(\Sigma \fml{x})\} \keq 0.
\end{equation*}
\end{remark}

Each of these flavours of \(\Sigma\)-monoids induces a full subcategory of \(\SCat{w}\).

\begin{definition} \label{def:SCat_subcats}
  Let \(\SCat{s}\), \(\SCat{ft}\) and \(\SCat{g}\) be the full subcategories of \(\SCat{w}\) obtained by restricting their class of objects to strong \(\Sigma\)-monoids, finitely total \(\Sigma\)-monoids, and \(\Sigma\)-groups, respectively.
  For brevity, we refer to these subcategories, along with \(\SCat{w}\) itself, collectively as \(\SCat{*}\).
\end{definition}

Since strong \(\Sigma\)-monoids do not admit inverses (Proposition~\ref{prop:no_inverses}) it is clear that \(\Sigma\)-groups and strong \(\Sigma\)-monoids are disjoint subclasses of weak \(\Sigma\)-monoids.
The hierarchy of inclusions between \(\SCat{*}\) categories can be summarised by the existence of the following full and faithful embedding functors:
\begin{equation} \label{eq:SCat_inclusions}
  \SCat{s} \into \SCat{w} \quad\quad\quad\quad \SCat{g} \into \SCat{ft} \into \SCat{w}.
\end{equation}
We refer to these as the embedding functors between $\SCat{*}$ categories.

\begin{proposition} \label{prop:SCat*_locally_presentable}
  All of the \(\SCat{*}\) categories are locally presentable.
  Consequently, they are all complete and cocomplete.
\end{proposition}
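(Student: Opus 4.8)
The plan is to exhibit each of the four categories in $\SCat{*}$ as the category of models of a limit sketch (equivalently, as a locally presentable category in the sense of Gabriel--Ulmer), from which local presentability, completeness and cocompleteness follow by the standard theory (see Adámek--Rosický, \emph{Locally Presentable and Accessible Categories}). Since a weak $\Sigma$-monoid is a set $X$ together with a partial function $\Sigma\colon X^*\pto X$, and since $X^*$ is itself built from $X$ by a small colimit-like construction (a quotient of a subobject of $2^{\mathcal I\times X}$, by Proposition~\ref{prop:family_set}), the first and most delicate task is to present $X\mapsto X^*$, together with the partiality of $\Sigma$, inside a sketch. Concretely, I would model the domain of $\Sigma$ as a \emph{separate} sort $D$ equipped with a mono $D\into X^*$, and model $\Sigma$ as a total function $D\to X$; partiality is thereby replaced by a total map out of a distinguished subobject, which is expressible by a limit condition. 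The operation $\fml x\mapsto X^*$ is finitary in a suitable sense once one observes that each family has countable index set, so $X^*$ is an $\omega_1$-accessible (indeed $\omega_1$-ary algebraic) construction in $X$; this forces us to work with $\lambda$-presentable objects for $\lambda=\omega_1$ rather than finitely presentable ones, but that is harmless — $\lambda$-accessible-and-cocomplete is still locally presentable.

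Next I would check that each defining axiom is of the allowed form. The singleton and neutral-element axioms are equational/implicational statements about the total maps and the subobject $D$, hence sketchable. The bracketing and flattening axioms have the shape ``if $\fml x$ (and each $\fml x_j$) lies in $D$ then a certain derived family lies in $D$ and $\Sigma$ agrees on both'': these are implications between ``membership in a subobject'' predicates, which are exactly what limit sketches with a chosen class of cones express (an implication $P\Rightarrow Q$ between subobjects becomes a factorization requirement, realizable as a limit cone). The finiteness side-conditions (``$\fml x_j$ finite'', ``$J$ finite'') are the reason the plain bracketing/flattening axioms are only $\omega_1$-ary rather than finitary: a finite family is an object of the sub-presheaf $X^{<\omega}\subseteq X^*$, which is a directed $\omega_1$-small colimit of finite powers of $X$, again sketchable at level $\omega_1$. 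For the subcategories: $\SCat{ft}$ adds the axiom ``every finite family is summable,'' i.e.\ the mono $X^{<\omega}\into X^*$ factors through $D\into X^*$ — a single limit condition; $\SCat{g}$ further adds a total unary operation $x\mapsto -x$ with the equation $\Sigma\{x,-x\}=0$ and the requirement that $-(-)$ be a $\Sigma$-homomorphism, all equational; $\SCat{s}$ adds subsummability (``$D$ is downward closed in the subfamily order,'' a factorization/limit condition) together with strong bracketing and strong flattening, which are the finiteness-free versions of the axioms already handled. In every case the embeddings of \eqref{eq:SCat_inclusions} correspond to adding cones/operations to the sketch, consistent with their being full.

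The main obstacle I anticipate is not any single axiom but the bookkeeping needed to present the endofunctor $(-)^*$ on $\Set$ honestly as part of a limit sketch: one must pin down a small diagram whose models over a set $X$ reconstruct $X^*$ (using the fixed infinite index set $\mathcal I$ and Definition~\ref{def:families}), and verify that the ``same family'' quotient and the subfamily partial order are captured by the chosen cones. A clean way to finesse this is to avoid reconstructing $X^*$ literally and instead present $\SCat{w}$ as a $\lambda$-accessible category by a direct argument: show that $\SCat{w}$ has all small limits (computed as in $\Set$ on the carrier, with $D$ the appropriate limit of domains — one checks the axioms are preserved), show it has filtered colimits along, say, $\omega_1$-filtered diagrams (again computed on carriers, with summability of a family witnessed at some stage since each family is countable), and exhibit a small dense generator consisting of the $\Sigma$-monoids freely generated by countable sets — whose existence can be gotten from the general adjoint-functor / sketch machinery, or bootstrapped from Lemma~\ref{lem:wSm_restriction} and the free-construction ideas used elsewhere in the paper. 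Once $\SCat{w}$ is known to be locally $\omega_1$-presentable, each of $\SCat{s},\SCat{ft},\SCat{g}$ is a full subcategory closed under limits and $\omega_1$-filtered colimits (the extra axioms are each preserved by these, by the same carrier-level computations), hence is itself locally presentable by the standard reflectivity theorem for such subcategories. Completeness and cocompleteness are then automatic.
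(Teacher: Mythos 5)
Your overall strategy --- realise each $\SCat{*}$ category as the category of models of an $\aleph_1$-ary limit theory and invoke the standard Gabriel--Ulmer / Ad\'amek--Rosick\'y machinery --- is exactly the strategy of the paper (Appendix~\ref{sec:locally_presentable} builds an $\aleph_1$-ary essentially algebraic theory and cites \cite[Theorem~3.36]{Adamek_Rosicky}). But neither of the two concrete routes you sketch is completed, and the obstacle you flag yourself is the one that matters. A pure limit sketch cannot contain a sort whose interpretation is forced to be $X^*$: families are multisets, i.e.\ equivalence classes of tuples under index bijections, and that quotient is a colimit, not a limit. So the picture ``a sort $D$ with a mono $D \into X^*$ and a total map $D \to X$'' does not literally live in a limit sketch. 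The fix --- which is what the paper's appendix carries out --- is to dispense with $X^*$ altogether: introduce one partial $n$-ary operation $\sigma_n$ for each ordinal $n \leq \omega$ (partiality handled in the essentially algebraic format by a total ``domain predicate'' $s_n$ with $\Def(\sigma_n)$ given by $s_n(\ldots)=0$), impose permutation-invariance of $s_n$ and $\sigma_n$ as equations so the quotient never has to be formed, and encode each implicational axiom (bracketing, flattening, subsummability) by auxiliary partial symbols whose $\Def$-sets consist precisely of the hypotheses of that implication. You gesture at all of these ingredients but never assemble them; in particular you do not say how ``if these families are summable then so is that one'' becomes a limit condition, and the auxiliary-partial-operation trick is the missing idea.

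Your fallback route has a separate gap: to conclude local $\aleph_1$-presentability from ``complete, with $\aleph_1$-filtered colimits'' you must exhibit a small dense generator of $\aleph_1$-presentable objects, and you only assert that free weak $\Sigma$-monoids on countable sets exist and do the job. No free--forgetful adjunction between $\Set$ and $\SCat{w}$ is constructed in the paper or in your proposal, and obtaining one from the adjoint functor theorem would itself require a solution-set argument you have not given; Lemma~\ref{lem:wSm_restriction} produces restrictions of existing $\Sigma$-monoids, not free objects, and the free constructions in the paper's appendices presuppose the very local presentability being proved here. So as written the second route is circular. Either route can be repaired, but the repair is in each case the substantive content of the proof rather than bookkeeping.
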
 \begin{proof}
    Appendix~\ref{sec:locally_presentable} shows that each of the \(\SCat{*}\) categories is equivalent to the category of models of some \(\aleph_1\)-ary essentially algebraic theory, and hence locally \(\aleph_1\)-presentable~\cite[Theorem~3.36]{Adamek_Rosicky}.
    Locally presentable categories are complete~\cite[Corollary 1.28]{Adamek_Rosicky} and cocomplete by definition.
\end{proof}

\begin{proposition} \label{prop:preserve_limits_chain_colimits}
  All of the embedding functors between $\SCat{*}$ categories preserve limits and \(\aleph_1\)-directed colimits.
\end{proposition}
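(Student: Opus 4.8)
The plan is to prove the statement separately for the two kinds of embedding (into $\SCat{w}$ from $\SCat{s}$ and $\SCat{ft}$, and into $\SCat{ft}$ from $\SCat{g}$), but in each case exploiting the fact that all the categories involved are \emph{full} subcategories of $\SCat{w}$ and, by Proposition~\ref{prop:SCat*_locally_presentable}, are locally $\aleph_1$-presentable. The general principle I would invoke is this: if $\cat{C}$ is a full subcategory of $\cat{D}$ and the inclusion has a left adjoint (a reflection), then the inclusion automatically preserves all limits; and if moreover $\cat{C}$ is closed in $\cat{D}$ under $\aleph_1$-directed colimits computed in $\cat{D}$, then the inclusion preserves $\aleph_1$-directed colimits as well. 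So the task reduces to two concrete verifications for each embedding: (i) that limits in the subcategory are computed as in $\SCat{w}$ (equivalently, that the subcategory is reflective, or directly that a limit cone in $\SCat{w}$ whose vertices lie in the subcategory has its apex in the subcategory too), and (ii) that an $\aleph_1$-directed colimit in $\SCat{w}$ of objects in the subcategory again lies in the subcategory.

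First I would pin down how limits and $\aleph_1$-directed colimits are computed in $\SCat{w}$ itself. From the essentially-algebraic presentation used in Appendix~\ref{sec:locally_presentable}, the forgetful functor $\SCat{w} \to \Set$ creates limits and $\aleph_1$-directed colimits, so in both cases the underlying set is the corresponding (co)limit in $\Set$ and the partial operation $\Sigma$ is determined: a family $\fml{x}$ in a limit $\lim X_i$ is summable with sum $x$ iff its projection to each $X_i$ is summable with sum $x_i$; and for an $\aleph_1$-directed colimit, a family supported on countably many indices becomes summable once all of its (countably many) elements, and the prospective sum, are identified with elements coming from a single $X_i$ in the diagram in which that family is already summable — here the $\aleph_1$-directedness is exactly what lets us absorb a countable family into one stage. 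I would state these two facts as a preliminary lemma, since the rest of the argument is just checking that the extra axioms (subsummability, strong bracketing/flattening; total summability of finite families; existence of inverses with continuous — i.e.\ $\Sigma$-homomorphic — negation) survive these constructions.

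Next, the axiom-chasing. For $\SCat{s} \into \SCat{w}$: in a limit, subsummability holds because a subfamily of a family that is summable in every coordinate is summable in every coordinate; strong bracketing and strong flattening likewise hold coordinatewise. For $\aleph_1$-directed colimits, any family and the finitely-or-countably-many auxiliary families witnessing an instance of an axiom involve only countably many elements in total, so all of them can be pushed into a single stage $X_i$ which is a strong $\Sigma$-monoid, where the axiom holds, and then transported along the colimit cocone. The same template handles $\SCat{ft} \into \SCat{w}$ (a finite family has summable image in each coordinate of a limit, hence is summable; and a finite family in a colimit lives in some stage) and $\SCat{g} \into \SCat{ft}$ (the negation map is defined coordinatewise in a limit and is a $\Sigma$-homomorphism because it is so in each coordinate; in a colimit, $-x$ is obtained by choosing a representative of $x$ in some stage $X_i$, negating there, and taking its image, with well-definedness following from directedness). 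Concretely, I would phrase each of these as: the subcategory is closed under the relevant construction in the ambient category, hence the embedding preserves it.

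The main obstacle I expect is not conceptual but bookkeeping: making precise the claim that an instance of one of the axioms in an $\aleph_1$-directed colimit can be "lifted to a single stage". One has to check that the countably many elements involved, together with the witnessing sums, all come from a common $X_i$ \emph{and} that the equation witnessing summability (or the identity $\Sigma\{x,-x\} = 0$, or $\Sigma$ of a finite family being defined) actually holds already in that $X_i$ rather than merely in the colimit — this is where one uses that in an $\aleph_1$-directed colimit two elements of some $X_i$ that become equal in the colimit are already equal in some later $X_{i'}$, and that "$\Sigma\fml{x}$ is defined with value $x$'' is a countably-indexed positive condition that therefore reflects back to a stage. Once that reflection-of-countable-data principle is stated carefully (it follows directly from the explicit form of $\aleph_1$-directed colimits in the essentially algebraic presentation), every axiom check becomes routine. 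An alternative, cleaner route for the limit half would be to establish that each embedding is reflective — e.g.\ the reflection of a weak $\Sigma$-monoid into $\SCat{ft}$ freely adjoins sums of finite families, that into $\SCat{s}$ freely adjoins the missing subfamily sums, and that of $\SCat{ft}$ into $\SCat{g}$ is a group completion — and then invoke the general fact that right adjoints preserve limits; I would likely include whichever of the two arguments is shorter, but the axiom-chasing version is self-contained given the preliminary lemma and so is my primary plan.
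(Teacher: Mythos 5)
Your primary plan coincides with the paper's proof: both explicitly describe limits in \(\SCat{w}\) (a family in the limit is summable with sum \(x\) iff it is summable coordinatewise) and \(\aleph_1\)-directed colimits (a countable family is summable iff it is summable at some single stage, which exists by \(\aleph_1\)-directedness), and then check that each subcategory is closed under these constructions, so that fullness gives preservation. Two small cautions: the paper explicitly notes that the forgetful functor \(\SCat{w} \to \Set\) does \emph{not} strictly create limits (a smaller domain of definition for \(\Sigma\) on the limit set still makes all projections \(\Sigma\)-homomorphisms), although the coordinatewise \(\Sigma\) you describe is indeed the correct limit; and your alternative route via reflectivity would be circular in this paper's development, since the left adjoints of Theorem~\ref{thm:embedding_right_adjoint} are obtained as a \emph{consequence} of the present proposition.
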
 \begin{proof}
    The limit of a diagram $D \colon I \to \SCat{w}$ has as its underlying set the limit of $U \circ D$ where $U$ is the forgetful functor $\SCat{w} \to \Set$. However, the forgetful functor $U$ does not create limits, since there are multiple choices of $\Sigma$ functions that make the limit set a cone in $\SCat{w}$.
    Instead, we explicitly provide the $\Sigma$ function for the limit in $\SCat{w}$ and observe that, if all objects in the diagram are in some category $\SCat{*}$, then the limit is also in the same $\SCat{*}$ category. This, along with $\SCat{w}$ being complete, immediately implies that every embedding functor between $\SCat{*}$ categories preserves limits.
    A similar argument follows for directed colimits, where it will be enough to prove the result for colimits of chains, since these are preserved by a functor exactly when directed colimits are preserved~\cite[Corollary 1.7]{Adamek_Rosicky}.
    
    \textit{Equalisers.} Let \(X\) and \(Y\) be weak \(\Sigma\)-monoids and let \(f,g \colon X \to Y\) be \(\Sigma\)-homomorphisms.
    The equaliser set is \(E = \{x \in X \mid f(x) = g(x)\}\), where the inclusion \(e \colon E \to X\) is injective and \(0 \in E\) because \(f(\Sigma \emptyset) = g(\Sigma \emptyset)\).
    Then, according to Lemma~\ref{lem:wSm_restriction}, \(E\) can be endowed with a weak \(\Sigma\)-monoid structure:
    \begin{equation} \label{eq:SCat_equaliser}
    \Sigma^e \fml{x} = \begin{cases}
      x &\ifc \exists x \in E \colon \Sigma e\fml{x} \keq e(x) \\
      \undefined &\text{otherwise}
    \end{cases}
    \end{equation}
    so that \(e \colon E \to X\) is a \(\Sigma\)-homomorphism.
    To prove that \((E,\Sigma^e)\) is an equaliser in \(\SCat{w}\) it only remains to show that, for any other cone $A$, the unique function \(m \colon A \to E\) is a \(\Sigma\)-homomorphism. But this is straightforward since $\Sigma^e$ is, by construction, the partial function with the largest domain of definition for which $e$ is a $\Sigma$-homomorphism. In more detail: whenever a family $\fml{a} \in A^*$ is summable, this property of $\Sigma^e$ will ensure that $m \fml{a}$ is summable in $E$, and the results will match by virtue of the diagram commuting in $\Set$ and every morphism involved being a $\Sigma$-homomorphism.
    Since $E$ inherits its $\Sigma$ function from $X$, it is straightforward to see that $E$ belongs to the same $\SCat{*}$ category as $X$.
    Consequently, the embedding functors between $\SCat{*}$ categories preserve equalisers.

    \textit{Products.} Let \((X,\Sigma^X)\) and \((Y,\Sigma^Y)\) be objects in \(\SCat{w}\).
    Let \(X \times Y\) be the Cartesian product of sets and \(\pi_X\colon X \times Y \to X\) and \(\pi_Y\colon X \times Y \to Y\) its projections. Define a partial function \(\Sigma^\times \colon (X \times Y)^* \pto X \times Y\) as follows:
    \begin{equation} \label{eq:SCat_product}
    \Sigma^\times \fml{p} = \begin{cases}
      (\Sigma^X \pi_X\fml{p}, \Sigma^Y \pi_Y\fml{p}) &\ifc \pi_X\fml{p} \text{ and } \pi_Y\fml{p} \text{ are summable} \\
      \undefined &\otherwise.
    \end{cases}
    \end{equation}
    Then \((X \times Y,\Sigma^\times)\) is a weak \(\Sigma\)-monoid; the proof is straightforward.
    It is immediate that projections are \(\Sigma\)-homomorphisms, so this is a cone.
    To prove that \((X \times Y,\Sigma^\times)\) is a product in \(\SCat{w}\) it only remains to show that the unique function \(m \colon A \to X \times Y\) for any cone $A$ is a \(\Sigma\)-homomorphism, which follows from observing that its components $\pi_X \circ m$ and $\pi_Y \circ m$ must be $\Sigma$-homomorphisms for $A$ to be a cone.
    Notice that if both \((X,\Sigma^X)\) and \((Y,\Sigma^Y)\) belong to a common $\SCat{*}$ category, then so does \((X \times Y,\Sigma^\times)\).
    Consequently, the embedding functors between $\SCat{*}$ categories preserve products, and it is straightforward to generalise the above argument to show that these functors preserve all small products.
    Together with the previous result on equalisers, this implies that the embedding functors between $\SCat{*}$ categories preserve limits.

    \textit{\(\aleph_1\)-directed colimits.} Let 
    \[X_0 \to X_1 \to X_2 \to \ldots\]
    be a \(\aleph_1\)-directed chain of $\Sigma$-homomorphisms in $\SCat{w}$, and denote the morphism $X_i \to X_j$ obtained by composition as $f_{ij}$, with $f_{ii} = \id_{X_i}$. The colimit of such a chain in $\Set$ is the quotient of the disjoint union of all sets in the chain $(\uplus_i X_i) /\!\sim$ where, for $x \in X_i$ and $x' \in X_j$, we let $x \sim x'$ iff there is some $X_k$ in the chain in which $f_{ik}(x) = f_{jk}(x')$. The morphisms $X_i \to (\uplus_j X_j) /\!\sim$ of the colimit in $\Set$ are the functions mapping each $x \in X_i$ to its equivalence class. 

    Let $[\fml{x}]$ be a family of elements of $(\uplus_i X_i) /\!\sim$ and choose representatives for all equivalence classes in the family such that $\fml{x} \in X_j^*$ for some $j$.
    Then, define the partial function $\Sigma$ on $(\uplus_i X_i) /\!\sim$ as follows:
    \begin{equation} \label{eq:SCat_chain}
    \Sigma [\fml{x}] = \begin{cases}
      [x] &\ifc \exists k \; \exists x \in X_k \colon \Sigma^k f_{jk} \fml{x} \keq x \\
      \undefined &\otherwise
    \end{cases}
    \end{equation}
    where $X_k$ is an object in the chain and $\Sigma^k \colon X_k^* \pto X_k$ is its corresponding partial function.
    Notice that as soon as there is a $\Sigma$-monoid $X_k$ where $f_{jk} \fml{x}$ is summable, the family $f_{jk'} \fml{x}$ will be summable for all $k' > k$, thanks to the morphisms in the chain being $\Sigma$-homomorphisms.
    The definition of $\Sigma$ in~\eqref{eq:SCat_chain} is unambiguous since the equivalence relation $\sim$ is defined in terms of equality in the image of $\Sigma$-homomorphisms $f_{ij}$ and, as such, the particular $\Sigma_k$ that is used does not alter the equivalence class we output.
 
    Since the chain is $\aleph_1$-directed and each family $[\fml{x}]$ is countable by definition, there is an upper bound $X_k$ where we can find representatives such that $\fml{x} \in X_k^*$.
    Then, we know the partial function $\Sigma$ defined in~\eqref{eq:SCat_chain} will satisfy the same axioms on $[\fml{x}]$ as $\fml{x}$ does in the $\Sigma$-monoid $X_k$.
    Consequently, if all of the objects in the chain belong to the same $\SCat{*}$ category, then so does $(\uplus_i X_i) /\!\sim$.

    Each of the functions $X_i \to (\uplus_j X_j) /\!\sim$ are trivially $\Sigma$-homomorphisms, so $(\uplus_j X_j) /\!\sim$ is a cocone.
    It only remains to show that for any other cocone $A$, the unique function $m \colon (\uplus_i X_i) /\!\sim\ \to A$ is a $\Sigma$-homomorphism.
    For any summable family $[\fml{x}]$ there is, by definition, an $X_k$ where the corresponding $\fml{x}$ is summable and, since the cocone morphism $X_k \to A$ is a $\Sigma$-homomorphism, it is guaranteed that $m[\fml{x}]$ is summable.

    Thus, $(\uplus_i X_i) /\!\sim$ is a colimit of the \(\aleph_1\)-directed chain and, due to it being constructed in the same way for all of $\SCat{*}$ categories, it is preserved by the embedding functors.
    Together with Corollary 1.7 from~\cite{Adamek_Rosicky}, this implies that the embedding functors between $\SCat{*}$ categories preserve all \(\aleph_1\)-directed colimits.        
\end{proof}

The following theorem establishes that \(\SCat{s}\), \(\SCat{ft}\) and \(\SCat{g}\) are reflective subcategories of \(\SCat{w}\).

\begin{theorem} \label{thm:embedding_right_adjoint}
  Each of the embedding functors between $\SCat{*}$ categories is a right adjoint.
\end{theorem}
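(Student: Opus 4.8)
The plan is to deduce the statement from the adjoint functor theorem for locally presentable categories, with the two preceding propositions as the only inputs. Proposition~\ref{prop:SCat*_locally_presentable} tells us that each of \(\SCat{w}\), \(\SCat{s}\), \(\SCat{ft}\), \(\SCat{g}\) is locally \(\aleph_1\)-presentable; in particular each is locally presentable and \(\aleph_1\)-accessible. Proposition~\ref{prop:preserve_limits_chain_colimits} tells us that each embedding functor between \(\SCat{*}\) categories preserves all limits and all \(\aleph_1\)-directed colimits. Since the domain and codomain of each such embedding are \(\aleph_1\)-accessible and \(\aleph_1\) is a regular cardinal, preservation of \(\aleph_1\)-directed colimits says precisely that the functor is \(\aleph_1\)-accessible, hence accessible; and preservation of all limits says that it is continuous.

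It then remains to invoke the standard fact that a continuous accessible functor between locally presentable categories is a right adjoint~\cite[Theorem~1.66]{Adamek_Rosicky}. Applying this to the embeddings in~\eqref{eq:SCat_inclusions}, namely \(\SCat{s}\into\SCat{w}\), \(\SCat{ft}\into\SCat{w}\), and \(\SCat{g}\into\SCat{ft}\), produces a left adjoint in each case; equivalently, \(\SCat{s}\), \(\SCat{ft}\) and \(\SCat{g}\) are reflective subcategories of \(\SCat{w}\) (and \(\SCat{g}\) of \(\SCat{ft}\)), with reflector given by the unit of the adjunction. Composing adjoints then also realises the remaining embedding \(\SCat{g}\into\SCat{w}\) as a right adjoint.

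There is essentially no obstacle left at this stage: the substantive work — exhibiting the \(\Sigma\) functions that make \(\SCat{w}\) complete and closed under \(\aleph_1\)-directed colimits, and checking that each subclass is closed under these constructions — was already carried out in Proposition~\ref{prop:preserve_limits_chain_colimits}, and local presentability in Proposition~\ref{prop:SCat*_locally_presentable}. The one point worth a sentence of care is the passage from ``preserves \(\aleph_1\)-directed colimits'' to ``accessible'', which is immediate from the definition of an accessible functor once both categories are known to be \(\aleph_1\)-accessible. One could instead run a general adjoint functor theorem and verify a solution-set condition by hand, but quoting the accessible-functor theorem avoids that detour, so the route above is preferable.
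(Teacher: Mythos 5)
Your proposal is correct and follows exactly the paper's own route: both cite Proposition~\ref{prop:SCat*_locally_presentable} for local presentability, Proposition~\ref{prop:preserve_limits_chain_colimits} for preservation of limits and \(\aleph_1\)-directed colimits, and then conclude via \cite[Theorem~1.66]{Adamek_Rosicky}. Your added remarks (that preservation of \(\aleph_1\)-directed colimits is what makes the functor accessible, and that \(\SCat{g}\into\SCat{w}\) can also be obtained by composing adjoints) are accurate elaborations of the same one-line argument.
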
 \begin{proof}
  This follows immediately from~\cite[Theorem 1.66]{Adamek_Rosicky} since we have shown that each of the embedding functors between $\SCat{*}$ categories preserves limits and \(\aleph_1\)-directed colimits, and we know from Proposition~\ref{prop:SCat*_locally_presentable} that each $\SCat{*}$ category is locally presentable.
\end{proof}

Our proof of existence of the left adjoint functors is not constructive.
We provide a construction of the left adjoint to the embedding \(\SCat{s} \into \SCat{w}\) in Appendix~\ref{sec:adjunctions}.
We believe the other left adjoint functors can be constructed using the approach from~\cite[Theorem 29]{partialHorn}, where the authors provide a construction of the left adjoint of any embedding functor between categories of models of quasi-equational theories, provided the objects of the larger category satisfy a subset of the axioms of the smaller category.
A caveat is that the result in~\cite{partialHorn} applies to finitary quasi-equational theories, and in our case we would be dealing with theories of arity $\aleph_1$, so a generalisation of their theorem would be required.

\section{Tensor products of $\Sigma$-monoids}
\label{sec:tensor}

The results in this section are adapted from those in the appendix of Hoshino's work on the category of strong \(\Sigma\)-monoids~\cite{RTUDC}.
These are generalised so that they apply to \(\SCat{w}\), proving that it has a well-defined tensor product that makes it a symmetric monoidal closed category. Similarly to Hoshino~\cite{RTUDC}, we do not provide an explicit construction of the tensor product, but rather restrict to the proof of its existence.
We then prove that every $\SCat{*}$ category has a tensor product obtained by applying the left adjoint functor $F \colon \SCat{*} \to \SCat{w}$ to the tensor product in $\SCat{w}$. Indeed, every $\SCat{*}$ category is symmetric monoidal closed, and the functor $F$ is strong symmetric monoidal.
Some of the proofs are deferred to Appendix~\ref{sec:tensor_appendix}.

Proposition~\ref{prop:SCat*_locally_presentable} already established that \(\SCat{w}\) has small products. Consequently, \(\SCat{w}\) can be given a Cartesian monoidal structure. We will lift this Cartesian product to a tensor product through a notion of \(\Sigma\)-bilinear function.

\begin{definition} \label{def:bihom}
  Let \(X,Y,Z \in \SCat{*}\) be objects. A function \(f \colon X \times Y \to Z\) is \emph{\(\Sigma\)-bilinear} function if \(f(x,-)\) and \(f(-,y)\) are \(\Sigma\)-homomorphisms for all \(x \in X\) and all \(y \in Y\).
  Let \(\SCat{*}^{X,Y}(Z)\) be the set of all \(\Sigma\)-bilinear functions of type \(X \times Y \to Z\).
\end{definition}

For each category \(\SCat{*}\) and every pair of objects \(X,Y \in \SCat{*}\) there is a functor \(\SCat{*}^{X,Y} \colon \SCat{*} \to \Set\) that maps each object \(Z \in \SCat{*}\) to the set of \(\Sigma\)-bilinear functions \(\SCat{*}^{X,Y}(Z)\) and sends each \(\Sigma\)-homomorphism \(f \colon Z \to W\) to a function that maps each \(h \in \SCat{*}^{X,Y}(Z)\) to \(f \circ h \in \SCat{*}^{X,Y}(W)\).

\begin{lemma} \label{lem:SCatw_tensor_main}
  The comma category \(\comma{\{\bullet\}}{\SCat{w}^{X,Y}}\) has an initial object --- we denote it by \((p,X \otimes Y)\).
\end{lemma}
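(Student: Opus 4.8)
The plan is to apply the Adjoint Functor Theorem (or rather, recognise that the comma category in question has an initial object because the functor $\SCat{w}^{X,Y}$ is representable, or at least "accessible enough"). More precisely, the comma category $\comma{\{\bullet\}}{\SCat{w}^{X,Y}}$ has as objects pairs $(f, Z)$ where $Z \in \SCat{w}$ and $f \in \SCat{w}^{X,Y}(Z)$ is a $\Sigma$-bilinear function $X \times Y \to Z$, and as morphisms $(f,Z) \to (g,W)$ the $\Sigma$-homomorphisms $h \colon Z \to W$ with $h \circ f = g$. An initial object of this category is exactly a universal $\Sigma$-bilinear function, i.e. the tensor product $X \otimes Y$ together with its canonical bilinear map $p \colon X \times Y \to X \otimes Y$.

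\smallskip

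\noindent\textbf{Proof proposal.} The strategy is to show that the functor $\SCat{w}^{X,Y} \colon \SCat{w} \to \Set$ satisfies the hypotheses of the General Adjoint Functor Theorem, so that the comma category $\comma{\{\bullet\}}{\SCat{w}^{X,Y}}$ has an initial object. Concretely, I would:

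\begin{enumerate}
  \item \emph{Show $\SCat{w}^{X,Y}$ preserves limits.} A limit in $\SCat{w}$ is computed on underlying sets (Proposition~\ref{prop:preserve_limits_chain_colimits} describes products and equalisers explicitly), and a function into a limit is $\Sigma$-bilinear iff each composite with a projection/leg is $\Sigma$-bilinear; one checks this componentwise, using that $\Sigma^\times$ and $\Sigma^e$ are defined so that the legs are $\Sigma$-homomorphisms. Hence $\SCat{w}^{X,Y}$ sends limits in $\SCat{w}$ to limits (i.e. the obvious limits) in $\Set$. Equivalently, one phrases this as: the category $\comma{\{\bullet\}}{\SCat{w}^{X,Y}}$ is complete and the forgetful functor to $\SCat{w}$ creates limits.
  \item \emph{Verify the solution-set condition.} Since $\SCat{w}$ is locally presentable (Proposition~\ref{prop:SCat*_locally_presentable}), it is well-powered and every object is a $\aleph_1$-directed colimit of $\aleph_1$-presentable objects; moreover $\SCat{w}^{X,Y}$ preserves $\aleph_1$-directed colimits, because $\Sigma$-bilinearity of a map into a $\aleph_1$-directed colimit can be detected on a stage of the colimit (each family is countable, and each of the countably many instances of the defining implications involves only countably much data, so it is witnessed at some $\aleph_1$-presentable stage). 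From these two facts the solution set follows in the standard way: given $(f,Z)$, factor the bilinear map through the image of a sufficiently presentable subobject of $Z$, and bound the cardinality of the resulting possibilities by a set.
  \item \emph{Invoke the General Adjoint Functor Theorem} (equivalently, \cite[Theorem 1.66]{Adamek_Rosicky}, used just as in Theorem~\ref{thm:embedding_right_adjoint}): a limit-preserving functor out of a locally presentable (hence complete, well-powered, with a generator) category that satisfies the solution-set condition has a left adjoint; the value of the left adjoint at the one-point set $\{\bullet\}$, together with its universal arrow, is precisely an initial object of $\comma{\{\bullet\}}{\SCat{w}^{X,Y}}$. Name this initial object $(p, X \otimes Y)$.
\end{enumerate}

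\noindent The main obstacle is Step~2: checking the accessibility/solution-set condition carefully. The subtlety is that $\Sigma$-bilinearity is an $\aleph_1$-ary condition (it quantifies over countable summable families), so one must be careful that $\SCat{w}^{X,Y}$ is an $\aleph_1$-accessible functor rather than merely a finitary one — this is why the presentability rank $\aleph_1$ (rather than $\aleph_0$) appears throughout the paper, and the argument must respect it. Alternatively, and perhaps more cleanly, one can avoid the solution-set bookkeeping entirely by appealing to the locally presentable machinery directly: the category $\comma{\{\bullet\}}{\SCat{w}^{X,Y}}$ is itself the category of models of an $\aleph_1$-ary essentially algebraic theory (add to the theory of $\SCat{w}$, in the signature of $\SCat{w} \times \SCat{w}$ with $X, Y$ as parameters, the operation $p$ and the bilinearity axioms), hence locally $\aleph_1$-presentable by \cite[Theorem~3.36]{Adamek_Rosicky}, and a locally presentable category has an initial object. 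This is the route I would actually take, mirroring the proof of Proposition~\ref{prop:SCat*_locally_presentable}; the only real work is spelling out that the relevant theory is still $\aleph_1$-ary essentially algebraic, which is routine.
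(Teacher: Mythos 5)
Your overall strategy is sound and, in its first form, is essentially the paper's: the paper likewise proves that \(\SCat{w}^{X,Y}\) preserves limits (Lemma~\ref{lem:bihom_set_functor_preserves_limits}), deduces that \(\comma{\{\bullet\}}{\SCat{w}^{X,Y}}\) is complete and locally small, and then obtains the initial object from a weakly initial set via the general adjoint functor theorem. Where you diverge is in how the solution set is produced. The paper carries out the concrete version of your step~2: given a \(\Sigma\)-bilinear \(f \colon X \times Y \to W\), it factors \(f\) through a weak \(\Sigma\)-monoid carried by a quotient of a subset of \((X \times Y)^*\) (the summable families modulo equality of sums, with the \(\Sigma\) structure transported along the resulting injection into \(W\) via Lemma~\ref{lem:wSm_restriction}), so that the collection of all bilinear maps out of \(X \times Y\) into such carriers is manifestly a set. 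Your preferred alternative --- presenting \(\comma{\{\bullet\}}{\SCat{w}^{X,Y}}\) itself as the category of models of an \(\aleph_1\)-ary essentially algebraic theory (the theory of \(\SCat{w}\) augmented with a constant \(p(x,y)\) for each pair and, for each summable family in \(X\) or \(Y\), a pair of equations on \(s_n\) and \(\sigma_n\) forcing the corresponding family of constants to be summable with the right sum) and then invoking cocompleteness of locally presentable categories --- is genuinely different from the paper and does work: it delivers the initial object with no explicit construction, at the cost of checking that the augmented theory is still \(\aleph_1\)-ary essentially algebraic (it is, since each bilinearity equation involves only countably many constants and there is only a set of such equations, and one must remember to include the totality equation \(s_n(\ldots)=0\) alongside the \(\sigma_n\)-equation, exactly as the paper does in Appendix~\ref{sec:locally_presentable}).

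One concrete error to repair in step~2 of your first route: \(\SCat{w}^{X,Y}\) does \emph{not} preserve \(\aleph_1\)-directed colimits in general. There are \(\abs{X^* \times Y} + \abs{X \times Y^*}\) instances of the bilinearity condition, not countably many, and already the underlying function of a bilinear map \(X \times Y \to \Lim{} Z_i\) need not factor through any stage of an \(\aleph_1\)-directed diagram when \(\abs{X \times Y} \geq \aleph_1\). The functor is only \(\lambda\)-accessible for \(\lambda\) sufficiently large relative to \(\abs{X \times Y}\); this still suffices for the accessible adjoint functor theorem or for the solution-set condition, but the cardinal must be allowed to depend on \(X\) and \(Y\) rather than being fixed at \(\aleph_1\). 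Alternatively, drop the accessibility detour entirely and carry out the image factorisation you sketch at the end of step~2, which is precisely what the paper's proof of Lemma~\ref{lem:SCatw_tensor} does.
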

\begin{proof}
  See Lemma~\ref{lem:SCatw_tensor}.
\end{proof}

The previous lemma establishes the existence of a \(\Sigma\)-monoid \(X \otimes Y\) --- known as the tensor product --- and a \(\Sigma\)-bilinear function \(p \colon X \times Y \to X \otimes Y\) that let us uniquely characterise any \(\Sigma\)-bilinear function \(f \colon X \times Y \to Z\) as a \(\Sigma\)-homomorphism \(\bar{f} \colon X \otimes Y \to Z\) so that \(f = \bar{f} \circ p\). We now show that \(\SCat{w}\) has a symmetric monoidal closed structure given by this tensor product.

The monoidal unit of $\SCat{w}$ is $I = (\{0, 1\}, \Sigma)$ with its $\Sigma$ function defined as follows:
\begin{equation}
    \Sigma \{n_j\}_J = \begin{cases}
      0 \quad &\ifc \neg\exists j \in J \colon n_j = 1 \\
      1 \quad &\ifc \exists! j \in J \colon n_j = 1 \\
      \undefined &\otherwise.
    \end{cases}
\end{equation}
For every \(X \in \SCat{w}\) we define functions \(l_X \colon I \times X \to X\) and \(r_X \colon X \times I \to X\) where:
\begin{equation} \label{eq:lr_bihoms}
  \begin{aligned}
    l_X(0,x) &= 0 = r_X(x,0) \\
    l_X(1,x) &= x = r_X(x,1) \\
  \end{aligned}
\end{equation}
It is straightforward to check that \(l_X\) and \(r_X\) are both \(\Sigma\)-bilinear functions.

\begin{corollary} \label{cor:tensor_unitors}
  Let \(l_X\) and \(r_X\) be the \(\Sigma\)-bilinear functions defined in~\eqref{eq:lr_bihoms} and let \(a\) be the associator from the Cartesian monoidal structure on \(\SCat{w}\). The following diagram commutes in $\Set$:
  \[\begin{tikzcd}[column sep=tiny]
    {(X \times I) \times Y} && {X \times (I \times Y)} \\
    {X \times Y} && {X \times Y} \\
    & {X \otimes Y}
    \arrow["p", from=2-1, to=3-2]
    \arrow["p"', from=2-3, to=3-2]
    \arrow["{r \times \id}", from=1-1, to=2-1]
    \arrow["{\id \times l}"', from=1-3, to=2-3]
    \arrow["a", from=1-1, to=1-3]
  \end{tikzcd}\]
\end{corollary}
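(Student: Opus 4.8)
\textit{Proof proposal.} The plan is to verify the claim directly on elements, which suffices because the diagram is only asserted to commute in $\Set$. An arbitrary element of the top-left corner has the form $((x,n),y)$ with $x \in X$, $n \in \{0,1\}$, $y \in Y$, and the Cartesian associator acts by $a((x,n),y) = (x,(n,y))$. Tracing the two legs, commutativity of the diagram reduces to the identity $p(r_X(x,n),y) = p(x, l_Y(n,y))$ in $X \otimes Y$; since the underlying set of $I$ is $\{0,1\}$, this splits into the cases $n = 1$ and $n = 0$.

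The case $n = 1$ is immediate from~\eqref{eq:lr_bihoms}: there $r_X(x,1) = x$ and $l_Y(1,y) = y$, so both sides are literally $p(x,y)$.

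The case $n = 0$ needs one auxiliary fact, which I would record first: every $\Sigma$-homomorphism preserves the neutral element. This follows by applying the defining implication of a $\Sigma$-homomorphism to the empty family, $\Sigma \emptyset \keq 0 \implies \Sigma' f\emptyset \keq f(0)$, together with $f\emptyset = \emptyset$, so that $f(0) \keq \Sigma' \emptyset \keq 0$. In particular, since $\Sigma$-bilinearity of $p$ means precisely that $p(-,y)$ and $p(x,-)$ are $\Sigma$-homomorphisms, we obtain $p(0,y) = 0 = p(x,0)$ in $X \otimes Y$. Combining this with $r_X(x,0) = 0$ and $l_Y(0,y) = 0$ from~\eqref{eq:lr_bihoms} yields $p(r_X(x,0),y) = p(0,y) = 0 = p(x,0) = p(x,l_Y(0,y))$, closing the case.

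I do not anticipate a genuine obstacle: the mathematical content is entirely the observation that $\Sigma$-bilinear maps annihilate $0$ in either argument, and the remainder is a two-case element chase. The only points that require care are to keep the argument inside $\Set$ — no appeal to the universal property of $\otimes$ is needed beyond the bare existence of the $\Sigma$-bilinear map $p$ supplied by Lemma~\ref{lem:SCatw_tensor_main} — and to keep track of which instances of $l$ and $r$ (here $l_Y$ and $r_X$) sit on each leg of the diagram.
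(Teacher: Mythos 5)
Your proposal is correct and follows exactly the paper's approach: the paper's proof is the one-line "immediate from evaluating the action of both paths on inputs $((x,0),y)$ and $((x,1),y)$", and you have simply spelled out the two cases, including the (correct) auxiliary observation that $\Sigma$-bilinearity of $p$ forces $p(0,y)=0=p(x,0)$ via the empty family. No gaps.
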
 \begin{proof}
    Immediate from evaluating the action of both paths on inputs $((x,0),y)$ and $((x,1),y)$.
\end{proof}

These \(l\) and \(r\) will be lifted through \(p\) to define the unitors of the monoidal structure given by \(\otimes\). The associator is trickier since the \(\Sigma\)-homomorphism characterisation of a `\(\Sigma\)-trilinear' function is not yet clear. Here, we say \(f\) is a \emph{\(\Sigma\)-trilinear} function if \(f(x,y,-)\), \(f(x,-,z)\) and \(f(-,y,z)\) are all \(\Sigma\)-homomorphisms for every choice of \(x \in X\), \(y \in Y\) and \(z \in Z\).
In the following, we establish that the object \((X \otimes Y) \otimes Z\) along with the \(\Sigma\)-trilinear function \(p \circ (p \times \id)\) can take up the role of the ternary tensor product.

\begin{lemma} \label{lem:SCat_3_tensor_main}
  For objects \(X,Y,Z,W\) in $\SCat{w}$, let \(f \colon (X \times Y) \times Z \to W\) be a \(\Sigma\)-trilinear function.
  There is a unique \(\Sigma\)-homomorphism \((X \otimes Y) \otimes Z \to W\) making the following diagram commute in $\Set$:
  \[\begin{tikzcd}
    {(X \times Y) \times Z} & W \\
    & {(X \otimes Y) \otimes Z}
    \arrow[dashed, from=2-2, to=1-2]
    \arrow["f", from=1-1, to=1-2]
    \arrow["{p\circ(p \times \id)}"', from=1-1, to=2-2]
  \end{tikzcd}\]
\end{lemma}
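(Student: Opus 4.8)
The plan is to fix the $\Sigma$-trilinear $f \colon (X \times Y) \times Z \to W$ and produce the dashed arrow by applying the universal property of $\otimes$ (Lemma~\ref{lem:SCatw_tensor_main}) twice, in stages — first currying the $Z$-argument, then the $Y$-argument. Concretely, for each fixed $z \in Z$ the function $f(-,-,z) \colon X \times Y \to W$ is $\Sigma$-bilinear (its partial applications $f(x,-,z)$ and $f(-,y,z)$ are $\Sigma$-homomorphisms by $\Sigma$-trilinearity), so Lemma~\ref{lem:SCatw_tensor_main} yields a unique $\Sigma$-homomorphism $g_z \colon X \otimes Y \to W$ with $g_z \circ p = f(-,-,z)$. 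Assembling these gives a function $g \colon (X \otimes Y) \times Z \to W$, $g(t,z) = g_z(t)$, and the goal becomes to show $g$ is $\Sigma$-bilinear; then a final application of Lemma~\ref{lem:SCatw_tensor_main} produces the unique $\Sigma$-homomorphism $\overline{g} \colon (X \otimes Y) \otimes Z \to W$ with $\overline{g} \circ p = g$, and unwinding the definitions shows $\overline{g} \circ p \circ (p \times \id) = f$, as required. Uniqueness of $\overline{g}$ follows because $p \circ (p \times \id)$ has image generating $(X \otimes Y) \otimes Z$ under $\Sigma$ in the appropriate sense — more precisely, any two $\Sigma$-homomorphisms agreeing after precomposition with $p \circ (p\times\id)$ must agree, by running the uniqueness clauses of the two invocations of Lemma~\ref{lem:SCatw_tensor_main} in reverse.

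For the $\Sigma$-bilinearity of $g$, one half is easy and one half is the crux. Fixing $t \in X \otimes Y$, the map $g(t,-) \colon Z \to W$ being a $\Sigma$-homomorphism is \emph{not} immediate from the construction, since $t$ need not lie in the image of $p$; this is the main obstacle. Fixing $z \in Z$ instead, $g(-,z) = g_z$ is a $\Sigma$-homomorphism by construction, so that direction is free. To handle $g(t,-)$, the standard device is to consider, for a fixed summable family $\{z_k\}_K$ in $Z$ with $\Sigma\{z_k\}_K \keq z$, the set $S = \{t \in X \otimes Y \mid \Sigma\{g(t,z_k)\}_K \keq g(t,z)\}$, and show $S$ is all of $X \otimes Y$. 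One shows $S$ contains the image of $p$ — for $t = p(x,y)$ this reduces to $\Sigma\{f(x,y,z_k)\}_K \keq f(x,y,z)$, which holds since $f(x,y,-)$ is a $\Sigma$-homomorphism — and then argues $S$ is closed under the operations defining $\otimes$, using that both $g(-,z_k)$ and $g(-,z)$ are $\Sigma$-homomorphisms together with the bracketing/flattening axioms to interchange the two summations over $K$ and over the defining family of $t$. Because $(p, X\otimes Y)$ is \emph{initial} among $\Sigma$-bilinear maps out of $X \times Y$, a subobject of $X \otimes Y$ that contains $\im p$ and is closed under $\Sigma$ (i.e.\ is itself a sub-$\Sigma$-monoid through which $p$ factors) must be all of $X \otimes Y$ — this is the categorical substitute for "$\im p$ generates $X \otimes Y$", and it is exactly what forces $S = X \otimes Y$.

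I expect the interchange-of-summations step to be where the work concentrates: one has a doubly-indexed family $\{g(p(x_i,y_i), z_k)\}_{(i,k) \in I \times K}$ (writing $t$ as coming from a summable family $\{p(x_i,y_i)\}_I$, modulo the usual caveat that elements of $X \otimes Y$ are only reached through iterated $\Sigma$) and must sum it in both orders, matching $\Sigma\{g(t,z_k)\}_K$ on one side against $g(t,z)$ on the other. Since the ambient $\Sigma$-monoid is only \emph{weak}, one must be careful to invoke bracketing only with finite inner families and flattening only with finitely many families; the argument should be arranged so that the relevant regroupings stay within those constraints, exactly as in the proof of Lemma~\ref{lem:SCatw_tensor_main} / Lemma~\ref{lem:SCatw_tensor} that this result parallels. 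Everything else — the two currying steps, the commutativity of the triangle, and uniqueness — is then a routine diagram chase.
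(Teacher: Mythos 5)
You have correctly isolated the crux --- showing that \(g(t,-) \colon Z \to W\) is a \(\Sigma\)-homomorphism for an arbitrary \(t \in X \otimes Y\) --- but the mechanism you propose for it does not go through. The interchange you need, namely \(\Sigma \{\Sigma\{g(t_i,z_k)\}_I\}_K \keq \Sigma \{\Sigma\{g(t_i,z_k)\}_K\}_I\) for a doubly-indexed family with both \(I\) and \(K\) possibly infinite, is precisely the kind of regrouping that weak \(\Sigma\)-monoids do not license: bracketing applies only when every inner family is finite, and flattening only when there are finitely many inner families, so neither direction of the regrouping is available when both index sets are infinite --- this is essentially the content of the strong axioms that \(\SCat{w}\) deliberately drops. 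Your closing remark that ``the argument should be arranged so that the relevant regroupings stay within those constraints'' is exactly where the proof would have to happen, and no such arrangement is given. A further wrinkle is that \(X \otimes Y\) is produced non-constructively (weakly initial set plus completeness), so there is no presentation by generators to induct over. Your categorical substitute for generation is, however, sound, and in a stronger form than you state: for any subset \(S \subseteq X \otimes Y\) containing \(0\) and \(\im(p)\), Lemma~\ref{lem:wSm_restriction} endows \(S\) with a \(\Sigma\)-structure making the inclusion a \(\Sigma\)-homomorphism and making the corestriction of \(p\) \(\Sigma\)-bilinear (its partial sums land in \(\im(p) \subseteq S\)), so initiality forces the inclusion to split and hence \(S = X \otimes Y\). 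No closure-under-\(\Sigma\) verification is needed for this, so the problematic interchange is not only unavailable but also not the right thing to be proving; taken to its conclusion this argument even shows \(p\) is surjective, which would reduce your crux to the case \(t = p(x,y)\) that you already handle.

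The paper avoids the issue entirely by currying in the other order, into the internal hom. By Lemma~\ref{lem:SCat_internal_hom_main}, \([Z,W]\) is itself a weak \(\Sigma\)-monoid whose underlying set consists, by definition, of \(\Sigma\)-homomorphisms \(Z \to W\). One checks that \(\Lambda f \colon X \times Y \to [Z,W]\), \((x,y) \mapsto f(x,y,-)\), is \(\Sigma\)-bilinear and that the evaluation map \(\mathrm{ev} \colon [Z,W] \times Z \to W\) is \(\Sigma\)-bilinear; the universal property of \(X \otimes Y\) then yields a \(\Sigma\)-homomorphism \(\overline{\Lambda f} \colon X \otimes Y \to [Z,W]\), and the fact that \(\overline{\Lambda f}(t)\) is a \(\Sigma\)-homomorphism for \emph{every} \(t\) --- precisely what you are missing for \(g(t,-)\) --- comes for free, because it is built into the carrier of \([Z,W]\). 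Then \(\mathrm{ev} \circ (\overline{\Lambda f} \times \id)\) is \(\Sigma\)-bilinear and a second application of the universal property produces the desired morphism. Your uniqueness argument (peeling off the two universal properties in turn) is fine and matches the paper's.
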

\begin{proof}
  See Lemma~\ref{lem:SCat_3_tensor}.
\end{proof}

Similarly, any \(\Sigma\)-trilinear function \(f \colon X \times (Y \times Z) \to W\) factors uniquely through \(p \circ (\id \times p)\), \ie{} both \((X \otimes Y) \otimes Z\) and \(X \otimes (Y \otimes Z)\) are ternary tensor products.
It is finally time to define the monoidal structure on the tensor product.

\begin{definition} \label{def:tensor_lifting}
  Let \(X \xto{f} Z\) and \(Y \xto{g} W\) be morphisms in \(\SCat{w}\). Define \(f \otimes g\), \(\lambda\), \(\rho\) and \(\alpha\) to be the unique \(\Sigma\)-homomorphisms making the following diagrams commute in $\Set$:
  \[\begin{tikzcd}[column sep=small]
    {X \times Y} & {Z \times W} && {I \times X} & X && {X \times I} & X \\
    {X \otimes Y} & {Z \otimes W} && {I \otimes X} &&& {X \otimes I}
    \arrow["{f \times g}", from=1-1, to=1-2]
    \arrow["p"', from=1-1, to=2-1]
    \arrow["p", from=1-2, to=2-2]
    \arrow["{f \otimes g}"', dashed, from=2-1, to=2-2]
    \arrow["l", from=1-4, to=1-5]
    \arrow["p"', from=1-4, to=2-4]
    \arrow["\lambda"', dashed, from=2-4, to=1-5]
    \arrow["\rho"', dashed, from=2-7, to=1-8]
    \arrow["r", from=1-7, to=1-8]
    \arrow["p"', from=1-7, to=2-7]
  \end{tikzcd}\]
  \[\begin{tikzcd}[column sep=small]
    {(X \times Y) \times Z} && {X \times(Y \times Z)} \\
    {(X \otimes Y) \otimes Z} && {X \otimes (Y \otimes Z).}
    \arrow["a", from=1-1, to=1-3]
    \arrow["{p\circ(\id \times p)}", from=1-3, to=2-3]
    \arrow["{p \circ (p \times \id)}"', from=1-1, to=2-1]
    \arrow["\alpha"', dashed, from=2-1, to=2-3]
  \end{tikzcd}\]
\end{definition}

\begin{lemma} \label{lem:SCatw_tensor_monoidal}
  The category \((\SCat{w},\otimes,I)\) is monoidal.
\end{lemma}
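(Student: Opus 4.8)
The plan is to verify the standard coherence data for a monoidal category: that $\lambda$, $\rho$, and $\alpha$ are natural isomorphisms, and that the triangle and pentagon identities hold. Throughout, the key technique is the \emph{uniqueness} half of the universal properties in Lemma~\ref{lem:SCatw_tensor_main} and Lemma~\ref{lem:SCat_3_tensor_main}: to prove two $\Sigma$-homomorphisms out of $X \otimes Y$ (or out of $(X \otimes Y) \otimes Z$) agree, it suffices to precompose with $p$ (resp.\ $p \circ (p \times \id)$) and check equality of the resulting $\Sigma$-bilinear (resp.\ $\Sigma$-trilinear) functions in $\Set$, which is a finite check on elements. So every diagram below is "reduced to $\Set$" and then verified on elements of $X \times Y$ or $(X \times Y) \times Z$.

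First I would establish naturality. For $\lambda$: given $f \colon X \to Z$, the square relating $\lambda_X$, $\lambda_Z$, $\id_I \otimes f$ and $f$ becomes, after precomposing with $p \colon I \times X \to I \otimes X$ and using the defining diagrams of $\lambda$ and of $\id \otimes f$, an equality of functions $I \times X \to Z$; evaluating on $(0,x)$ and $(1,x)$ using~\eqref{eq:lr_bihoms} gives $0 = f(0)$ and $f(x) = f(x)$, which hold since $f$ is a $\Sigma$-homomorphism hence $f(0) = 0$. Naturality of $\rho$ is symmetric, and naturality of $\alpha$ and of $\otimes$ on morphisms follows the same pattern, this time reducing through $p \circ (p \times \id)$ and checking on triples. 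That $f \otimes g$ is functorial in $(f,g)$ (preserves identities and composition) is another immediate uniqueness argument. To see $\lambda$, $\rho$, $\alpha$ are isomorphisms, I would exhibit inverses: the $\Sigma$-bilinear function $X \to I \times X$, $x \mapsto (1,x)$ induces a $\Sigma$-homomorphism $X \to I \otimes X$; wait --- more carefully, one builds the candidate inverse $\lambda^{-1}$ directly and checks $\lambda \circ \lambda^{-1} = \id$ and $\lambda^{-1} \circ \lambda = \id$ by the usual reduction to $\Set$ (the second composite is checked via the universal property of $p$). The map $x \mapsto (1,x)$ is indeed a $\Sigma$-homomorphism $X \to I \times X$ because $1$ is the unit of $I$ and the product $\Sigma$ function in~\eqref{eq:SCat_product} sums componentwise --- a family $\{(1,x_i)\}_I$ is summable in $I \times X$ iff $\{x_i\}_I$ is summable in $X$ and $\{1\}_I$ is summable in $I$; the latter fails for infinite $I$. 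This is the subtle point: $x \mapsto (1,x)$ is \emph{not} a $\Sigma$-homomorphism on infinite summable families, so instead one uses that $\lambda$ is split epi (it has the section induced through $p$ from $x \mapsto (1,x)$ composed with... ) --- the cleanest route is to note $l_X$ itself, as a $\Sigma$-bilinear map, has the property that $l_X$ restricted appropriately gives a two-sided inverse to $\lambda$ at the level of the universal property, and I would lean on the corresponding argument in Hoshino~\cite{RTUDC}. Concretely: $\lambda$ is invertible because the composite $X \xrightarrow{p(1,-)} I \otimes X \xrightarrow{\lambda} X$ is $\id_X$, and the other composite $I \otimes X \to X \to I \otimes X$ equals $\id_{I \otimes X}$ by uniqueness after precomposing with $p$ and checking on $(0,x), (1,x)$.

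Next I would verify the triangle identity, relating $\alpha$, $\rho \otimes \id$, $\id \otimes \lambda$ for $(X \otimes I) \otimes Y$ and $X \otimes (I \otimes Y)$. Precomposing the whole triangle with $p \circ (p \times \id) \colon (X \times I) \times Y \to (X \otimes I) \otimes Y$ and unfolding the defining diagrams of $\alpha$, $\rho$, $\lambda$, $\rho \otimes \id$, $\id \otimes \lambda$, everything reduces to an equation of functions $(X \times I) \times Y \to X \otimes Y$; Corollary~\ref{cor:tensor_unitors} is precisely the statement that the two sides agree after composing with $p$, evaluated on $((x,0),y)$ and $((x,1),y)$, so the triangle follows directly from that corollary together with the uniqueness clause. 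For the pentagon, I would precompose with $p \circ (p \times \id) \circ (( p \times \id) \times \id)$ --- i.e.\ reduce to the level of the "quaternary tensor" by iterating Lemma~\ref{lem:SCat_3_tensor_main} (one needs the evident extension that $((X \otimes Y) \otimes Z) \otimes W$ together with the canonical $\Sigma$-four-linear map is universal among $\Sigma$-four-linear maps, proved by the same argument as Lemma~\ref{lem:SCat_3_tensor_main}) --- so that both sides of the pentagon become morphisms out of this universal object, and then both sides, precomposed with the canonical $((X\times Y)\times Z)\times W \to ((X \otimes Y)\otimes Z)\otimes W$, equal the associativity reshuffling $a$-composite in the Cartesian structure on $\SCat{w}$, which already satisfies the pentagon. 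Hence the two sides agree and, by uniqueness, so do the original $\Sigma$-homomorphisms.

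The main obstacle I anticipate is the bookkeeping around the unitors being isomorphisms, precisely because the "obvious" inverse map $x \mapsto (1,x)$ fails to be a $\Sigma$-homomorphism on infinite families (unlike in the group or total case); the resolution is that one never needs that map to be a homomorphism --- one only needs the induced $\Sigma$-homomorphism out of $I \otimes X$, and the two composites are identities by the universal property, checked on the generating elements $(0,x)$ and $(1,x)$ of $I \times X$. A secondary bit of care is needed to make sure the iterated-tensor universal properties (ternary, and the quaternary one needed for the pentagon) are genuinely available; these are routine generalisations of Lemma~\ref{lem:SCat_3_tensor_main} but should be stated, and I would defer their proofs to Appendix~\ref{sec:tensor_appendix} alongside the other deferred proofs. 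Everything else is a mechanical "reduce to $\Set$, evaluate on elements" computation enabled by the universal properties already in hand.
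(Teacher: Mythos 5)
Your proposal is correct and follows essentially the same route as the paper's proof: reduce every coherence diagram to $\Set$ via the uniqueness clauses of the universal properties, derive the triangle from Corollary~\ref{cor:tensor_unitors} and the pentagon from the Cartesian one, and invert $\lambda$ using the map $p(1,-)$, which is a $\Sigma$-homomorphism by $\Sigma$-bilinearity of $p$ even though $x \mapsto (1,x)$ itself is not. Your explicit flagging of that last subtlety, and of the need for a quaternary analogue of Lemma~\ref{lem:SCat_3_tensor_main} for the pentagon, only makes explicit what the paper's proof leaves implicit.
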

\begin{proof}
  On objects \(X,Y \in \SCat{w}\), the functor \(\otimes\) yields \(X \otimes Y\) from Lemma~\ref{lem:SCatw_tensor_main} and, on morphisms \(f\) and \(g\), it yields the morphism \(f \otimes g\) from Definition~\ref{def:tensor_lifting}; it is straightforward to check that this is indeed a functor, thanks to functoriality of the Cartesian product and the universal property of \(X \otimes Y\).
  Unitors \(\lambda\) and \(\rho\) and associators \(\alpha\) are given in Definition~\ref{def:tensor_lifting}.
  The triangle equation follows from Corollary~\ref{cor:tensor_unitors} after lifting all occurrences of \(\times\) to \(\otimes\) using \(p\), whereas the pentagon equation follows from the pentagon equation of the Cartesian monoidal structure on \(\SCat{w}\) along with naturality of \(a\).
  Naturality of \(\lambda\), \(\rho\) and \(\alpha\) follow trivially.
  The inverse of \(\alpha\) is induced from the inverse of \(a\) and the universal property of \(X \otimes (Y \otimes Z)\).
  The inverse of \(\lambda\) is somewhat trickier: let \(i \colon X \to I \times X\) be the function that maps each \(x \in X\) to \((1,x)\); it is immediate that \(l \circ i = \id\), and \(\lambda \circ (p \circ i) = \id\) follows from the definition of \(\lambda\). Notice that \(p \circ i\) is a \(\Sigma\)-homomorphism by virtue of \(p\) being a \(\Sigma\)-bilinear function. To prove that \(p \circ i\) is the inverse of \(\lambda\) it remains to show that \((p \circ i) \circ \lambda = \id\).
  Notice that:  
  \begin{equation*}
    p(1,l(0,x)) = p(1, \Sigma \emptyset) = \Sigma \emptyset = p(0,x)
  \end{equation*}
  \begin{equation*}
    p(1,l(1,x)) = p(1, x)
  \end{equation*}
  so, \(p \circ i \circ l = p\) and, by definition of \(\lambda\), we have \(p \circ i \circ \lambda \circ p = p\); this means that the \(\Sigma\)-bilinear function \(p\) may be characterised by the \(\Sigma\)-homomorphism \(\bar{p} = p \circ i \circ \lambda\). However, \(p\) is also trivially characterised by \(\bar{p} = \id\) and, due to uniqueness of \(\bar{p}\), we conclude that \((p \circ i) \circ \lambda = \id\).
  Therefore, the \(\Sigma\)-homomorphism \(p \circ i\) is the inverse of \(\lambda\) and we may define the inverse of \(\rho\) by similar means.
  This completes the proof.
\end{proof}

To see that $\SCat{w}$ is monoidal closed, we need to introduce a \(\Sigma\)-monoid structure on sets of \(\Sigma\)-homomorphisms.

\begin{lemma} \label{lem:SCat_internal_hom_main}
  For objects \(X,Y \in \SCat{w}\), the hom-set \([X,Y]\) may be endowed with a partial function \(\Sigma^\to \colon [X,Y]^* \pto [X,Y]\) defined as:
  \begin{equation*}
    \Sigma^\to \fml{f} = \begin{cases}
      s_\fml{f} &\ifc s_\fml{f} \in [X,Y] \\
      \undefined &\otherwise
    \end{cases}
  \end{equation*}
  for every family \(\fml{f} = \{f_i\}_I \in [X,Y]^*\), where \(s_\fml{f}(x) = \Sigma \{f_i(x)\}_I\) for each \(x \in X\).
  Then, \(([X,Y], \Sigma^\to) \in \SCat{w}\).
\end{lemma}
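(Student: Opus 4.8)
The plan is to verify the four weak $\Sigma$-monoid axioms of Definition~\ref{def:wSm} for $([X,Y],\Sigma^\to)$, exploiting the fact that $\Sigma^\to$ is computed pointwise: whenever $\Sigma^\to\fml{f}$ is defined its value at $x\in X$ is the sum $\Sigma\{f_i(x)\}_I$ in $Y$. So for each axiom I would invoke the corresponding axiom of $(Y,\Sigma)$ at every point $x$ — legitimate because evaluation-at-$x$ carries a disjoint union $\uplus_J\fml{f}_j$ of families of functions to the disjoint union $\uplus_J$ of the evaluated families, and because it preserves finiteness of index sets — and then the only remaining task is to check that the resulting pointwise sum $s_\fml{f}$ is again a $\Sigma$-homomorphism, i.e.\ lies in $[X,Y]$. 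In every case this will be automatic, since the candidate output will be shown to coincide, as a function, with an element of $[X,Y]$ already supplied by the hypothesis.

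First the easy axioms. The \textbf{singleton} axiom holds because $s_{\{f\}}(x)=\Sigma\{f(x)\}=f(x)$ for all $x$, so $s_{\{f\}}=f\in[X,Y]$ and $\Sigma^\to\{f\}\keq f$. For the \textbf{neutral element} axiom the zero of $[X,Y]$ is the constant function $z\colon x\mapsto 0$; it is a $\Sigma$-homomorphism because Proposition~\ref{prop:neutral_element} applied to $\Sigma\emptyset\keq 0$ gives $\Sigma\{0\}_I\keq 0$ for every countable family of zeros, so $z$ carries any summable $\fml{x}\in X^*$ to a family summing to $z(\Sigma\fml{x})=0$. Since $s_\emptyset(x)=\Sigma\emptyset=0=z(x)$, we get $\Sigma^\to\emptyset\keq z$, as required.

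Next, \textbf{bracketing} and \textbf{flattening}. Given an indexed set $\{\fml{f}_j\}_J$ of summable families with $\fml{f}=\uplus_J\fml{f}_j$, write $\fml{f}_j=\{f_{jk}\}_{k\in K_j}$ and $g_j=\Sigma^\to\fml{f}_j\in[X,Y]$; then $g_j(x)=\Sigma\{f_{jk}(x)\}_{K_j}$ for each $x$, and $\{f_i(x)\}_I=\uplus_J\{f_{jk}(x)\}_{K_j}$. If each $K_j$ is finite and $\Sigma^\to\fml{f}\keq g$, then for every $x$ bracketing in $Y$ yields $\Sigma\{g_j(x)\}_J\keq g(x)$, so $s_{\{g_j\}_J}=g\in[X,Y]$ and $\Sigma^\to\{\Sigma^\to\fml{f}_j\}_J\keq g$. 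If instead $J$ is finite and $\Sigma^\to\{g_j\}_J\keq g$, then for every $x$ flattening in $Y$ yields $\Sigma\{f_{jk}(x)\}_I\keq g(x)$, so $s_\fml{f}=g\in[X,Y]$ and $\Sigma^\to\fml{f}\keq g$.

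The one point needing care is the second half of the \textbf{neutral element} axiom: if $\fml{f}=\{f_i\}_I$ is summable with $\Sigma^\to\fml{f}=g$, the subfamily $\fml{f}_\emptyset$ indexed by $I'=\{i:f_i\ne z\}$ must be summable. The subtlety is that evaluating at a point $x$ does not send $\fml{f}_\emptyset$ to the subfamily of nonzero values of $\{f_i(x)\}_I$, since there may be $i\in I'$ with $f_i(x)=0$. To handle this, fix $x$, put $I_x=\{i\in I:f_i(x)\ne 0\}\subseteq I'$, apply the neutral element axiom in $Y$ to see $\{f_i(x)\}_{I_x}$ is summable, use Proposition~\ref{prop:neutral_element} to get $\Sigma\{f_i(x)\}_{I_x}=\Sigma\{f_i(x)\}_I=g(x)$, and then re-pad with the zeros indexed by $I'\setminus I_x$, again via Proposition~\ref{prop:neutral_element}, to conclude $\Sigma\{f_i(x)\}_{I'}=g(x)$. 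Hence $s_{\fml{f}_\emptyset}=g\in[X,Y]$ and $\Sigma^\to\fml{f}_\emptyset$ is defined. This interplay between the ``nonzero function'' and ``nonzero value'' subfamilies is the main obstacle; everything else transfers mechanically from the axioms of $Y$.
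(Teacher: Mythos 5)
Your proof is correct and follows essentially the same route as the paper's: verify each axiom of Definition~\ref{def:wSm} pointwise, transferring the corresponding axiom of \(Y\) through evaluation at each \(x \in X\) and checking that the resulting pointwise sum is again a \(\Sigma\)-homomorphism. Your handling of the second half of the neutral element axiom (distinguishing the indices where \(f_i \neq z\) from those where \(f_i(x) \neq 0\), and re-padding with zeros via Proposition~\ref{prop:neutral_element}) is in fact more explicit than the paper's, which treats this point rather briskly.
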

\begin{proof}
  See Lemma~\ref{lem:SCat_internal_hom}.
\end{proof}

\begin{proposition} \label{prop:SCatw_SMCC}
  The category \((\SCat{w},\otimes,I)\) is symmetric monoidal closed.
\end{proposition}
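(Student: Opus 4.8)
The plan is to establish the three remaining pieces of structure — a symmetry, the closed structure, and the compatibility between them — building on the monoidal structure from Lemma~\ref{lem:SCatw_tensor_monoidal} and the internal hom from Lemma~\ref{lem:SCat_internal_hom_main}. First I would construct the braiding. The swap map $\mathrm{sw}\colon X \times Y \to Y \times X$ of the Cartesian structure is an iso, and the composite $p \circ \mathrm{sw} \colon X \times Y \to Y \otimes X$ is $\Sigma$-bilinear (each partial application is a partial application of $p$ precomposed with nothing nontrivial), so by the universal property of $(p, X \otimes Y)$ from Lemma~\ref{lem:SCatw_tensor_main} it factors uniquely through $p$ as a $\Sigma$-homomorphism $\gamma_{X,Y}\colon X \otimes Y \to Y \otimes X$. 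Naturality of $\gamma$ follows from naturality of $\mathrm{sw}$ together with uniqueness in the universal property; $\gamma_{Y,X} \circ \gamma_{X,Y} = \id$ follows because $\mathrm{sw} \circ \mathrm{sw} = \id$ and, again, uniqueness; and the hexagon equations reduce to the corresponding (trivial) identities for $\mathrm{sw}$ and the Cartesian associator $a$, after lifting every occurrence of $\times$ to $\otimes$ via $p$ — exactly the same bookkeeping used for the triangle and pentagon in Lemma~\ref{lem:SCatw_tensor_monoidal}, now invoking the $\Sigma$-trilinear universal property of Lemma~\ref{lem:SCat_3_tensor_main} where three-fold coherence diagrams are involved.

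Next I would exhibit the adjunction $(- \otimes Y) \dashv [Y, -]$. The key observation is a natural bijection
\[
  \SCat{w}(X \otimes Y, Z) \;\cong\; \SCat{w}^{X,Y}(Z) \;\cong\; \SCat{w}(X, [Y,Z]),
\]
where the first bijection is precisely the universal property of the tensor (Lemma~\ref{lem:SCatw_tensor_main}: every $\Sigma$-bilinear $f\colon X\times Y \to Z$ corresponds to a unique $\bar f \colon X \otimes Y \to Z$). For the second bijection, send a $\Sigma$-bilinear $f$ to the map $x \mapsto f(x,-)$; this lands in $[Y,Z]$ because $f$ is bilinear (so $f(x,-)$ is a $\Sigma$-homomorphism for each $x$), and it is itself a $\Sigma$-homomorphism $X \to [Y,Z]$ precisely because $f(-,y)$ is a $\Sigma$-homomorphism for each $y$ and because $\Sigma^\to$ in Lemma~\ref{lem:SCat_internal_hom_main} is computed pointwise: $\Sigma^\to \fml{g} = s_\fml{g}$ with $s_\fml{g}(y) = \Sigma\{g_i(y)\}$. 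Conversely a $\Sigma$-homomorphism $h\colon X \to [Y,Z]$ yields the $\Sigma$-bilinear $(x,y)\mapsto h(x)(y)$ by the same pointwise reasoning run backwards. These assignments are mutually inverse and natural in $X$ and $Z$ by direct inspection. Hence $- \otimes Y$ has a right adjoint $[Y,-]$ for every $Y$, which is the definition of (monoidal) closedness once combined with the symmetry established above — and symmetry lets us upgrade to right-closed as well, giving that $[Y,Z]$ with the evaluation $\Sigma$-bilinear map serves as the internal hom object.

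I expect the main obstacle to be verifying that $\Sigma^\to$ interacts correctly with the bijections, \ie{} that the currying/uncurrying really does preserve summability in both directions rather than just in $\Set$. The subtlety is that $h\colon X \to [Y,Z]$ being a $\Sigma$-homomorphism means: whenever $\Sigma^X \fml{x} \keq x$, then $\Sigma^\to (h\fml{x})$ is defined and equals $h(x)$ — and $\Sigma^\to (h\fml{x})$ being defined means the pointwise-sum function $s_{h\fml{x}}$ not only exists pointwise but is itself in $[Y,Z]$, \ie{} is a $\Sigma$-homomorphism $Y \to Z$. Unwinding this against the condition that $(x,y)\mapsto h(x)(y)$ be $\Sigma$-bilinear requires care: bilinearity in the first argument is exactly $h$ being a $\Sigma$-homomorphism composed with the (pointwise) evaluation, while bilinearity in the second argument is each $h(x)$ being a $\Sigma$-homomorphism, which is automatic from $h(x) \in [Y,Z]$. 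The bookkeeping is routine but fiddly, and it is the one place where the precise form of $\Sigma^\to$ from Lemma~\ref{lem:SCat_internal_hom_main} — in particular that its domain is exactly the families whose pointwise sum stays inside $[X,Y]$ — is doing real work. Everything else (coherence of $\gamma$, naturality of the adjunction bijection) is formal consequence of uniqueness in the universal properties already proved, so I would state those steps crisply and spend the detailed argument on the currying bijection; several of these verifications can be deferred to Appendix~\ref{sec:tensor_appendix} in the style already adopted for Lemmas~\ref{lem:SCatw_tensor_main}, \ref{lem:SCat_3_tensor_main} and~\ref{lem:SCat_internal_hom_main}.
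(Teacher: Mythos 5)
Your proposal is correct and follows essentially the same route as the paper: the symmetry is obtained by lifting the Cartesian swap through $p$ via the universal property of the tensor, and closedness rests on the same two verifications the paper makes --- that evaluation $[Y,Z]\times Y\to Z$ is $\Sigma$-bilinear and that currying $x\mapsto f(x,-)$ lands in, and is a $\Sigma$-homomorphism into, $([Y,Z],\Sigma^\to)$ because $\Sigma^\to$ is computed pointwise. The only difference is packaging: you present the adjunction as the natural hom-set bijection $\SCat{w}(X\otimes Y,Z)\cong\SCat{w}^{X,Y}(Z)\cong\SCat{w}(X,[Y,Z])$, whereas the paper exhibits the counit $\overline{\mathrm{ev}}$ and verifies its universal property directly; the underlying computations coincide.
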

\begin{proof}
  The symmetry of the monoidal structure is derived from that of the Cartesian monoidal structure of \(\SCat{w}\), lifted using \(p\).
  Similarly, the closure is obtained by lifting the Cartesian closure of \(\Set\) through the universal property of the tensor product in \(\SCat{w}\).
  This is possible because the evaluation map \(\mathrm{ev} \colon [Y,Z] \times Y \to Z\) in \(\Set\) is a \(\Sigma\)-bilinear function when \([Y,Z], Z \in \SCat{w}\).
  To check this, let \(h \in [Z,W]\) be an arbitrary \(\Sigma\)-homomorphism and \(\fml{z} = \{z_i\}_I \in Z^*\) an arbitrary summable family, then:
  \begin{equation*}
    \mathrm{ev}(h,\Sigma \fml{z}) = h(\Sigma \fml{z}) \keq \Sigma h\fml{z} = \Sigma \{\mathrm{ev}(h,z_i)\}_I.
  \end{equation*}
  Similarly, let \(\fml{h} = \{h_i\}_I \in [Z,W]^*\) be an arbitrary summable family and fix an arbitrary \(z \in Z\), then:
  \begin{equation*}
    \mathrm{ev}(\Sigma^\to \fml{h},z) = \mathrm{ev}(s_\fml{h},z) = s_\fml{h}(z) = \Sigma \{h_i(z)\}_I = \Sigma \{\mathrm{ev}(h_i,z)\}_I.
  \end{equation*}
  Consequently, we can define \(\overline{\mathrm{ev}}\) as the unique \(\Sigma\)-homomorphism such that \(\mathrm{ev} = \overline{\mathrm{ev}} \circ p\).
  Furthermore, \(\overline{\mathrm{ev}}\) is a natural transformation due to naturality of \(\mathrm{ev}\), lifted through \(p\).
  For arbitrary objects \(X, Y, Z \in \SCat{w}\) and any \(\Sigma\)-homomorphism \(f \colon X \otimes Y \to Z\), the following diagram commutes in \(\Set\):
  \[\begin{tikzcd}
    {X \times Y} & {X \otimes Y} && Z \\
    & {[Y,Z] \otimes Y} \\
    {[Y,Z] \times Y}
    \arrow["p", from=1-1, to=1-2]
    \arrow["p", from=3-1, to=2-2]
    \arrow["f", from=1-2, to=1-4]
    \arrow["{\Lambda f' \otimes \id}"', dashed, from=1-2, to=2-2]
    \arrow["{\Lambda f' \times \id}"', dashed, from=1-1, to=3-1]
    \arrow["{\overline{\mathrm{ev}}}"{pos=0.4}, from=2-2, to=1-4]
    \arrow["{\mathrm{ev}}"', curve={height=70pt}, from=3-1, to=1-4]
  \end{tikzcd}\]
  where \(\Lambda f'(x) = f(p(x,-))\) for all \(x \in X\). The outer triangle commutes due to the Cartesian closed structure of \(\Set\), the square and bottom triangle commute by definition of the functor \(\otimes\) and the \(\Sigma\)-homomorphism \(\overline{\mathrm{ev}}\); hence, the upper right triangle commutes as well. Moreover, \(\Lambda f'\) is a \(\Sigma\)-homomorphism due to \(p\) being \(\Sigma\)-bilinear, so the upper right triangle is a commuting diagram in \(\SCat{w}\).
  Uniqueness of \(\Lambda f'\) is imposed by the monoidal closed structure of \(\Set\) and, since every \(\Sigma\)-homomorphism is a function, this implies uniqueness in \(\SCat{w}\), completing the proof.
\end{proof}

Now that the symmetric monoidal closed structure of $\SCat{w}$ has been established, we can show that each of the $\SCat{*}$ subcategories are also symmetric monoidal closed.
Choose a $\SCat{*}$ category and let $U \colon \SCat{*} \into \SCat{w}$ be the corresponding embedding functor. Thanks to Theorem~\ref{thm:embedding_right_adjoint} we know that $U$ has a left adjoint functor $F \colon \SCat{w} \to \SCat{*}$.

\begin{lemma} \label{lem:F(X,Y)}
    The pair $(\eta \circ p, F(X \otimes Y))$ is an initial object of the comma category \(\comma{\{\bullet\}}{\SCat{w}^{X,Y} \!\circ U}\).
\end{lemma}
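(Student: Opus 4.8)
The plan is to recognise the statement as a composite of two universal properties, using the standard fact that an initial object of a comma category $\comma{\{\bullet\}}{G}$, for a functor $G \colon \mathcal{C} \to \Set$, is precisely a universal element of $G$, equivalently a representing object together with the natural isomorphism $\mathcal{C}(c_0,-) \iso G$ it induces. Lemma~\ref{lem:SCatw_tensor_main} already says that $(p, X \otimes Y)$ is such a universal element for $\SCat{w}^{X,Y} \colon \SCat{w} \to \Set$, \ie{} it gives a natural isomorphism $\SCat{w}(X \otimes Y, -) \iso \SCat{w}^{X,Y}$ whose component at $Z$ is $h \mapsto h \circ p$. On the other hand, Theorem~\ref{thm:embedding_right_adjoint} supplies an adjunction $F \dashv U$ with unit $\eta$, hence a natural isomorphism $\SCat{*}(F(X \otimes Y), -) \iso \SCat{w}(X \otimes Y, U(-))$. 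The idea is to paste these two isomorphisms together.

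Concretely, I would first form, for each $Z \in \SCat{*}$, the chain of natural bijections
\begin{equation*}
  \SCat{*}(F(X \otimes Y), Z) \iso \SCat{w}(X \otimes Y, UZ) \iso \SCat{w}^{X,Y}(UZ) = (\SCat{w}^{X,Y} \circ U)(Z),
\end{equation*}
the first from the adjunction and the second from Lemma~\ref{lem:SCatw_tensor_main} at $UZ$; since $U$ is a full and faithful embedding, $\SCat{w}^{X,Y}(UZ)$ is literally the set of $\Sigma$-bilinear functions $X \times Y \to Z$, so no discrepancy in meaning arises. This exhibits $\SCat{w}^{X,Y} \circ U$ as representable with representing object $F(X \otimes Y)$. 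I would then identify the universal element by chasing $\id_{F(X \otimes Y)}$ through the composite: under the adjunction it becomes the unit $\eta \colon X \otimes Y \to UF(X \otimes Y)$, and under the isomorphism of Lemma~\ref{lem:SCatw_tensor_main} it then becomes $\eta \circ p$. As $p$ is $\Sigma$-bilinear and $\eta$ is a $\Sigma$-homomorphism, $\eta \circ p$ is $\Sigma$-bilinear, so $(\eta \circ p, F(X \otimes Y))$ is a legitimate object of the comma category, and by the above it is the universal element, hence initial.

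If a more hands-on argument is preferred, I would verify the universal property directly: given $Z \in \SCat{*}$ and a $\Sigma$-bilinear $f \colon X \times Y \to UZ$, Lemma~\ref{lem:SCatw_tensor_main} yields a unique $\Sigma$-homomorphism $\bar f \colon X \otimes Y \to UZ$ with $\bar f \circ p = f$, and the adjunction yields a unique $\Sigma$-homomorphism $h \colon F(X \otimes Y) \to Z$ with $Uh \circ \eta = \bar f$; then $Uh \circ (\eta \circ p) = f$, and any $h'$ with $Uh' \circ (\eta \circ p) = f$ has $Uh' \circ \eta = \bar f$ by uniqueness of $\bar f$, hence $h' = h$ by uniqueness in the adjunction. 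I do not expect a real obstacle here: the content is entirely formal. The only points needing care are the bookkeeping that pins the universal element down to $\eta \circ p$ rather than $\eta$ or $p$ alone, and the remark that naturality of the pasted isomorphism is inherited automatically from naturality of its two factors.
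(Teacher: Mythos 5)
Your proposal is correct and matches the paper's argument: the paper's proof is exactly your ``hands-on'' version, pasting the universal property of $(p, X \otimes Y)$ from Lemma~\ref{lem:SCatw_tensor_main} with the universal property of the unit $\eta$ of $F \dashv U$ to factor any $\Sigma$-bilinear $f \colon X \times Y \to U(Z)$ uniquely through $\eta \circ p$. Your first formulation via representability is just a repackaging of the same two universal properties, so there is no substantive difference.
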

\begin{proof}
    Let $X,Y \in \SCat{w}$ and $Z \in \SCat{*}$ and consider an arbitrary $\Sigma$-bilinear function $f \colon X \times Y \to U(Z)$.
    The following diagram commutes in $\Set$:
    \[\begin{tikzcd}
        {UF(X \otimes Y)} && {U(Z)} \\
        {X \otimes Y} \\
        {X \times Y}
        \arrow[dashed, from=1-1, to=1-3]
        \arrow["\eta", from=2-1, to=1-1]
        \arrow["{\bar{f}}"{pos=0.4}, dashed, from=2-1, to=1-3]
        \arrow["f"', from=3-1, to=1-3]
        \arrow["p", from=3-1, to=2-1]
    \end{tikzcd}\]
    The lower triangle commutes due to the universal property of the tensor product in $\SCat{w}$ and the upper triangle commutes due to $F \dashv U$.
    Recall that $\bar{f}$ is the unique $\Sigma$-homomorphism making the diagram commute and, by adjunction, the $\Sigma$-homomorphism $F(X \otimes Y) \to Z$ is also uniquely determined.
    Since this argument applies to any $\Sigma$-bilinear function $f \colon X \times Y \to U(Z)$, the proof is complete.
\end{proof}

In particular, for $X, Y \in \SCat{*}$ we have that $(\eta \circ p,\, F(UX \otimes UY))$ is an initial object of the comma category \(\comma{\{\bullet\}}{\SCat{w}^{UX,UY} \circ U}\) or, equivalently $\SCat{*}(F(UX \otimes UY), Z) \iso \SCat{*}^{X,Y}(Z)$. This implies that $F(UX \otimes UY)$ is the tensor product of $\Sigma$-monoids in the category $\SCat{*}$, and it suggests that the monoidal structure in $\SCat{*}$ can be constructed from that of $\SCat{w}$. Indeed, the following results formalise this.

\begin{lemma} \label{lem:strong_F}
For any $\SCat{*}$ subcategory of $\SCat{w}$ and objects $X, Y \in \SCat{*}$, define the object $X \otimes' Y$ as $F(UX \otimes UY)$. There is an isomorphism in $\SCat{*}$ between the following objects:
\begin{equation*}
    F(X \otimes Y) \iso F(X) \otimes' F(Y).
\end{equation*}
\end{lemma}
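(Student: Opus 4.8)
The plan is a Yoneda argument resting on Lemma~\ref{lem:F(X,Y)}. That lemma identifies, naturally in $Z \in \SCat{*}$, the set $\SCat{*}(F(X \otimes Y), Z)$ with the set of $\Sigma$-bilinear functions $X \times Y \to UZ$; applied instead to the pair $(UFX, UFY)$ it identifies $\SCat{*}(F(X) \otimes' F(Y), Z) = \SCat{*}(F(UFX \otimes UFY), Z)$ with the set of $\Sigma$-bilinear functions $UFX \times UFY \to UZ$. So it suffices to produce a bijection between these two sets of bilinear functions, natural in $Z$; the Yoneda lemma in $\SCat{*}$ then transports it to the desired isomorphism $F(X \otimes Y) \iso F(X) \otimes' F(Y)$. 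The natural candidate is restriction along the $\Sigma$-homomorphism $\eta_X \times \eta_Y \colon X \times Y \to UFX \times UFY$, which lands in $\Sigma$-bilinear functions because precomposition by a $\Sigma$-homomorphism preserves $\Sigma$-bilinearity.

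Injectivity of this restriction is immediate: since each embedding $U$ is full and faithful and $F \dashv U$ (Theorem~\ref{thm:embedding_right_adjoint}), precomposition with the unit $\eta_W \colon W \to UFW$ is a bijection $\SCat{w}(UFW, UZ) \to \SCat{w}(W, UZ)$ for every $W \in \SCat{w}$ and $Z \in \SCat{*}$; applying this one variable at a time shows a $\Sigma$-bilinear function $UFX \times UFY \to UZ$ is determined by its restriction to $X \times Y$. The real content is surjectivity: every $\Sigma$-bilinear $f \colon X \times Y \to UZ$ extends to a $\Sigma$-bilinear map on $UFX \times UFY$. Extending $f(-,y)$ along $\eta_X$ for each $y$ is another use of the adjunction, but to make the extension $\Sigma$-bilinear one needs this extension operation to be compatible with summation in the remaining variable. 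Currying through the internal hom of $\SCat{w}$ (Lemma~\ref{lem:SCat_internal_hom_main}), this is precisely the assertion that restriction along $\eta_W$ is not merely a bijection but a $\Sigma$-isomorphism $[UFW, UZ] \to [W, UZ]$.

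The key ingredient, which I would prove first as a preliminary lemma, is the \emph{exponential ideal} property: for every $W \in \SCat{w}$ and $Z \in \SCat{*}$, the internal hom $[W, UZ]$ of $\SCat{w}$ already satisfies the extra axioms of $\SCat{*}$, hence lies in $\SCat{*}$. This is verified on $[W, UZ]$ pointwise, each axiom reducing to the corresponding one of $UZ$: for $\SCat{ft}$ and $\SCat{g}$ it suffices that a finite family of $\Sigma$-homomorphisms has a pointwise sum that is again a $\Sigma$-homomorphism (using flattening of $UZ$ over a finite index set and bracketing over the finite fibres), plus pointwise negation for $\SCat{g}$; for $\SCat{s}$ one uses subsummability and the strong bracketing and flattening of $UZ$. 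Granting this, for $W \in \SCat{w}$, $Z \in \SCat{*}$ and arbitrary $V \in \SCat{w}$, closedness and symmetry of $\SCat{w}$ (Proposition~\ref{prop:SCatw_SMCC}) together with the adjunction and fullness of $U$ give $\SCat{w}(V, [UFW, UZ]) \iso \SCat{w}(UFW, [V, UZ]) \iso \SCat{w}(W, [V, UZ]) \iso \SCat{w}(V, [W, UZ])$, the middle isomorphism because $[V, UZ] \in \SCat{*}$; Yoneda in $\SCat{w}$ then gives $[UFW, UZ] \iso [W, UZ]$. Plugging $[UFY, UZ] \iso [Y, UZ]$ into $\SCat{*}(F(UFX \otimes UFY), Z) \iso \SCat{w}(UFX, [UFY, UZ]) \iso \SCat{w}(UFX, [Y, UZ]) \iso \SCat{w}(X, [Y, UZ]) \iso \SCat{*}(F(X \otimes Y), Z)$ --- the penultimate step again using $[Y, UZ] \in \SCat{*}$ --- yields the required natural isomorphism, and Yoneda in $\SCat{*}$ finishes.

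I expect the exponential ideal property to be the main obstacle. The subtle point inside it is the one that forces $Z$ to lie in $\SCat{*}$ rather than merely $\SCat{w}$: the pointwise sum of a family of $\Sigma$-homomorphisms, re-indexed along a summable family in the domain, becomes a doubly-indexed family whose two iterated summations must coincide, a Fubini-type exchange that fails in a general weak $\Sigma$-monoid but holds in $UZ$ --- from the strong axioms in the $\SCat{s}$ case, and, after reducing to finitely many homomorphisms, from flattening over a finite index set in the $\SCat{ft}$ and $\SCat{g}$ cases. The statement is trivial for $\SCat{*} = \SCat{w}$, where $F$ is the identity and $\otimes' = \otimes$.
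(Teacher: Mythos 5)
Your proposal is correct and shares its overall skeleton with the paper's proof: both reduce the claim, via Lemma~\ref{lem:F(X,Y)}, to a chain of natural isomorphisms through the symmetric monoidal closed structure of $\SCat{w}$ whose only non-formal link is $[Y,U(Z)] \iso [UF(Y),U(Z)]$, and both finish with Yoneda in $\SCat{*}$. Where you genuinely diverge is in how that crux is established. The paper obtains a bijection of underlying sets from $F \dashv U$ being a reflection and then asserts the two $\Sigma$-structures agree because $\Sigma^\to$ in Lemma~\ref{lem:SCat_internal_hom_main} ``only depends on the codomain''; this is terse about the harder direction, namely that summability of the restricted family $\{g_i \circ \eta_Y\}$ in $[Y,U(Z)]$ forces the pointwise sum of $\{g_i\}$ to exist and be a $\Sigma$-homomorphism on all of $UF(Y)$. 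You instead isolate the exponential-ideal property $[W,U(Z)] \in \SCat{*}$ for $Z \in \SCat{*}$ and derive $[UF(W),U(Z)] \iso [W,U(Z)]$ by representability inside $\SCat{w}$, which produces a genuine isomorphism in $\SCat{w}$ without ever computing the $\Sigma^\to$ structure directly; the price is the extra pointwise verification of the $\SCat{*}$-axioms on internal homs (your sketch of it, using flattening/bracketing over finite index sets for $\SCat{ft}$ and $\SCat{g}$ and the strong axioms for $\SCat{s}$, is sound). That preliminary lemma is also exactly the hypothesis of Day's reflection theorem, which the paper invokes anyway in Theorem~\ref{thm:SCat*_SMCC} to get closedness of $\SCat{*}$, so your route arguably unifies the two steps and is the more robust of the two arguments; the paper's is shorter but leaves the converse direction of the key bijection implicit.
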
 \begin{proof}
    Thanks to Lemma~\ref{lem:F(X,Y)}, we know that for any $Z \in \SCat{*}$ there are isomorphisms
    \begin{align*}
        \SCat{*}(F(X \otimes Y), Z) &\iso \SCat{w}^{X,Y}(U(Z))  \\
        \SCat{*}(F(X) \otimes' F(Y), Z) &\iso \SCat{w}^{UF(X),UF(Y)}(U(Z))
    \end{align*}
    where, for the second one, we have also used the definition of $\otimes'$.
    Consider the following chain of isomorphisms:
    \begin{align*}
        \SCat{w}^{X,Y}(U(Z)) &\iso \SCat{w}(X \otimes Y,\, U(Z)) &&\text{(def. $\otimes$)} \\
          &\iso \SCat{w}(X,\, [Y, U(Z)]) &&\text{(\SCat{w} mon. closed)} \\
          &\iso \SCat{w}(X,\, [UF(Y), U(Z)]) &&\text{($F \dashv U$ reflection*)} \\
          &\iso \SCat{w}(UF(Y),\, [X, U(Z)]) &&\text{(\SCat{w} sym. mon. closed)} \\
          &\iso \SCat{w}(UF(Y),\, [UF(X), U(Z)]) &&\text{($F \dashv U$ reflection*)} \\
          &\iso \SCat{w}(UF(X) \otimes UF(Y),\, U(Z)) &&\text{(\SCat{w} sym. mon. closed)} \\
          &\iso \SCat{w}^{UF(X),UF(Y)}(U(Z)) &&\text{(def. $\otimes$)}
    \end{align*}
    where the only non-trivial steps (*) are those where we use that $F \dashv U$ is a reflection.
    In more detail: the adjunction imposes $\SCat{w}(Y, U(Z)) \iso \SCat{*}(F(Y), Z)$ and since $U$ is full and faithful, there is an isomorphism $\SCat{*}(F(Y), Z) \iso \SCat{w}(UF(Y), U(Z))$.
    Thus, we have a bijective function between the underlying sets of $[Y, U(Z)], [UF(Y), U(Z)] \in \SCat{w}$ and, since the definition of $\Sigma^{\to}$ in Lemma~\ref{lem:SCat_internal_hom_main} only depends on the $\Sigma$-monoid in the codomain --- which is $U(Z)$ for both of these --- we conclude that $[Y, U(Z)] \iso [UF(Y), U(Z)]$ in $\SCat{w}$, which is what we require in the steps above. 
    
    Notice that all of these isomorphisms are natural isomorphisms on functors $\SCat{*} \to \Set$, where so far we have been applying those functors to $Z \in \SCat{*}$.
    Thus, we have shown the existence of a natural isomorphism $\SCat{*}(F(X \otimes Y), -) \iso \SCat{*}(F(X) \otimes' F(Y), -)$ and it follows from the Yoneda embedding that $F(X \otimes Y) \iso F(X) \otimes' F(Y)$, completing the proof.
\end{proof}

\begin{theorem} \label{thm:SCat*_SMCC}
    Each $\SCat{*}$ category is symmetric monoidal closed using the tensor product as its monoidal product.
    The left adjoint functors of the embeddings between $\SCat{*}$ categories are strong symmetric monoidal.
\end{theorem}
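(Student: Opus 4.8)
The plan is to deduce the whole statement from the fact that each $\SCat{*}$ is a reflective full subcategory of the symmetric monoidal closed category $\SCat{w}$ (Proposition~\ref{prop:SCatw_SMCC} and Theorem~\ref{thm:embedding_right_adjoint}), using Day's reflection theorem~\cite{Day}: if $U\colon\mathcal{D}\hookrightarrow\mathcal{V}$ is a reflective full subcategory with reflector $F$ and unit $\eta$, and $\mathcal{V}$ is symmetric monoidal closed, and moreover $[Y,UZ]_\mathcal{V}$ lies in the essential image of $\mathcal{D}$ whenever $Z\in\mathcal{D}$, then $\mathcal{D}$ is symmetric monoidal closed with monoidal product $F(U(-)\otimes U(-))$, unit $F(I)$, and internal hom inherited from $\mathcal{V}$, and $F$ is strong symmetric monoidal. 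Most pieces are already built: the tensor $\otimes'=F(U(-)\otimes U(-))$ and its $\Sigma$-bilinear universal property are Lemma~\ref{lem:strong_F} and the remark following it, and the proof of Lemma~\ref{lem:strong_F} already records that the $\Sigma$-monoid $[Y,UZ]$ of Lemma~\ref{lem:SCat_internal_hom_main} depends only on the codomain $UZ$. So the single remaining ingredient --- which, being what drives Day's theorem, also lets one assemble the monoidal structure and strong monoidality of $F$ by hand from the lemmas above --- is the proviso that hom-objects land in the subcategory.

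First I would prove the proviso for the three embeddings into $\SCat{w}$: that $[Y,Z]$ with its pointwise $\Sigma^\to$ lies in $\SCat{s}$ (resp.\ $\SCat{ft}$, $\SCat{g}$) whenever $Z$ does, for arbitrary $Y\in\SCat{w}$. Since $\Sigma^\to$ is computed coordinatewise, each case transports an axiom from $Z$. For $\SCat{ft}$: a finite family $\{f_1,\dots,f_n\}$ of $\Sigma$-homomorphisms yields $s$ with $s(x)=f_1(x)+\dots+f_n(x)$, defined because $Z$ is finitely total, and $s$ is a $\Sigma$-homomorphism because for summable $\fml{x}=\{x_k\}_K$ one rearranges the doubly indexed family $\{f_i(x_k)\}_{(i,k)}$ --- flattening along the finite axis $\{1,\dots,n\}$ to reach it from $\{f_i(x)\}_{i}$, then bracketing with the finite columns $\{f_i(x_k)\}_{i=1}^n$ to reach $\{s(x_k)\}_K$; hence $[Y,Z]$ is finitely total. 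For $\SCat{g}$ one adds that $-f=(-)\circ f$ is again a $\Sigma$-homomorphism (a composite of such), that $\Sigma^\to\{f,-f\}$ is the constant map $0$, and that $\fml{f}\mapsto-\fml{f}$ is a $\Sigma$-homomorphism because inversion in $Z$ is one; so $[Y,Z]$ is a $\Sigma$-group. For $\SCat{s}$ the claim is essentially Hoshino's~\cite{RTUDC}: subsummability, strong bracketing and strong flattening for $[Y,Z]$ all follow coordinatewise, the only delicate point being that one must rearrange a doubly indexed family in which \emph{both} axes may be infinite, which is precisely what strong bracketing and strong flattening in $Z$ allow.

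With the proviso in hand, Day's reflection theorem gives at once that $\SCat{s}$, $\SCat{ft}$ and $\SCat{g}$ are symmetric monoidal closed, with monoidal product $\otimes'$ (agreeing with Lemma~\ref{lem:strong_F}), internal hom $[U(-),U(-)]$, and unit $F(I)$ --- which is $I$ itself for $\SCat{s}$, as $I\in\SCat{s}$, but a genuinely different object for $\SCat{ft}$ and $\SCat{g}$, since $\{1,1\}$ is not summable in $I$ so $I\notin\SCat{ft}$ --- and that each reflector $F\colon\SCat{w}\to\SCat{*}$ is strong symmetric monoidal. For the remaining embedding $\SCat{g}\hookrightarrow\SCat{ft}$ I would apply the theorem once more, now inside the symmetric monoidal closed category $\SCat{ft}$, whose internal hom is still computed as in $\SCat{w}$; so its proviso is the special case of the $\SCat{g}$ verification above with $Y$ coming from $\SCat{ft}$, and one concludes that the reflector $\SCat{ft}\to\SCat{g}$ is strong symmetric monoidal. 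Compatibility of all of these with composites of embeddings (\eg{} $\SCat{g}\hookrightarrow\SCat{w}$) is automatic, as composites of left adjoints are left adjoints and composites of strong symmetric monoidal functors are strong symmetric monoidal. The main obstacle is the proviso, and within it the strong case: pushing the double-reindexing argument through with no finiteness restriction on either axis is the genuinely delicate step, and it is where Hoshino's treatment is needed.
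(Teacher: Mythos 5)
Your proposal is correct, but it routes through Day's reflection theorem differently from the paper. The paper builds the symmetric monoidal structure on $\SCat{*}$ \emph{by hand}: it transports unitors, associator and braiding along the reflector using the isomorphism $F(X \otimes Y) \iso F(X) \otimes' F(Y)$ of Lemma~\ref{lem:strong_F} (itself proved by a Yoneda/hom-set chain whose only non-formal input is that $\Sigma^\to$ on $[Y,U(Z)]$ depends only on the codomain, giving $[Y,U(Z)] \iso [UF(Y),U(Z)]$ in $\SCat{w}$), declares $F$ strong symmetric monoidal by construction, and invokes Day only at the very end to obtain closedness. You instead verify Day's hypothesis in the form ``$[Y,U(Z)]$ lands in $\SCat{*}$ whenever $Z$ does'' by direct axiom-checking on the pointwise $\Sigma^\to$, and then extract the entire package --- monoidal structure, closedness, and strong monoidality of the reflectors --- from the theorem in one shot. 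Your route demands a strictly stronger lemma than the paper needs (the paper never shows $[Y,U(Z)]$ itself lies in $\SCat{s}$, $\SCat{ft}$ or $\SCat{g}$; it only needs the reindexing isomorphism above), and the $\SCat{s}$ case of that lemma is genuinely the delicate point you identify, resting on an unrestricted double-reindexing via strong bracketing, strong flattening and subsummability in the codomain; your $\SCat{ft}$ and $\SCat{g}$ verifications are correct as sketched. What your approach buys is uniformity and a cleaner treatment of the reflector $\SCat{ft} \to \SCat{g}$, which you handle explicitly by running Day again inside $\SCat{ft}$ and which the paper's proof (phrased only for reflectors out of $\SCat{w}$) passes over in silence. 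One small slip: the universal property of $\otimes'$ you cite is Lemma~\ref{lem:F(X,Y)} and the remark following it, not Lemma~\ref{lem:strong_F}; this does not affect the argument.
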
 \begin{proof}
    The claim for $\SCat{w}$ has already been proven in Proposition~\ref{prop:SCatw_SMCC}.
    For each $\SCat{*}$ subcategory of $\SCat{w}$, use $X \otimes' Y$ from Lemma~\ref{lem:strong_F} as the monoidal product and $F(I)$ as the monoidal unit. For any $X \in \SCat{w}$, the component of the left unitor $\lambda'_{F(X)} \colon F(I) \otimes' F(X)$ is defined as $F(\lambda_X)$ conjugated with the isomorphism from Lemma~\ref{lem:strong_F}; for any other object $Y \in \SCat{*}$ the component $\lambda'_Y$ is defined in terms of $\lambda'_{FU(Y)}$, using that the counit is an isomorphism. The right unitor, associator and symmetric braiding are defined similarly. This makes $(\SCat{*}, \otimes', F(I))$ a symmetric monoidal category, with commutation of its coherence diagrams being a direct consequence of commutation of the same diagrams in $(\SCat{w}, \otimes, I)$. The functor $F$ is strong symmetric monoidal by definition.
    It only remains to prove that $(\SCat{*}, \otimes', F(I))$ is monoidal closed, but this follows from the fact that $F$ is strong symmetric monoidal, due to a well-known consequence of Day's reflection theorem~\cite{Day}.
\end{proof}

\section{Hausdorff commutative monoids are \(\Sigma\)-monoids}
\label{sec:topology}

As promised in Section~\ref{sec:SCat}, every Hausdorff commutative monoid \((X,\tau,+)\) can be given a partial function \(\Sigma \colon X^* \pto X\) such that \((X,\Sigma)\) is a finitely total \(\Sigma\)-monoid whose summable families are precisely those that converge according to the topology $\tau$.

\begin{definition} \label{def:topological_Sigma}
  Let \((X,\tau,+)\) be a Hausdorff commutative monoid. For any family \(\fml{x} \in X^*\), let \((\fpset{\fml{x}},\subseteq)\) be the directed set of finite subfamilies of \(\fml{x}\) and let \(\sigma \colon \fpset{\fml{x}} \to X\) be the net that maps each finite subfamily \(\fml{x'} \in \fpset{\fml{x}}\) to the sum of its elements, obtained after a finite number of applications of \(+\).
  The \emph{extended monoid operation} \(\Sigma \colon X^* \pto X\) is defined as follows:
  \begin{equation*}
    \Sigma \fml{x} = \begin{cases}
      x \quad &\ifc \exists x \in X \colon \lim \sigma = x \\
      \undefined &\otherwise.
    \end{cases}
  \end{equation*}
\end{definition}

We now show that the extended monoid operation satisfies the axioms of finitely total \(\Sigma\)-monoids.

\begin{theorem} \label{prop:wSm_Hausdorff}
  If \((X,\tau,+)\) is a Hausdorff commutative monoid, and \(\Sigma\) its extended monoid operation, then \((X,\Sigma)\) is a finitely total \(\Sigma\)-monoid.
\end{theorem}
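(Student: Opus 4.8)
The strategy is to verify each axiom of a finitely total $\Sigma$-monoid for $(X,\Sigma)$ in turn, translating each combinatorial statement about families into a statement about convergence of the associated nets $\sigma$ of finite partial sums, and then invoking standard facts about limits in Hausdorff spaces (uniqueness of limits, continuity of $+$, and Proposition~\ref{prop:subnet_limit} on subnets). Throughout I write $\sigma_{\fml{x}} \colon \fpset{\fml{x}} \to X$ for the net of finite-subfamily sums of $\fml{x}$.

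\textbf{Easy axioms first.} The \emph{singleton} axiom is immediate: $\fpset{\{x\}} = \{\emptyset, \{x\}\}$ has a maximum, so the net stabilises at $x$. Similarly $\fpset{\emptyset} = \{\emptyset\}$ gives $\Sigma\emptyset = 0$, establishing the first half of the \emph{neutral element} axiom with $0$ the monoid identity; \emph{finite totality} holds because for a finite family $\fml{x}$ the directed set $\fpset{\fml{x}}$ has $\fml{x}$ itself as a maximum, so $\lim\sigma_{\fml{x}}$ exists and equals the finite sum. For the second half of the neutral element axiom, if $\fml{x}$ is summable with subfamily $\fml{x}_\emptyset$ of non-zero elements, I would exhibit $\sigma_{\fml{x}_\emptyset}$ as (limit-equivalent to) a subnet of $\sigma_{\fml{x}}$: the map $\fpset{\fml{x}_\emptyset} \to \fpset{\fml{x}}$ is the inclusion, which is isotone with cofinal image (any finite subfamily of $\fml{x}$ is dominated by the one obtained by throwing in all its non-zero... wait, by adjoining enough non-zero elements — more carefully, given a finite $\fml{x}' \subseteq \fml{x}$, the finite subfamily $\fml{x}' \cap \fml{x}_\emptyset \in \fpset{\fml{x}_\emptyset}$ maps to something whose sum agrees with that of $\fml{x}'$ since the discarded elements are $0$), so the sums agree and Proposition~\ref{prop:subnet_limit} gives the conclusion. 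Here I must be a little careful that the relevant map genuinely is a cofinal isotone map and that partial sums are unaffected by dropping zeros, using that $0$ is the monoid identity.

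\textbf{Bracketing and flattening.} For \emph{bracketing}, suppose $\{\fml{x}_j\}_J$ are finite summable families, $\fml{x} = \uplus_J \fml{x}_j$, and $\Sigma\fml{x} = x$. Each $\Sigma\fml{x}_j$ is the finite sum $s_j$ of $\fml{x}_j$. I want $\lim \sigma_{\{s_j\}_J} = x$. The key is that a finite subfamily $\{s_j\}_{j \in J'}$ (with $J' \subseteq J$ finite) has sum equal to the sum of the finite subfamily $\uplus_{j\in J'}\fml{x}_j$ of $\fml{x}$ (here finiteness of each $\fml{x}_j$ is exactly what guarantees $\uplus_{j\in J'}\fml{x}_j$ is a \emph{finite} subfamily of $\fml{x}$, hence an element of $\fpset{\fml{x}}$). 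So $\sigma_{\{s_j\}_J}$ factors through $\sigma_{\fml{x}}$ via the isotone map $J' \mapsto \uplus_{j\in J'}\fml{x}_j$, whose image is cofinal in $\fpset{\fml{x}}$ — wait, it need not be cofinal, since a finite subfamily of $\fml{x}$ may split a block $\fml{x}_j$. However, every finite subfamily of $\fml{x}$ \emph{is dominated by} one of the form $\uplus_{j\in J'}\fml{x}_j$ (take $J'$ to be the finite set of indices touched), so the image is cofinal after all. Thus $\sigma_{\{s_j\}_J}$ is a subnet of $\sigma_{\fml{x}}$ and Proposition~\ref{prop:subnet_limit} applies. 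For \emph{flattening} with $J$ finite: given $\Sigma\{\Sigma\fml{x}_j\}_J = x$, note $\{\Sigma\fml{x}_j\}_J$ is a finite family so $x = \sum_{j\in J} \Sigma\fml{x}_j$ is a finite sum of the limits $\lim\sigma_{\fml{x}_j}$; by continuity of $+$ (iterated finitely many times, using $J$ finite) this equals $\lim$ of the product net $\prod_{j\in J}\sigma_{\fml{x}_j}$ over $\prod_{j\in J}\fpset{\fml{x}_j}$, and I would identify this product net as a subnet of (equivalently, cofinal in, via the disjoint-union map $(\fml{x}'_j)_{j\in J} \mapsto \uplus_j \fml{x}'_j$) the net $\sigma_{\fml{x}}$, so $\lim\sigma_{\fml{x}} = x$. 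The map is isotone and its image — finite subfamilies of $\fml{x}$ respecting the partition — is cofinal because any finite $\fml{x}' \subseteq \fml{x}$ refines to $\uplus_j(\fml{x}' \cap \fml{x}_j)$ which lies in the image and dominates $\fml{x}'$.

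\textbf{Main obstacle.} The substance of the argument is entirely in the flattening step, and specifically in the claim that the limit of a product of finitely many convergent nets (computed via $+$) equals the limit of the "merged" net of partial sums over $\fpset{\fml{x}}$. The subtlety is that convergence of $\sigma_{\fml{x}_j}$ for each $j$ does not a priori say anything about convergence of $\sigma_{\fml{x}}$ for the union — one genuinely needs the finiteness of $J$ together with continuity of $+$ to push the limit through, and one must check the cofinality/subnet bookkeeping carefully (the "merged" net is a subnet of $\sigma_{\fml{x}}$ whose limit we are trying to compute, going the opposite direction to Proposition~\ref{prop:subnet_limit}, so I actually need the product net to be cofinal \emph{in} $\fpset{\fml{x}}$ and then argue that $\sigma_{\fml{x}}$, being a net that we are asserting converges, has the same limit as this cofinal subnet — which requires knowing the subnet converges and then that $\sigma_{\fml{x}}$ converges to the same point; the clean way is: the product net converges to $x$ by continuity of $+$, it is cofinal in $\fpset{\fml{x}}$, and a net converges to $x$ iff it converges and some — hence every — cofinal subnet does, \emph{provided} we already know $\sigma_{\fml{x}}$ converges, which we do not). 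So the genuinely careful point is to show directly from the $\varepsilon$-style definition of $\lim$ that $\lim\sigma_{\fml{x}} = x$: given an open $U \ni x$, use continuity of iterated $+$ to find opens $U_j \ni \Sigma\fml{x}_j$ with $U_1 + \cdots + U_{|J|} \subseteq U$, pick $\fml{a}_j \in \fpset{\fml{x}_j}$ witnessing $\sigma_{\fml{x}_j}$-convergence into $U_j$, and take $\fml{a} = \uplus_j \fml{a}_j \in \fpset{\fml{x}}$; then for any finite $\fml{x}' \supseteq \fml{a}$ in $\fpset{\fml{x}}$, each $\fml{x}' \cap \fml{x}_j \supseteq \fml{a}_j$, so the sum of $\fml{x}'$ (which splits as the sum over the $j$'s since $J$ is finite) lands in $U$. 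This direct argument sidesteps the subnet-direction issue and is where I would concentrate the rigour.
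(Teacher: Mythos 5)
Your proposal is correct and follows essentially the same route as the paper: finite totality, singleton and neutral element are read off the nets directly, bracketing is handled by exhibiting the net of block-sums as a subnet of $\sigma_{\fml{x}}$ via $J' \mapsto \uplus_{J'}\fml{x}_j$ (cofinal precisely because each block is finite) and invoking Proposition~\ref{prop:subnet_limit}, and flattening uses continuity of $+$ with finiteness of $J$. The paper packages the flattening step by re-indexing each $\sigma_j$ onto the common domain $\fpset{\uplus_J \fml{x}_j}$ via $\fml{x}' \mapsto \fml{x}' \cap \fml{x}_j$ so that the pointwise sum of these subnets is literally $\sigma_{\fml{x}}$; your direct open-set argument is the unfolded version of exactly that computation, and your worry about applying the subnet proposition in the wrong direction is the right concern, correctly resolved.
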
 \begin{proof}
  It is immediate from the definition of the extended monoid operation that every finite family \(\fml{x} \in X^*\) is summable: \(\Sigma \fml{x} \keq \sigma(\fml{x})\).
  In particular, the singleton axiom of weak \(\Sigma\)-monoids follows immediately from \(\Sigma \{x\} \keq x\), and furthermore \(\Sigma \emptyset \keq 0\) where \(0\) is the neutral element of the monoid \(X\).
  Hence the neutral element axiom of weak \(\Sigma\)-monoids is also straightforward to check: the number of occurrences of \(0\) in a family \(\fml{x}\) does not affect the limiting behaviour of \(\sigma\).

  For the flattening axiom of weak \(\Sigma\)-monoids, first consider the simpler case \(\abs{J} = 2\): let \(\fml{x}_0, \fml{x}_1 \in X^*\) be two summable families and assume \(\Sigma \{\Sigma \fml{x}_0, \Sigma \fml{x}_1\} \keq x\) for some \(x \in X\). That \(\fml{x}_0\) and \(\fml{x}_1\) are summable implies that their corresponding nets \(\sigma_0 \colon \fpset{\fml{x}_0} \to X\) and \(\sigma_1 \colon \fpset{\fml{x}_1} \to X\) have a limit point. It follows from the assumption that \(\lim \sigma_0 + \lim \sigma_1 = x\) but, unfortunately, we cannot immediately use that \(+\) is continuous due to \(\sigma_0\) and \(\sigma_1\) being defined on different domains. However, we can define a subnet \(\hat{\sigma}_0 \colon \fpset{\fml{x}_0 \uplus \fml{x}_1} \to X\) as \(\hat{\sigma}_0 = \sigma_0 \circ m_0\) where \(m_0(\fml{x'}) = \fml{x'} \cap \fml{x}_0\); it follows from Proposition~\ref{prop:subnet_limit} that
  \begin{equation*}
    \lim \hat{\sigma}_0 + \lim \hat{\sigma}_1 \keq x.
  \end{equation*}
  Since both \(\hat{\sigma}_0\) and \(\hat{\sigma}_1\) have the same domain, continuity of \(+\) implies
  \begin{equation*}
    \lim\,(\hat{\sigma}_0 + \hat{\sigma}_1) \keq x
  \end{equation*}
  and, due to associativity and commutativity of \(+\), the net \(\hat{\sigma}_0 + \hat{\sigma}_1\) is the same as the net \(\sigma\) on the family \(\fml{x}_0 \uplus \fml{x}_1\), implying \(\Sigma (\fml{x}_0 \uplus \fml{x}_1) \keq x\).
  In the general case, the flattening axiom considers a finite set of summable families \(\{\fml{x}_j\}_J\). If \(\abs{J} > 2\) we may repeat the argument above as many times as necessary to aggregate all subfamilies.

  Finally, the bracketing axiom of weak \(\Sigma\)-monoids requires that, given a family \(\fml{x} \in X^*\) partitioned into a set of finite families \(\fml{x} = \uplus_J \fml{x}_j\) such that \(\Sigma \fml{x}_j \keq x_j\), we show that \(\Sigma \fml{x} \keq x\) implies \(\Sigma \{x_j\}_J \keq x\).
  Let \(\hat{\sigma} \colon \fpset{J} \to X\) be the net that maps each finite subset \(J' \subseteq J\) to the sum of the finite family \(\{x_j\}_{J'}\).
  Assume that \(\Sigma \fml{x} \keq x\) and let \(\sigma \colon \fpset{\fml{x}} \to X\) be its corresponding net, satisfying \(\lim \sigma \keq x\).
  Notice that \(\Sigma \{x_j\}_J \keq \lim \hat{\sigma}\) so our goal is to show that \(\lim \sigma = x\) implies \(\lim \hat{\sigma} = x\).
  Define a function \(m \colon \fpset{J} \to \fpset{\fml{x}}\) as follows:
  \begin{equation*}
    m(J') = \uplus_{J'} \fml{x}_j.
  \end{equation*}
  It is straightforward to check that \(\hat{\sigma} = \sigma \circ m\), making \(\hat{\sigma}\) a subnet of \(\sigma\).
  It then follows from Proposition~\ref{prop:subnet_limit} that \(\lim \hat{\sigma} = x\).

  Thus, we have shown that \((X,\Sigma)\) is a weak \(\Sigma\)-monoid and, in particular, a finitely total \(\Sigma\)-monoid since \(\Sigma \fml{x} \keq \sigma(\fml{x})\) for every finite family \(\fml{x} \in X^*\).
\end{proof}

Continuous monoid homomorphisms between Hausdorff commutative monoids become \(\Sigma\)-homomorphisms between the \(\Sigma\)-monoids induced by their extended monoid operation.

\begin{proposition}
  If \(X\) and \(Y\) are Hausdorff commutative monoids, and \(f \colon X \to Y\) is a continuous monoid homomorphism, then for every \(\fml{x} \in X^*\) and \(x \in X\):
  \begin{equation*}
    \Sigma \fml{x} \keq x \implies \Sigma f\fml{x} \keq f(x)
  \end{equation*}
\end{proposition}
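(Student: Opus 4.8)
The plan is to unfold the definition of the extended monoid operation on both sides and reduce the claim to the elementary fact that continuous maps preserve limits of nets. Assume \(\Sigma \fml{x} \keq x\), and write \(\fml{x} = \{x_i\}_I\). By Definition~\ref{def:topological_Sigma} this means that the net \(\sigma \colon \fpset{\fml{x}} \to X\) sending a finite subfamily to the sum of its elements satisfies \(\lim \sigma = x\). What we must show is that the analogous net \(\sigma' \colon \fpset{f\fml{x}} \to Y\) for the family \(f\fml{x} = \{f(x_i)\}_I\) satisfies \(\lim \sigma' = f(x)\); since \(Y\) is Hausdorff, this is exactly the statement \(\Sigma f\fml{x} \keq f(x)\).

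First I would make precise the comparison between the two index sets. A finite subfamily of \(\fml{x}\) is, by the convention in Definition~\ref{def:subfamily}, of the form \(\{x_i\}_{I'}\) for a finite \(I' \subseteq I\), and likewise a finite subfamily of \(f\fml{x}\) is \(\{f(x_i)\}_{I'}\) for a finite \(I' \subseteq I\); here no ``collapsing'' occurs even when \(f\) fails to be injective, because families are multisets indexed by \(I\). Hence \(I' \mapsto \{x_i\}_{I'}\) and \(I' \mapsto \{f(x_i)\}_{I'}\) exhibit both \(\fpset{\fml{x}}\) and \(\fpset{f\fml{x}}\) as copies of the directed poset of finite subsets of \(I\), and I would use this to identify them by an order isomorphism \(\theta \colon \fpset{\fml{x}} \to \fpset{f\fml{x}}\). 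Because \(f\) is a monoid homomorphism it commutes with finite sums, so \(\sigma'(\theta(\fml{x}')) = f(\sigma(\fml{x}'))\) for every finite \(\fml{x}' \subseteq \fml{x}\); that is, \(\sigma' \circ \theta = f \circ \sigma\). (If one prefers not to invoke the identification directly: \(\sigma' \circ \theta\) is a subnet of \(\sigma'\) and, \(\theta\) being a bijection, \(\sigma'\) is in turn a subnet of \(\sigma' \circ \theta\), so by Proposition~\ref{prop:subnet_limit} the two nets have the same limit points in the Hausdorff space \(Y\).)

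Finally I would invoke continuity: if \(V \subseteq Y\) is open with \(f(x) \in V\), then \(f^{-1}(V)\) is open and contains \(x\), so there is \(a \in \fpset{\fml{x}}\) with \(\{\sigma(b) \mid a \leq b\} \subseteq f^{-1}(V)\), hence \(\{f(\sigma(b)) \mid a \leq b\} \subseteq V\); thus \(\lim (f \circ \sigma) = f(x)\). Combining with the previous step, \(\lim \sigma' = \lim (f \circ \sigma) = f(x)\), which yields \(\Sigma f\fml{x} \keq f(x)\) and completes the argument. I do not expect a real obstacle: the only point needing care is the bookkeeping that identifies \(\fpset{\fml{x}}\) with \(\fpset{f\fml{x}}\) and checks that the net of partial sums transforms as \(\sigma' = f \circ \sigma\) under it; once that is in place the conclusion is immediate from continuity, and, unlike the deeper axioms established in Theorem~\ref{prop:wSm_Hausdorff}, this particular implication needs no further hypothesis on \(Y\) beyond what already makes \(\Sigma\) single-valued.
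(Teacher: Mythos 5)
Your proof is correct and follows essentially the same route as the paper's: assume $\lim \sigma = x$, use continuity to get $\lim (f \circ \sigma) = f(x)$, and identify $f \circ \sigma$ with the net of finite partial sums of $f\fml{x}$ via the fact that $f$ is a monoid homomorphism. The only difference is that you spell out the bookkeeping identifying $\fpset{\fml{x}}$ with $\fpset{f\fml{x}}$ (correctly noting that no collapsing occurs since families are indexed multisets), which the paper dismisses as ``straightforward to check.''
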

\begin{proof}
  Let \(\sigma \colon \fpset{\fml{x}} \to X\) be the net of finite partial sums of \(\fml{x}\) and assume that \(\Sigma \fml{x} \keq x\).
  Then \(\lim \sigma = x\) and, due to \(f\) being continuous, \(\lim\, (f \circ \sigma) = f(x)\).
  Considering that \(f\) is a monoid homomorphism, it is straightforward to check that \(f \circ \sigma\) is the same function as the corresponding net of \(f\fml{x}\).
  It then follows that \(\Sigma f\fml{x} \keq f(x)\), proving the claim.
\end{proof}

\begin{definition}
  Write \(\HausCMon\) for the category whose objects are Hausdorff commutative monoids and whose morphisms are continuous monoid homomorphisms, and \(\HAG\) for the full subcategory whose objects are Hausdorff abelian groups.
\end{definition}

\begin{corollary} \label{cor:HausCMon_ft}
  There is a faithful functor \(G \colon \HausCMon \to \SCat{ft}\) that equips each Hausdorff commutative monoid with the extended monoid operation from Definition~\ref{def:topological_Sigma} and acts as the identity on morphisms.
\end{corollary}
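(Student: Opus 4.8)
The plan is to build $G$ directly from Theorem~\ref{prop:wSm_Hausdorff} and the proposition immediately preceding the definition of $\HausCMon$ that sends continuous monoid homomorphisms to $\Sigma$-homomorphisms. On objects, I would set $G(X,\tau,+) = (X,\Sigma)$, where $\Sigma$ is the extended monoid operation of Definition~\ref{def:topological_Sigma}; Theorem~\ref{prop:wSm_Hausdorff} states that $(X,\Sigma)$ is a finitely total $\Sigma$-monoid, so this assignment genuinely lands in $\SCat{ft}$. On morphisms, given a continuous monoid homomorphism $f\colon X \to Y$, I would take $G(f)$ to be the very same function, now regarded as a map between the underlying sets; the previous proposition shows that $\Sigma\fml{x} \keq x$ implies $\Sigma f\fml{x} \keq f(x)$, which is precisely the condition for $G(f)$ to be a $\Sigma$-homomorphism $G(X) \to G(Y)$.

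Next I would check functoriality. Because $G$ does not alter the underlying function of a morphism, $G(\id_X)$ is the identity function on $X$, hence equal to $\id_{G(X)}$, and $G(g \circ f)$ is the set-theoretic composite $g \circ f$, which coincides with $G(g) \circ G(f)$. Thus $G$ is a functor.

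Finally, faithfulness. For fixed objects $X$ and $Y$, both $\HausCMon(X,Y)$ and $\SCat{ft}(G(X),G(Y))$ are subsets of the set of all functions $X \to Y$ — the continuous monoid homomorphisms being contained in the $\Sigma$-homomorphisms — and the action of $G$ on this hom-set is simply the resulting inclusion. An inclusion of sets is injective, so $G$ is faithful.

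I expect no real obstacle: all the substantive work has already been done in Theorem~\ref{prop:wSm_Hausdorff} and the preceding proposition, and the corollary merely records that these results assemble into a functor which, acting as the identity on morphisms, cannot identify distinct ones. The only point worth a remark is that $G$ is not full in general, since a continuous monoid homomorphism carries strictly more information than its underlying $\Sigma$-homomorphism; but fullness is not part of the statement.
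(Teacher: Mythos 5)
Your proposal is correct and matches the paper's (implicit) argument exactly: the corollary is stated without proof precisely because Theorem~\ref{prop:wSm_Hausdorff} handles objects, the preceding proposition handles morphisms, and functoriality and faithfulness are immediate since $G$ is the identity on underlying functions.
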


\begin{proposition}
  The functor \(G \colon \HausCMon \to \SCat{ft}\) from Corollary~\ref{cor:HausCMon_ft} is not full.
\end{proposition}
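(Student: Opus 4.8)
To show $G$ is not full, I need to exhibit two Hausdorff commutative monoids $X$ and $Y$ together with a $\Sigma$-homomorphism $f\colon G(X)\to G(Y)$ that is \emph{not} continuous (it will automatically be a monoid homomorphism since a $\Sigma$-homomorphism respects finite sums by the singleton and flattening axioms). The natural idea is to take a set $Y$ carrying two different topologies, one finer than the other, in such a way that the induced extended monoid operations coincide; then the identity map between the two versions is a $\Sigma$-homomorphism but not continuous.

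\textbf{First step: find a monoid and two topologies with the same summability.} A convenient candidate is $Y = X = \mathbb{Z}$ with addition. Equip the source with the discrete topology and the target with some non-discrete group topology $\tau$ — for instance the $p$-adic topology for a prime $p$, or the topology generated by the subgroups $n\mathbb{Z}$. In the discrete topology on $\mathbb{Z}$, a net of finite partial sums converges iff it is eventually constant, which for the net $\sigma$ over $\fpset{\fml{x}}$ happens iff $\fml{x}$ has only finitely many nonzero elements; so $\Sigma^{\mathrm{disc}}\fml{x}$ is defined exactly for the cofinitely-zero families. The key computation is then to check that in $(\mathbb{Z},\tau)$ the \emph{same} families are summable and with the same value: one must show that if $\fml{x}$ has infinitely many nonzero terms then $\sigma$ does not converge in $\tau$. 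Concretely, if infinitely many $x_i$ are nonzero, one can find for any putative limit $\ell$ two cofinal directions of finite subfamilies whose partial sums differ by a nonzero integer, and since $\tau$ is Hausdorff these cannot both converge to $\ell$ — so $\lim\sigma$ does not exist. (If $\tau=$ the $p$-adic topology this needs mild care, since $p$-adically convergent series of integers exist; choosing instead the \emph{cofinite} topology's refinement, or better the topology where $0$ has neighbourhood basis $\{n\mathbb{Z}\}_{n\ge 1}$, makes the non-convergence argument cleanest, because partial sums over $\{1,\dots,N\}$ versus $\{1,\dots,N\}\setminus\{k\}$ differ by $x_k\neq 0$ which lies outside $n\mathbb{Z}$ for $n>|x_k|$.) Having established this, the identity map $\id\colon (\mathbb{Z},\mathrm{discrete})\to(\mathbb{Z},\tau)$ satisfies $\Sigma^{\mathrm{disc}}\fml{x}\keq x \implies \Sigma^{\tau}\fml{x}\keq x$ trivially, so it is a $\Sigma$-homomorphism $G(X)\to G(Y)$.

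\textbf{Second step: it is not a morphism in $\HausCMon$.} The identity $(\mathbb{Z},\mathrm{discrete})\to(\mathbb{Z},\tau)$ is continuous, which is the wrong direction; I actually want a $\Sigma$-homomorphism that fails to lie in the image of $G$, i.e. that is not continuous \emph{as a map between the given topological monoids}. So the cleaner packaging is: $G$ being full would mean every $\Sigma$-homomorphism $G(X)\to G(Y)$ is a continuous monoid homomorphism $X\to Y$. Take $X=(\mathbb{Z},\tau)$ and $Y=(\mathbb{Z},\mathrm{discrete})$, and consider $\id\colon X\to Y$. It is a monoid homomorphism and, by the first step, a $\Sigma$-homomorphism $G(X)\to G(Y)$ (same summable families, same values, in both directions). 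But it is not continuous as a map $(\mathbb{Z},\tau)\to(\mathbb{Z},\mathrm{discrete})$, since $\tau$ is strictly coarser than the discrete topology. Hence $\id$ is a morphism $G(X)\to G(Y)$ in $\SCat{ft}$ with no preimage under $G$, so $G$ is not full.

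\textbf{Main obstacle.} The only real work is the summability computation in the second paragraph: verifying that the chosen non-discrete group topology $\tau$ on $\mathbb{Z}$ has the property that a family of integers is $\tau$-summable if and only if it is cofinitely zero. I expect the subgroup topology with neighbourhood basis $\{n\mathbb{Z}\}_{n\ge1}$ at $0$ to make this essentially immediate via the "delete one nonzero term" cofinality argument together with Hausdorffness of $\tau$ and Proposition~\ref{prop:subnet_limit}; one should double-check that this topology is indeed Hausdorff (it is, since $\bigcap_n n\mathbb{Z}=\{0\}$) and that addition is continuous (it is, being a group topology). Everything else — that $\id$ is a monoid homomorphism, that it is a $\Sigma$-homomorphism both ways, and that it is not $\tau$-to-discrete continuous — is routine.
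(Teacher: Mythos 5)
Your overall strategy is exactly the paper's: take one commutative monoid, equip it with two comparable Hausdorff topologies that induce the same extended monoid operation, and observe that the identity from the coarser to the finer space is then a $\Sigma$-homomorphism between equal objects of $\SCat{ft}$ with no continuous preimage. The gap is in the one step you yourself flag as ``the only real work'': the claim that, for the topology $\tau$ on $\Zset$ with neighbourhood basis $\{n\Zset\}_{n\ge 1}$ at $0$, a family of integers is summable iff it is cofinitely zero. This is false. Consider the family consisting of $1$ (indexed by $0$) together with $i\cdot i!$ for all $i\ge 1$. For any modulus $n$, once a finite subfamily contains the indices $0,1,\dots,n$ its partial sum equals $1+\sum_{i=1}^{n} i\cdot i! = (n+1)! \equiv 0 \pmod{n}$, and every further term $i\cdot i!$ with $i>n$ is also divisible by $n$; hence the net of finite partial sums converges to $0$ in $\tau$, although the family has infinitely many nonzero terms and is not summable in the discrete topology. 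Consequently $\id\colon G((\Zset,\tau))\to G((\Zset,\mathrm{discrete}))$ is \emph{not} a $\Sigma$-homomorphism, and your counterexample to fullness collapses. The flaw in your ``delete one nonzero term'' argument is a quantifier inversion: convergence to $\ell$ modulo $n$ only forces all but finitely many terms to be divisible by $n$ (the finitely many exceptions get absorbed into the tail threshold $F_0$), and the family above satisfies this for every $n$ simultaneously, so no contradiction arises.

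The paper sidesteps this precisely by not using a group: it takes the monoid $\{0\}\cup[1,\infty)$ under addition, once with the discrete topology and once with the topology generated by $\{0\}$ and the half-open intervals $[x,x+\epsilon)$. Because every nonzero element is at least $1$, any family with infinitely many nonzero terms has unbounded finite partial sums and therefore cannot converge in either topology, for reasons independent of the fine structure of the topology; the two images under $G$ then genuinely coincide, and the rest of your argument (identity not continuous from the coarser space to the finer one) goes through verbatim. To repair your version you would need a non-discrete Hausdorff group topology on $\Zset$ in which no infinite family of nonzero integers is summable, which is a considerably more delicate requirement than your sketch acknowledges.
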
 \begin{proof}
  Let \(A \in \HausCMon\) have underlying set \(\{0\} \cup [1,\infty)\) along with the discrete topology and standard addition of the real numbers.
  Let \(B \in \HausCMon\) have the same underlying set \(\{0\} \cup [1,\infty)\) along with standard addition, but whose topology is given by the base of open sets comprised of the singleton \(\{0\}\) and every interval \([x, x+\epsilon)\) for each \(x \in [1,\infty)\) and each \(\epsilon > 0\).

  Notice that \(\Sigma\) on both \(G(A)\) and \(G(B)\) is only defined on families with a finite number of nonzero elements. Since both \(G(A)\) and \(G(B)\) have the same summable families, it is immediate that \(G(A) = G(B)\). Consider the identity function on the underlying sets \(\id \colon B \to A\); it is immediate this is a valid morphism \(G(B) \to G(A)\) in \(\SCat{ft}\), and, since \(G\) acts as the identity on morphisms, for \(G\) to be full we would need \(\id \colon B \to A\) to be continuous. However, \(\id \colon B \to A\) is not continuous due to the topology of \(B\) being strictly coarser than that of \(A\).
  Therefore, \(G \colon \HausCMon \to \SCat{ft}\) is not full.
\end{proof}

Similarly, there is a faithful functor \(\HAG \to \SCat{g}\) since its continuity implies that inversion is a \(\Sigma\)-homomorphism. We expect this functor not to be full either.
The following subsection establishes that both functors have a left adjoint.

\subsection{Free Hausdorff commutative monoids on $\Sigma$-monoids}
\label{sec:HausCMon->SCatft}

The functor \(G \colon \HausCMon \to \SCat{ft}\) of Corollary~\ref{cor:HausCMon_ft} has a left adjoint.
An explicit construction has not been achieved --- instead, our proof uses the general adjoint functor theorem~\cite[Theorem~6.3.10]{Leinster}. We need the following preliminary result.

\begin{proposition} \label{prop:HausCMon_complete}
  The category \(\HausCMon\) is complete, and the functor \\ \(G \colon \HausCMon \to \SCat{ft}\) from Corollary~\ref{cor:HausCMon_ft} preserves limits.
\end{proposition}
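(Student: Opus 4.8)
The plan is to establish the two claims separately, treating completeness first and limit-preservation second, since the latter will actually follow from the explicit description of limits built for the former. For completeness, it suffices to exhibit products and equalisers in $\HausCMon$. Given a small family $\{X_k\}_{k \in K}$ of Hausdorff commutative monoids, I would take the underlying set $\prod_k X_k$, equip it with the product monoid structure and the product topology; this is Hausdorff because a product of Hausdorff spaces is Hausdorff, the addition map is continuous because it is continuous coordinatewise, and the projections are continuous monoid homomorphisms, giving the universal cone. For the equaliser of $f, g \colon X \to Y$, I would take $E = \{x \in X \mid f(x) = g(x)\}$ with the subspace topology and the restricted monoid operation; Hausdorffness and continuity of $+$ are inherited from $X$, and $E$ is a submonoid because $f$ and $g$ agree on $0$ and are homomorphisms. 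The universal property in both cases is verified by noting that the unique mediating map of sets is automatically a continuous monoid homomorphism (continuity into a product, resp. into a subspace, is checked componentwise, resp. by composing with the inclusion).

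For the second claim, I would show $G$ preserves these chosen limits, which suffices since $G$ preserves limits iff it preserves products and equalisers, and we have just shown these exist. The key point is that the extended monoid operation $\Sigma$ on a limit object agrees with the $\Sigma$ function that $\SCat{ft}$ assigns to the limit. Concretely, for products: by the construction of products in $\SCat{w}$ recalled in the proof of Proposition~\ref{prop:preserve_limits_chain_colimits} (equation~\eqref{eq:SCat_product}, extended to small products), a family $\fml{p}$ in $\prod_k X_k$ is $\Sigma$-summable in the categorical product iff each $\pi_k \fml{p}$ is summable in $G(X_k)$, with the sum computed coordinatewise. On the other hand, $\fml{p}$ is summable in $G(\prod_k X_k)$ iff the net of finite partial sums converges in the product topology, and convergence in a product topology is exactly coordinatewise convergence; so the two notions of summability coincide and produce the same value. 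For equalisers: by equation~\eqref{eq:SCat_equaliser} the $\SCat{w}$-equaliser carries the restriction $\Sigma^e$ of the $\Sigma$ function of $X$ along the inclusion $e$, while $G(E)$ carries the extended monoid operation of the subspace topology; a net of finite partial sums of a family in $E$ converges in the subspace topology to a point of $E$ iff it converges in $X$ to that point, so again $\Sigma^e$ on $G$-objects matches the extended monoid operation of $E$.

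The main obstacle I anticipate is the product case: one must be careful that the directed set indexing the net of finite partial sums is the set of finite \emph{sub}families of $\fml{p}$, and that ``convergence in the product topology equals coordinatewise convergence'' is applied to this single net rather than to reindexed nets on different directed sets --- but since the net of partial sums of $\pi_k \fml{p}$ is precisely $\pi_k$ composed with the net of partial sums of $\fml{p}$ (the projection being a monoid homomorphism), this is immediate and the subnet subtleties that appeared in the proof of Theorem~\ref{prop:wSm_Hausdorff} do not recur here. A minor point to record is that $G$ acting as the identity on morphisms means the mediating map of the $\SCat{ft}$-cone is literally the mediating map of the $\HausCMon$-cone, so once the underlying $\Sigma$-monoids are shown to coincide the universal properties transport with no further work.
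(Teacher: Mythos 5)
Your proposal is correct and follows essentially the same route as the paper: completeness via the standard subspace-topology equaliser and product-topology product, and limit preservation by matching the extended monoid operation on these limits against the $\Sigma$ functions of~\eqref{eq:SCat_equaliser} and~\eqref{eq:SCat_product}, using that convergence of the net of finite partial sums is coordinatewise in a product and detected through the inclusion in a subspace. Your remark that the net of partial sums of $\pi_k\fml{p}$ is literally $\pi_k$ composed with the net of partial sums of $\fml{p}$ is exactly the observation the paper relies on implicitly.
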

\begin{proof}
  See Appendix~\ref{sec:HausCMon->SCatft_appendix}.
\end{proof}

\begin{theorem} \label{thm:HCMon_ft_adj}
  There is a left adjoint to the functor \(G \colon \HausCMon \to \SCat{ft}\).
\end{theorem}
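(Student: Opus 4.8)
The plan is to invoke the General Adjoint Functor Theorem (GAFT), as stated in~\cite[Theorem~6.3.10]{Leinster}. This requires two ingredients: that $\HausCMon$ is complete and $G$ preserves all small limits, which is exactly Proposition~\ref{prop:HausCMon_complete}; and that $G$ satisfies the solution set condition. So the content of the proof reduces entirely to verifying the solution set condition: for each finitely total $\Sigma$-monoid $(S,\Sigma)$, we must produce a \emph{set} (not a proper class) of pairs $\{(A_i, g_i)\}_{i \in I}$ with $A_i \in \HausCMon$ and $g_i \colon S \to G(A_i)$ a $\Sigma$-homomorphism, such that every $\Sigma$-homomorphism $g \colon S \to G(A)$ factors as $g = G(h) \circ g_i$ for some $i$ and some morphism $h \colon A_i \to A$ in $\HausCMon$.

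The key step is a cardinality bound. Given any $\Sigma$-homomorphism $g \colon S \to G(A)$, consider the image $g(S) \subseteq A$ and let $A' \subseteq A$ be the smallest subobject of $A$ in $\HausCMon$ containing $g(S)$ --- that is, the closure (in the topological sense) of the submonoid generated by $g(S)$, equipped with the subspace topology; one checks this is again a Hausdorff commutative monoid and that $g$ corestricts to a $\Sigma$-homomorphism $S \to G(A')$ (the latter using that $G$ is compatible with taking subobjects, much as in Lemma~\ref{lem:wSm_restriction}, together with the fact that limits of convergent nets land in the closure). The point is then to bound $|A'|$ and the number of possible topologies on it by a cardinal depending only on $|S|$: the underlying monoid of $A'$ is a quotient of the free commutative monoid on $g(S)$ followed by a topological closure, and since every element of $A'$ is a limit of a net of finite partial sums indexed by finite subfamilies of a family in $S^*$ --- and $S^*$ has cardinality bounded in terms of $|S|$ and $|\mathcal{I}|$ by Proposition~\ref{prop:family_set} --- the cardinality of $A'$ is bounded by some $\kappa = \kappa(|S|)$. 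Up to isomorphism there is only a set of Hausdorff commutative monoid structures on sets of cardinality at most $\kappa$, and only a set of $\Sigma$-homomorphisms $S \to G(A')$ for each; collecting these gives the desired solution set, with $h \colon A' \hookrightarrow A$ the inclusion providing the required factorisation.

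The main obstacle is making the cardinality bound on $A'$ rigorous: one has to argue carefully that closing the generated submonoid under limits of the relevant nets does not blow up the cardinality --- naively iterating a closure operation could require transfinitely many steps, but here the key observation is that every point that ever gets added arises as $\lim \sigma$ for $\sigma$ the net of finite partial sums of some family $\fml{s} \in S^*$ mapped through $g$, because finite total summability means the net is determined by a single family in $S^*$ and $\Sigma$-homomorphisms preserve these limits; hence a \emph{single} closure step suffices and $|A'| \le |S^*| \le \kappa$. A secondary point to check is that the corestriction $S \to G(A')$ is well-defined as a $\Sigma$-homomorphism, i.e.\ that $A'$ with the subspace topology is genuinely Hausdorff (inherited) and that its extended monoid operation agrees with the restriction of that of $A$ on families coming from $S$ --- this is where one uses Proposition~\ref{prop:subnet_limit} and the fact that a convergent net in $A$ whose terms lie in the closed set $A'$ has its limit in $A'$. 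Once these are in place, GAFT applies verbatim and yields the left adjoint.
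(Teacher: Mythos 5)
Your overall architecture matches the paper's: apply the general adjoint functor theorem using Proposition~\ref{prop:HausCMon_complete} for completeness and limit preservation, and reduce everything to exhibiting a weakly initial set in \(\comma{X}{G}\) built from ``images'' of \(\Sigma\)-homomorphisms. However, there is a genuine gap in your verification of the solution set condition. You pass to the topological closure \(A'\) of the submonoid generated by \(g(S)\) and then bound \(\abs{A'}\) by claiming that every point of \(A'\) arises as \(\lim \sigma\) for \(\sigma\) the net of finite partial sums of some family in \(S^*\) pushed through \(g\). That claim is false: the closure of a subset of a Hausdorff space consists of the limits of \emph{arbitrary} nets valued in that subset, and such a net need not be the net of partial sums of any countable family; nothing in the axioms forces a general accumulation point of \(g(S)\) to be a value of the extended monoid operation. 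So ``a single closure step suffices and \(\abs{A'} \le \abs{S^*}\)'' is unjustified. (A correct but cruder general-topology bound \(\abs{\overline{D}} \le 2^{2^{\abs{D}}}\) for subsets of Hausdorff spaces would rescue a cardinality argument, but that is not what you wrote.)

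The cleaner fix --- and the route the paper takes --- is to observe that no closure is needed at all. Because \(S\) is finitely total and \(g\) is a \(\Sigma\)-homomorphism, the set-image \(g(S)\) is already a submonoid of \(A\): it contains \(0 = g(\Sigma \emptyset)\) and \(g(a) + g(b) = g(\Sigma \{a,b\})\). Hence \(g(S)\) with the subspace topology is a Hausdorff commutative monoid and the inclusion into \(A\) is a morphism of \(\HausCMon\). The corestriction \(S \to G(g(S))\) is a \(\Sigma\)-homomorphism for free: if \(\Sigma \fml{s} \keq s\) in \(S\), the net of partial sums of \(g\fml{s}\) lies in \(g(S)\) and converges in \(A\) to \(g(s) \in g(S)\), and a net in a subspace converging in the ambient space to a point \emph{of the subspace} converges in the subspace topology. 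The paper implements exactly this by transporting the structure along the bijection with the quotient \(X/{\sim}\), where \(x \sim x'\) iff \(f(x) = f(x')\); this also makes the solution set manifestly small --- it is indexed by Hausdorff commutative monoid structures on quotients of the fixed set \(X\) --- with no cardinality bookkeeping required.
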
 \begin{proof}
  Clearly \(\HausCMon\) is locally small since there is a faithful forgetful functor \(\HausCMon \to \Set\).
  By Proposition~\ref{prop:HausCMon_complete}, \(\HausCMon\) is complete and \(G\) preserves limits.
  It remains to show that, for every \(X \in \SCat{ft}\), the comma category \(\comma{X}{G}\) has a weakly initial set - the existence of a left adjoint then follows from the general adjoint functor theorem~\cite[Theorem~6.3.10]{Leinster}.
  Let \(S\) be the collection of all \(\Sigma\)-homomorphisms \(X \to G(A)\) where \(A\) may be any Hausdorff commutative monoid on a set quotient of \(X\).
  Notice that \(S\) is small since all quantifiers in its definition are with respect to a fixed set \(X\). We prove below that \(S\) is a weakly initial set of the comma category \(\comma{X}{G}\).

  Let \(Y\) be an arbitrary Hausdorff commutative monoid and let \(f \colon X \to G(Y)\) be a \(\Sigma\)-homomorphism.
  Recall that every function can be factored through its image as follows:
  \begin{equation*}
    f = X \xto{q} X/{\sim} \xto{\bar{f}} \im(f) \xto{u} G(Y)
  \end{equation*}
  where \(x \sim x'\) iff \(f(x) = f(x')\). Here, \(q\) is surjective, \(\bar{f}\) is bijective and \(u\) is injective.
  First, we provide the set \(X/{\sim}\) with the structure of a Hausdorff commutative monoid.
  Recall that \(G(Y)\) has the same underlying set as \(Y\), so \(\im(f) \subseteq Y\) and, thanks to \(u\) and \(\bar{f}\) being injective, we may define a subspace topology \(\tau_\Sigma\) on \(X/{\sim}\) induced by \(u\bar{f}\).
  The composite \(u \bar{f} \colon X/{\sim} \to Y\) is continuous and \((X/{\sim},\tau_\Sigma)\) is Hausdorff.
  Notice that \(\sim\) satisfies, for every \(a,a',b,b' \in X\),
  \begin{equation*}
    a \sim a' \text{ and } b \sim b' \  \implies \ \Sigma \{a,b\} \sim \Sigma \{a',b'\}
  \end{equation*}
  because \(f\) is a \(\Sigma\)-homomorphism and \(X\) is finitely total:
  \begin{equation*}
    f(\Sigma \{a,b\}) = \Sigma \{f(a),f(b)\} = \Sigma \{f(a'),f(b')\} = f(\Sigma \{a',b'\})
  \end{equation*}
  Therefore, the operation \(+_\Sigma \colon X/{\sim} \times X/{\sim} \to X/{\sim}\) given by
  \begin{equation*}
    [a] +_\Sigma [b] = [\Sigma \{a,b\}]
  \end{equation*}
  is well-defined, and it is trivial to show that it forms a commutative monoid with the equivalence class \([0]\) as neutral element.
  Moreover, the composite \(u\bar{f} \colon X/{\sim} \to Y\) is a monoid homomorphism:
  \begin{equation*}
    u\bar{f}([a] +_\Sigma [b]) = f(\Sigma \{a,b\}) = \Sigma \{f(a),f(b)\} = f(a) + f(b) = u\bar{f}[a] + u\bar{f}[b]
  \end{equation*}
  where \(+\) is the monoid operation from \(Y\).
  To prove that \(+_\Sigma\) is continuous, first notice that for every two nets \(\alpha,\beta \colon D \to X\) the following chain of implications is satisfied:
  \begin{align*}
    \lim\, q\alpha &= [a] \text{ and } \lim\, q\beta = [b] && \\
      &\implies \lim f\alpha = f(a) \text{ and } \lim f\beta = f(b) &&\text{(\(u\bar{f}\) continuous, \(f = u\bar{f}q\))} \\
      &\implies \lim\, (f\alpha + f\beta) = f(a) + f(b) &&\text{(\(+\) is continuous)} \\
      &\implies \lim f(\Sigma \{\alpha,\beta\}) = f(\Sigma \{a,b\}) &&\text{(\(\Sigma\)-homomorphism, \(X \in \SCat{ft}\))} \\
      &\implies \lim u\bar{f}[\Sigma \{\alpha,\beta\}] = u\bar{f}[\Sigma \{a,b\}] &&\text{(\(f = u\bar{f}q\))} \\
      &\implies \lim\, [\Sigma \{\alpha,\beta\}] = [\Sigma \{a,b\}] &&\text{(\(\tau_\Sigma\) is the subspace topology)}
  \end{align*}
  Furthermore, notice that every net \(\alpha' \colon D \to X/{\sim}\) may be (non-uniquely) characterised by a net \(\alpha \colon D \to X\) such that \(\alpha' = q\alpha\), since \(q\) is surjective.
  Thus, according the chain of implications above, the function \(+_\Sigma\) is continuous and it follows that \((X/{\sim},\tau_\Sigma,+_\Sigma)\) is a Hausdorff commutative monoid.

  Next, we now show that \(q \colon X \to G(X/{\sim})\) is a \(\Sigma\)-homomorphism.
  Let \(\fml{x} \in X^*\) be a summable family and let \(\sigma_f \colon \fpset{f\fml{x}} \to Y\) and \(\sigma_q \colon \fpset{q\fml{x}} \to X/{\sim}\) be the nets of finite partial sums of \(f\fml{x} \in Y^*\) and \(q\fml{x} \in (X/{\sim})^*\) respectively.
  It is straightforward to check that \(u\bar{f}\sigma_q\) is a subnet of \(\sigma_f\).
  The following chain of implications establishes that \(q\) is a \(\Sigma\)-homomorphism:
  \begin{align*}
    \Sigma \fml{x} \keq x &\implies \Sigma f\fml{x} \keq f(x)  &&\text{(\(\Sigma\)-homomorphism \(f\))} \\
      &\implies \lim \sigma_f = f(x)  &&\text{(def.\@ \(\Sigma\) in \(G(Y)\))} \\
      &\implies \lim\, (u\bar{f} \circ \sigma_q) = u\bar{f}q(x)  &&\text{(\(u\bar{f}\sigma_q\) subnet, \(f = u\bar{f}q\))} \\
      &\implies \lim \sigma_q = q(x)  &&\text{(\(\tau_\Sigma\) is the subspace topology)} \\
      &\implies \Sigma q\fml{x} \keq q(x)  &&\text{(def.\@ \(\Sigma\) in \(G(X/{\sim})\))}
  \end{align*}

  In summary, \(X/{\sim}\) is a Hausdorff commutative monoid, and \(X \xto{q} G(X/{\sim})\) is an element of \(S\).
  Moreover, \(f = u \bar{f} \circ q\) and \(u\bar{f}\) is a continuous monoid homomorphism, providing a morphism \((q,X/{\sim}) \to (f,Y)\) in the comma category.
  This construction can be reproduced for every \(Y \in \HausCMon\) and every \(\Sigma\)-homomorphism \(f \colon X \to G(Y)\).
  Consequently, \(S\) is a weakly initial set in the comma category \(\comma{X}{G}\) and the claim that \(G\) has a left adjoint follows from the general adjoint functor theorem~\cite[Theorem~6.3.10]{Leinster}.
\end{proof}

\begin{corollary} \label{cor:HAG_g_adj}
  There is a left adjoint to the functor \(G \colon \HAG \to \SCat{g}\).
\end{corollary}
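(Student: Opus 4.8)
The plan is to repeat the strategy of Theorem~\ref{thm:HCMon_ft_adj}: verify the hypotheses of the general adjoint functor theorem~\cite[Theorem~6.3.10]{Leinster} for $G \colon \HAG \to \SCat{g}$. As there, $\HAG$ is locally small because the forgetful functor $\HAG \to \Set$ is faithful, so only two things need checking: that $\HAG$ is complete with $G$ preserving limits, and that for every $X \in \SCat{g}$ the comma category $\comma{X}{G}$ has a weakly initial set.

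First I would observe that $\HAG$ is a full subcategory of $\HausCMon$ closed under small products and equalisers: a product of Hausdorff abelian groups, with the product topology and componentwise operations, is again one; and the equaliser set $\{x \mid f(x) = g(x)\}$ of two continuous group homomorphisms is a subgroup which, with the subspace topology, is a Hausdorff topological abelian group. Hence $\HAG$ is complete and the inclusion $\HAG \into \HausCMon$ preserves limits. Now $G \colon \HAG \to \SCat{g}$ followed by the embedding $\SCat{g} \into \SCat{ft}$ equals the composite $\HAG \into \HausCMon \xto{G} \SCat{ft}$, which preserves limits by Proposition~\ref{prop:HausCMon_complete}. Since $\SCat{g} \into \SCat{ft}$ is a full, faithful, limit-preserving functor out of a complete category (Propositions~\ref{prop:SCat*_locally_presentable} and~\ref{prop:preserve_limits_chain_colimits}), it reflects limits, and therefore $G \colon \HAG \to \SCat{g}$ preserves limits.

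Next I would transcribe, almost verbatim, the weakly initial set argument from Theorem~\ref{thm:HCMon_ft_adj}, inserting the treatment of inverses. Fix $X \in \SCat{g}$ and let $S$ be the (small) collection of all $\Sigma$-homomorphisms $X \to G(A)$ with $A$ a Hausdorff abelian group on a set quotient of $X$. Given any $\Sigma$-homomorphism $f \colon X \to G(Y)$ with $Y \in \HAG$, factor $f = u\bar f q$ through its image $X/{\sim}$, where $x \sim x'$ iff $f(x) = f(x')$. Equip $X/{\sim}$ with the operation $[a] +_\Sigma [b] = [\Sigma\{a,b\}]$ exactly as in the theorem, together with the negation $-[a] = [-a]$; the latter is well-defined because $f$ being a $\Sigma$-homomorphism gives $\Sigma\{f(a),f(-a)\} \keq f(\Sigma\{a,-a\}) = f(0) = 0$, so $f(-a) = -f(a)$ in the group $Y$ and thus $a \sim a'$ implies $-a \sim -a'$. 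Then $X/{\sim}$ is an abelian group, the subspace topology it inherits through $u\bar f \colon X/{\sim} \to Y$ is Hausdorff, addition is continuous by the same net argument as in the theorem, and negation is continuous by the analogous net argument using continuity of inversion on $Y$. Hence $X/{\sim} \in \HAG$; the map $q \colon X \to G(X/{\sim})$ is a $\Sigma$-homomorphism by the same chain of implications as before, so it lies in $S$; and $f = u\bar f \circ q$ with $u\bar f$ continuous exhibits a morphism $(q, X/{\sim}) \to (f, Y)$ in $\comma{X}{G}$. Therefore $S$ is weakly initial, and the general adjoint functor theorem yields the desired left adjoint.

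The only genuinely new work beyond Theorem~\ref{thm:HCMon_ft_adj} is checking that negation descends to $X/{\sim}$ and remains continuous there; I expect this to be the main (though minor) obstacle, as everything else is a line-by-line adaptation of the preceding proof.
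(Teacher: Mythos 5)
Your proposal is correct and follows essentially the same route as the paper: the paper likewise invokes the general adjoint functor theorem, reuses the weakly initial set construction of Theorem~\ref{thm:HCMon_ft_adj} verbatim, and adds exactly the verification that \(X/{\sim}\) carries inverses and that inversion is continuous via the same net argument you describe. The only (welcome) extras in your write-up are the explicit check that negation descends to the quotient and the limit-reflection argument for why \(G\) preserves limits, both of which the paper dismisses as immediate.
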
 \begin{proof}
  The claim follows from the same argument used in the previous theorem with the exception that, in this case, we must prove that \((X/{\sim},\tau_\Sigma,+_\Sigma)\) is a Hausdorff abelian group.
  It has already been established that \(X/{\sim} \in \HausCMon\), so we only need to prove that inverses exist in \(X/{\sim}\) for each element and that  inversion is continuous.
  Since in this case \(X\) is a \(\Sigma\)-group, the existence of inverses in \(X/{\sim}\) is immediate, where the inverse of every \([x] \in X/{\sim}\) is \([-x]\).
  To prove that inversion \([x] \mapsto [-x]\) is continuous, first notice that for every net \(\alpha \colon D \to X\) the following is satisfied:
  \begin{align*}
    \lim\, q\alpha = [a] &\implies \lim f\alpha = f(a) &&\text{(\(u\bar{f}\) continuous, \(f = u\bar{f}q\))} \\
      &\implies \lim\, (-f\alpha) = -f(a) &&\text{(continuous inversion map in \(Y\))} \\
      &\implies \lim f(-\alpha) = f(-a) &&\text{(group homomorphism \(f\))} \\
      &\implies \lim u\bar{f}[-\alpha] = u\bar{f}[-a] &&\text{(\(f = u\bar{f}q\))} \\
      &\implies \lim\, [-\alpha] = [-a] &&\text{(\(\tau_\Sigma\) is the subspace topology)}
  \end{align*}
  Recall that every net \(\alpha' \colon D \to X/{\sim}\) may be (non-uniquely) characterised by a net \(\alpha \colon D \to X\) such that \(\alpha' = q\alpha\), since \(q\) is surjective.
  Thus, the chain of implications above proves that inversion map in \(X/{\sim}\) is continuous and it follows that \((X/{\sim},\tau_\Sigma,+_\Sigma)\) is a Hausdorff abelian group. It then follows from the same argument as in the proof of Theorem~\ref{thm:HCMon_ft_adj} that \(q \colon X \to G(X/{\sim})\) is a \(\Sigma\)-homomorphism, providing all the tools necessary to show that \(\comma{X}{G}\) has a weakly initial set. It is straightforward to extend the proof Proposition~\ref{prop:HausCMon_complete} to show that \(\HAG\) is a complete category. It is immediate that \(\HAG\) is locally small and that \(G\) preserves limits. Consequently, the claim that \(G \colon \HAG \to \SCat{g}\) has a left adjoint follows from the general adjoint functor theorem~\cite[Theorem~6.3.10]{Leinster}.
\end{proof}

\section{Related work}
\label{sec:Sigma_rel_work}

The notion of strong \(\Sigma\)-monoid that is discussed in this document was originally proposed by Haghverdi~\cite{Haghverdi}, although similar mathematical structures predate it: for instance, the partially additive monoids of Manes and Arbib~\cite{ManesArbib} or the \(\Sigma\)-groups of Higgs~\cite{Higgs}.
We have proposed and studied a weaker version of \(\Sigma\)-monoids that admit additive inverses. Its axioms are inspired by the work of Higgs on \(\Sigma\)-groups~\cite{Higgs}.
We now provide some further discussion of a selection of works closely related to the contents of this paper.

\paragraph*{Haghverdi~\cite{Haghverdi}.} Haghverdi's (strong) \(\Sigma\)-monoids are defined in terms of two axioms: the unary sum axiom and the partition-asociativity axiom. The former corresponds precisely to the singleton axiom from weak \(\Sigma\)-monoids (Definition~\ref{def:wSm}) whereas the latter is equivalent to the combination of the axioms of subsummability, strong bracketing and strong flattening (Definition~\ref{def:sSm}).
Separating partition-associativity into its two directions of implication (\ie{} strong bracketing and strong flattening) allow us to weaken each of them in a different way.

\paragraph*{Hoshino~\cite{RTUDC}.} Hoshino proved that the category of total strong \(\Sigma\)-monoids (\ie{} the subcategory of \(\SCat{s}\) restricted to \(\Sigma\)-monoids whose \(\Sigma\) is a total function) is a reflective subcategory of \(\SCat{s}\).
This was achieved via a non-constructive argument using the general adjoint functor theorem and its strategy is reproduced in Appendix~\ref{sec:adjunctions} to prove that the embedding \(\SCat{ft} \into \SCat{w}\) has a left adjoint.
Interestingly, the proof of Proposition~\ref{prop:s_w_adj} --- which establishes that \(\SCat{s}\) is a reflective subcategory of \(\SCat{w}\) --- can be used (virtually without alteration) to give a constructive proof of Hoshino's result; in this case, the intersection construction from Lemma~\ref{lem:intersection_of_sSm} is no longer necessary.
Moreover, the proof of the existence of a monoidal structure given by the tensor product on every \(\SCat{*}\) category (Section~\ref{sec:tensor}) follows the same strategy Hoshino used in~\cite{RTUDC} to prove such a result for the particular case of \(\SCat{s}\).

\paragraph*{Higgs~\cite{Higgs}.} The \(\Sigma\)-groups defined in this paper are equivalent to Higgs' notion of \(\Sigma\)-groups; 
in particular, those where the set of summable families is restricted to countable families only.
The requirement that our families are countable is tied to our description of $\Sigma$-monoids as models of $\aleph_1$-ary essentially algebraic theories (Appendix~\ref{sec:locally_presentable}). However, the same strategy could be reproduced to characterise $\Sigma$-monoids as models of $\lambda$-ary essentially algebraic theories for other regular cardinals $\lambda$, so that our $\Sigma$-monoids may define summability of families of cardinality smaller than $\lambda$. This would weaken the claim of Proposition~\ref{prop:preserve_limits_chain_colimits} to embedding functors between $\SCat{*}$ categories preserving $\lambda$-directed colimits, which remains sufficient for the proof of Theorem~\ref{thm:embedding_right_adjoint}.

To verify that the axioms in Higgs' definition of \(\Sigma\)-groups~\cite{Higgs} coincides with ours, notice that the neutral element axiom and the singleton axiom of weak \(\Sigma\)-monoids appear explicitly in Higgs' definition as axioms (\(\Sigma1\)) and (\(\Sigma2\)) respectively, whereas finite totality is imposed by Higgs by requiring that the \(\Sigma\)-group contains an abelian group.
The bracketing axiom of weak \(\Sigma\)-monoids is Higgs' (\(\Sigma4\)) axiom, whereas Higgs' (\(\Sigma3\)) axiom corresponds to flattening and is stated as follows (paraphrased using our notation):
\begin{equation*}
  \Sigma \fml{x} \keq x \text{ and } \Sigma \fml{x'} \keq x' \implies \Sigma (\fml{x} \uplus -\fml{x'}) \keq x - x'.
\end{equation*}
Since flattening in weak \(\Sigma\)-monoids only discusses addition of finite collection of families and since \(\Sigma\)-groups are required to be finitely total, Higgs' axiom implies flattening.
Moreover, when \(\fml{x} = \emptyset\), Higgs' axiom implies that every summable family \(\fml{x'}\) has a counterpart \(-\fml{x'}\) so that \(\Sigma (-\fml{x'}) \keq -(\Sigma \fml{x'})\).
Thus, Higgs combines in a single axiom both the flattening axiom of weak \(\Sigma\)-monoids and the requirement that the inversion map is a \(\Sigma\)-homomorphism.
With these remarks in mind, it is straightforward to check that Higgs' definition of \(\Sigma\)-groups is equivalent to the definition presented in this paper.

On another note, the realisation that Hausdorff abelian groups can be captured as instances of \(\Sigma\)-groups is briefly mentioned in the introduction of Higgs' paper~\cite{Higgs}, although no explicit proof is provided.
It was Higgs' definition that revealed the subtle weakening to bracketing that enables us to define a functor \(\HAG \to \SCat{g}\).
In contrast, Higgs proved that every \(\Sigma\)-group is a net group and, moreover, the corresponding embedding functor has a right adjoint.
This is somewhat dual to the result from Section~\ref{sec:HausCMon->SCatft} where the functor \(\HAG \to \SCat{g}\) is shown to have a left adjoint.

\paragraph*{Tsukada and Asada~\cite{tsukadaasada:linearlogic}}
Tsukada and Asada follow Haghverdi's definition of $\Sigma$-monoid, phrasing the axioms in an alternative way. They also consider $\Sigma$-semirings --- which are roughly $\Sigma$-monoids furnished with a multiplication that distributes over the sum --- and modules over $\Sigma$-semirings. As another application of the use of $\Sigma$-monoids, they prove that the category of modules uniformly captures different known models for linear logic by varying the $\Sigma$-semiring.

\bibliographystyle{plain}
\bibliography{Bibliography}

\begin{thebibliography}{10}

\bibitem{CQM}
Samson Abramsky and Bob Coecke.
\newblock Categorical quantum mechanics.
\newblock {\em Handbook of quantum logic and quantum structures}, 2:261--325, 2009.

\bibitem{Adamek_Rosicky}
J.~Adamek and J.~Rosicky.
\newblock {\em Locally Presentable and Accessible Categories}.
\newblock London Mathematical Society Lecture Note Series. Cambridge University Press, 1994.

\bibitem{pablo:thesis}
P.~Andr{\'e}s-Mart{\'i}nez.
\newblock {\em Unbounded loops in quantum programs: categories and weak while loops}.
\newblock PhD thesis, University of Edinburgh, 2022.

\bibitem{Day}
Brian Day.
\newblock A reflection theorem for closed categories.
\newblock {\em Journal of Pure and Applied Algebra}, 2(1):1--11, 1972.

\bibitem{TensorTAG}
D.~J.~H. Garling.
\newblock Tensor products of topological {A}belian groups.
\newblock {\em Journal für die reine und angewandte Mathematik}, (223):164--182, 1966.

\bibitem{TensorMonoidalTAG}
H.~Glöckner.
\newblock Tensor products in the category of topological vector spaces are not associative.
\newblock {\em Commentationes Mathematicae Universitatis Carolinae}, 45(4):607--614, 2004.

\bibitem{Haghverdi}
E.~Haghverdi.
\newblock Unique decomposition categories, geometry of interaction and combinatory logic.
\newblock {\em Mathematical Structures in Computer Science}, 10(2):205–230, 2000.

\bibitem{Higgs}
D.~Higgs.
\newblock {$\Sigma$}-groups as convergence groups.
\newblock {\em Mathematische Nachrichten}, 137(1):101--112, 1988.

\bibitem{RTUDC}
N.~Hoshino.
\newblock A representation theorem for unique decomposition categories.
\newblock In {\em Proceedings 28th Conference on the Mathematical Foundations of Programming Semantics (MFPS)}, volume 286, pages 213--227, 2012.

\bibitem{Leinster}
T.~Leinster.
\newblock {\em Basic Category Theory}.
\newblock Cambridge Studies in Advanced Mathematics. Cambridge University Press, 2014.

\bibitem{ManesArbib}
E.~G. Manes and M.~A. Arbib.
\newblock {\em Algebraic Approaches to Program Semantics}.
\newblock Springer New York, 1986.

\bibitem{Munkres}
J.~R. Munkres.
\newblock {\em Topology}.
\newblock Featured Titles for Topology. Prentice Hall, Incorporated, 2000.

\bibitem{partialHorn}
E.~Palmgren and S.J. Vickers.
\newblock Partial horn logic and cartesian categories.
\newblock {\em Annals of Pure and Applied Logic}, 145(3):314--353, 2007.

\bibitem{tsukadaasada:linearlogic}
T.~Tsukuda and K.~Asada.
\newblock Linear-algebraic models of linear logic as categories of modules over $\sigma$-semirings.
\newblock In {\em LICS'22: Proceedings of the 37th Annual ACM/IEEE Symposium on Logic in Computer Science}, pages 60:1--13, 2022.

\bibitem{Ying}
Mingsheng Ying.
\newblock {\em Foundations of Quantum Programming (Second Edition)}.
\newblock Morgan Kaufmann, second edition edition, 2024.

\end{thebibliography}

\appendix

\section{Every \(\SCat{*}\) category is locally presentable}
\label{sec:locally_presentable}

The axioms of \(\Sigma\)-monoids can be formalised in the language of essentially algebraic theories.
It is well known~\cite[Theorem 3.36]{Adamek_Rosicky} that the category of models of any such theory is locally presentable, thus providing us with a straightforward path to show that every \(\SCat{*}\) category is locally presentable.

A (single-sorted) signature is a set of symbols where each symbol has a cardinal number assigned to it, indicating its arity.
We use the notation \(\sigma \colon \bullet^n \to \bullet\) to refer to an \(n\)-ary symbol and refer to symbols with arity \(0\) as constants.
Given a signature \(\mathcal{S}\), a term is either a standard variable \(x_i\) or an \(n\)-ary symbol \(\sigma \in \mathcal{S}\) along with \(n\) input terms \(t_i\):
\begin{equation*}
  \sigma \left( \prod_{i \leq n} t_i \right)
\end{equation*}
An equation is an expression of the form \(t = t'\) where \(t\) and \(t'\) are terms on a common signature.

The definition of essentially algebraic theory used here corresponds to Definition 3.34 from~\cite{Adamek_Rosicky} with the generalisation proposed in Remark 3.37 from the same source, and restricted to the single-sorted case.

\begin{definition}
A (single-sorted) \emph{essentially algebraic theory} is a tuple \((\mathcal{S}, \leq, E, \Def)\) consisting of a (single-sorted) signature \(\mathcal{S}\) with a well-ordering \(\leq\), a set of equations \(E\) on \(\mathcal{S}\), and a function \(\Def\) assigning to each symbol \(\sigma \in \mathcal{S}\) a set \(\Def(\sigma)\) of equations on symbols from \(\{\tau \in \mathcal{S} \mid \tau < \sigma\}\).
\end{definition}

We say that an essentially algebraic theory is \(\lambda\)-ary, for a regular cardinal \(\lambda\), provided all of the symbols in its signature have arity strictly smaller than \(\lambda\), for each \(\sigma \in \mathcal{S}\) the cardinal of the set \(\Def(\sigma)\) is strictly smaller than \(\lambda\), and each of the equations in \(E\) and \(\Def(\sigma)\) use fewer than \(\lambda\) standard variables.

\begin{definition}
  A \emph{model} of an essentially algebraic theory \((\mathcal{S}, \leq, E, \Def)\) is a set \(A\) together with partial functions \(\sigma_A \colon \prod_{i \leq n} A \pto A\) for each symbol \(\sigma \colon \bullet^n \to \bullet\) from the signature \(\mathcal{S}\) such that:
  \begin{itemize}
    \item \(\sigma_A\) is defined on inputs \((a_i)_{i \leq n}\) if and only if all of the equations in \(\Def(\sigma)\) hold after substitution of variables \(x_i \mapsto a_i\) and symbols \(\tau \mapsto \tau_A\),
    \item and for every assignment of variables \(x_i \mapsto a_i\) and every equation in \(E\), if both sides of the equation are well-defined after substitution, then their results match.
  \end{itemize}
  A \emph{homomorphism} between models is a function \(f \colon A \to B\) between their underlying sets such 
  \begin{equation*}
    \sigma_A \left(\prod_{i \leq n} a_i \right) \keq a' \implies \sigma_B \left(\prod_{i \leq n} f(a_i) \right) \keq f(a').
  \end{equation*}
  where $\keq$ is the Kleene equality of partial functions, as defined in Notation~\ref{not:keq}.
  The category of models of an essentially algebraic theory is comprised of all of the models of the theory and the homomorphisms between them.
\end{definition}

We first construct an essentially algebraic theory for weak \(\Sigma\)-monoids. We do so incrementally, including new symbols and equations as necessary to capture each of the axioms of weak \(\Sigma\)-monoids (Definition~\ref{def:wSm}).
The well-ordering in \(\mathcal{S}\) satisfies the chronological order in which we introduce the symbols in our discussion; the particular well-ordering among the symbols introduced in a single step can be chosen arbitrarily.
We use a single-sorted signature \(\mathcal{S}\) with a constant symbol \(0\) and, for each ordinal \(n \leq \omega\), an \(n\)-ary symbol \(s_n \colon \bullet^n \to \bullet\). In particular, \(s_\omega\) has countably infinite arity.
These symbols are total, \ie{} \(\Def(0) = \emptyset\) and \(\Def(s_n) = \emptyset\) for all \(n \leq \omega\).
For our first subset of partial symbols, we introduce an \(n\)-ary symbol \(\sigma_n \colon \bullet^n \to \bullet\) for each \(n \leq \omega\) with domain of definition
\begin{equation*}
  \Def(\sigma_n) = \left\{s_n \left(\prod_{i \leq n} x_i \right) = 0 \right\}.
\end{equation*}
Intuitively, \(s_n(\ldots) = 0\) identifies the input family of \(n\) elements as summable and, if so, \(\sigma_n\) provides the result of the sum.
Since \(\Sigma\)-monoids deal with families rather than sequences, we need to add equations for arbitrary permutations of the inputs:
\begin{equation*}
  \left\{s_n \left(\prod_{i \leq n} x_i \right) = s_n \left(\prod_{i \leq n} x_{p(i)} \right) \Mid n \leq \omega, p \in S(n) \right\} \subset E
\end{equation*}
\begin{equation*}
  \left\{\sigma_n \left(\prod_{i \leq n} x_i \right) = \sigma_n \left(\prod_{i \leq n} x_{p(i)} \right) \Mid n \leq \omega, p \in S(n) \right\} \subset E
\end{equation*}
where \(S(n)\) is the symmetric group.

The singleton axiom is captured by the following equations:
\begin{equation*}
  \{s_1(x) = 0,\ \sigma_1(x) = x\} \subset E.
\end{equation*}
For the neutral element axiom, we include the following equations:
\begin{equation*}
  \left\{s_{n+z} \left(\prod_{i \leq n} x_i, \prod_{i \leq z} 0\right) = s_n \left(\prod_{i \leq n} x_i\right) \Mid n,z \leq \omega \right\} \subset E
\end{equation*}
\begin{equation*}
  \left\{\sigma_{n+z} \left(\prod_{i \leq n} x_i, \prod_{i \leq z} 0\right) = \sigma_n \left(\prod_{i \leq n} x_i\right) \Mid n,z \leq \omega \right\} \subset E
\end{equation*}
This differs slightly from the neutral element axiom of Definition~\ref{def:wSm}, which discusses summability of families but requires nothing about their results, so the equations on $\sigma$ might seem to impose stronger restrictions on the models of this theory.
Notice, however, that these equations on $\sigma$ are satisfied by any weak $\Sigma$-monoid, as established in Proposition~\ref{prop:neutral_element}.
The reason we include these equations explicitly is that the formalisation of the bracketing and flattening axioms we present below does not consider empty subfamilies, so the proof of the latter proposition cannot be reproduced for models of this theory and we must instead impose the result explicitly.
On the other hand, it is trivial to verify that \(s_0 = 0\) and \(\sigma_0 = 0\) follow from the equations introduced so far.

To capture the implication in the bracketing axiom we use the approach from Example 3.35 (3) in~\cite{Adamek_Rosicky} and introduce new symbols whose domain of definition correspond to the conditions of the implication.
The bracketing axiom requires us to discuss partitions of families, so for each \(n \leq \omega\), we consider the set \(\mathcal{F}_n\) of all functions \(f \colon n+1 \to \omega + 1\) --- here we treat each ordinal \(\lambda\) as the well-ordered set of all smaller ordinals \([0,\lambda)\).
For each such function \(f \in \mathcal{F}_n\) we define an auxiliary set of equations (not included in \(E\)).
\begin{equation*}
  \Delta_f = \left\{ s_{\lvert f^{-1}(j) \rvert} \left(\prod_{i \in f^{-1}(j)} x_i \right) = 0 \Mid j \in \im(f) \right\}
\end{equation*}
These capture the requirement that each family in the partition is summable.
The new symbols introduced for bracketing are:
\begin{equation*}
  \left\{\beta^s_{n,f} \colon \bullet^n \to \bullet \Mid n \leq \omega,\, f \in \mathcal{F}_n,\, j \in \im(f),\, \lvert f^{-1}(j) \rvert < \omega \right\} \subset \mathcal{S}
\end{equation*}
\begin{equation*}
  \left\{\beta^\sigma_{n,f} \colon \bullet^n \to \bullet \Mid n \leq \omega,\, f \in \mathcal{F}_n,\, j \in \im(f),\, \lvert f^{-1}(j) \rvert < \omega \right\} \subset \mathcal{S}
\end{equation*}
where we have imposed that each family in the partition is finite.
The domain of definition of these symbols is
\begin{equation*}
  \Def(\beta^\sigma_{n,f}) = \Def(\beta^s_{n,f}) = \Delta_f \cup
    \left\{ s_n \left(\prod_{i \leq n} x_i \right) = 0 \right\}
\end{equation*}
and we introduce the following equations for each \(\beta^s_{n,f}, \beta^\sigma_{n,f} \in \mathcal{S}\):
\begin{equation*}
  \left\{ \beta^s_{n,f} \left(\prod_{i \leq n} x_i \right) = s_n \left(\prod_{i \leq n} x_i \right),\ \beta^\sigma_{n,f} \left(\prod_{i \leq n} x_i \right) = \sigma_n \left(\prod_{i \leq n} x_i \right) \right\} \subset E
\end{equation*}
\begin{equation*}
  \left\{ \beta^s_{n,f} \left(\prod_{i \leq n} x_i \right) = s_{\lvert \im(f) \rvert} \left( \prod_{j \in \im(f)} \sigma_{\lvert f^{-1}(j) \rvert} \left(\prod_{i \in f^{-1}(j)} x_i \right) \right) \right\} \subset E
\end{equation*}
\begin{equation*}
  \left\{ \beta^\sigma_{n,f} \left(\prod_{i \leq n} x_i \right) = \sigma_{\lvert \im(f) \rvert} \left( \prod_{j \in \im(f)} \sigma_{\lvert f^{-1}(j) \rvert} \left(\prod_{i \in f^{-1}(j)} x_i \right) \right) \right\} \subset E
\end{equation*}
which, when \(\beta^s_{n,f}\) is defined, imply (by transitiviy of \(=\)) the right-hand side of the bracketing axiom.

The flattening axiom is captured in a similar way.
We introduce the following symbols:
\begin{equation*}
  \left\{\varphi^s_{n,f} \colon \bullet^n \to \bullet \Mid n \leq \omega,\, f \in \mathcal{F}_n,\, \lvert \im(f) \rvert < \omega \right\} \subset \mathcal{S}
\end{equation*}
\begin{equation*}
  \left\{\varphi^\sigma_{n,f} \colon \bullet^n \to \bullet \Mid n \leq \omega,\, f \in \mathcal{F}_n,\, \lvert \im(f) \rvert < \omega \right\} \subset \mathcal{S}
\end{equation*}
where we have imposed that the number of families in the partition is finite.
The domain of definition of these symbols is
\begin{equation*}
  \Def(\varphi^s_{n,f}) = \Delta_f \cup
    \left\{ s_{\lvert \im(f) \rvert} \left( \prod_{j \in \im(f)} \sigma_{\lvert f^{-1}(j) \rvert} \left(\prod_{i \in f^{-1}(j)} x_i \right) \right) = 0 \right\}
\end{equation*}
\begin{equation*}
  \Def(\varphi^\sigma_{n,f}) = \Def(\varphi^s_{n,f})
\end{equation*}
and the equations
\begin{equation*}
  \left\{ \varphi^s_{n,f} \left(\prod_{i \leq n} x_i \right) = s_n \left(\prod_{i \leq n} x_i \right),\ \varphi^\sigma_{n,f} \left(\prod_{i \leq n} x_i \right) = \sigma_n \left(\prod_{i \leq n} x_i \right) \right\} \subset E
\end{equation*}
\begin{equation*}
  \left\{ \varphi^s_{n,f} \left(\prod_{i \leq n} x_i \right) = s_{\lvert \im(f) \rvert} \left( \prod_{j \in \im(f)} \sigma_{\lvert f^{-1}(j) \rvert} \left(\prod_{i \in f^{-1}(j)} x_i \right) \right) \right\} \subset E
\end{equation*}
\begin{equation*}
  \left\{ \varphi^\sigma_{n,f} \left(\prod_{i \leq n} x_i \right) = \sigma_{\lvert \im(f) \rvert} \left( \prod_{j \in \im(f)} \sigma_{\lvert f^{-1}(j) \rvert} \left(\prod_{i \in f^{-1}(j)} x_i \right) \right) \right\} \subset E
\end{equation*}
imply the right hand side of the flattening axiom when \(\varphi^s_{n,f}\) is defined.

This theory is \(\aleph_1\)-ary and, by construction, there is a one-to-one correspondence between models of this theory and weak \(\Sigma\)-monoids. Morphisms in the category of models are precisely the \(\Sigma\)-homomorphisms in \(\SCat{w}\). Consequently, the category of models of the essentially algebraic theory presented above is equivalent to \(\SCat{w}\).

We now sketch what needs to be added to the above theory so that its category of models corresponds to each of the other \(\SCat{*}\) categories:
\begin{itemize}
  \item For \(\SCat{s}\) we introduce extra symbols \(\beta\) and \(\varphi\) (along with their equations) for all \(n \leq \omega\) and \(f \in \mathcal{F}_n\) that were not included already, so that we are lifting the constraints on finiteness from bracketing and flattening. The subsummability axiom is captured with a new set of symbols \(\tau_{n,U}\) for each \(n \leq \omega\) and each subset \(U \subset n+1\). The domain of definition of each \(\tau_{n,U}\) is determined by \(s_n(\ldots) = 0\) and new equations are added to \(E\) to enforce that, when \(\tau_{n,U}\) is defined, \(s_{\lvert U \rvert}(\ldots) = 0\).
  \item For \(\SCat{ft}\) we instead include in \(E\) an equation \(s_n(\ldots) = 0\) for each finite ordinal \(n < \omega\).
  \item For \(\SCat{g}\) we introduce to the theory of \(\SCat{ft}\) a total symbol \(\iota \colon \bullet \to \bullet\) to capture the inversion map and include equations
  \begin{equation*}
    \{s_2(x, \iota(x)) = 0,\ \sigma_2(x, \iota(x)) = 0\} \subset E
  \end{equation*}
  so that every element has an inverse and
  \begin{equation*}
    \left\{s_n \left( \prod_{i \leq n} \iota(x_i) \right) = s_n \left( \prod_{i \leq n} x_i \right) \Mid n \leq \omega \right\} \subset E
  \end{equation*}
  \begin{equation*}
    \left\{\sigma_n \left( \prod_{i \leq n} \iota(x_i) \right) = \iota \left( \sigma_n \left( \prod_{i \leq n} x_i \right)\right) \Mid n \leq \omega \right\} \subset E
  \end{equation*}
  so that the inverse map is a \(\Sigma\)-homomorphism.
\end{itemize}

\begin{corollary} \label{cor:SCat_locally_presentable}
  Each of the \(\SCat{*}\) categories is locally \(\aleph_1\)-presentable.
\end{corollary}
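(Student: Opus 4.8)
The plan is to combine the explicit constructions carried out in this appendix with the standard fact that the category of models of a \(\lambda\)-ary essentially algebraic theory is locally \(\lambda\)-presentable. Since the appendix has already built, for each \(\SCat{*}\) category, an essentially algebraic theory (the theory assembled incrementally for \(\SCat{w}\), together with the additions sketched for \(\SCat{s}\), \(\SCat{ft}\) and \(\SCat{g}\)) and argued that its category of models is equivalent to that \(\SCat{*}\) category — with homomorphisms of models being exactly the \(\Sigma\)-homomorphisms — all that remains is to confirm that each of these theories has arity \(\aleph_1\) and then to apply the relevant theorem.

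First I would verify \(\aleph_1\)-arity by inspecting the construction. Every symbol introduced — the constant \(0\), the total symbols \(s_n\) and \(\sigma_n\), the bracketing symbols \(\beta^s_{n,f}\) and \(\beta^\sigma_{n,f}\), the flattening symbols \(\varphi^s_{n,f}\) and \(\varphi^\sigma_{n,f}\), the subsummability symbols \(\tau_{n,U}\) added for \(\SCat{s}\), and the inversion symbol \(\iota\) added for \(\SCat{g}\) — has arity at most \(\omega\), hence strictly below \(\aleph_1\). Each domain-of-definition set \(\Def(\sigma)\) is countable: \(\Def(\sigma_n)\), \(\Def(s_n)\) and \(\Def(\tau_{n,U})\) are finite, while \(\Def(\beta^\sigma_{n,f}) = \Def(\beta^s_{n,f}) = \Delta_f \cup \{s_n(\ldots)=0\}\), and the corresponding \(\varphi\) sets, have cardinality \(|\im(f)| + 1 \le \aleph_0\) since \(\im(f) \subseteq \omega+1\). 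Finally, each equation in \(E\) and in each \(\Def(\sigma)\) involves at most \(n+1 \le \aleph_0\) standard variables, even when it contains nested sums indexed by \(\im(f)\) and the fibres \(f^{-1}(j)\). The cardinalities appearing in the construction that do exceed \(\aleph_0\) — the number of permutations in \(S(\omega)\), the number of partition functions in \(\mathcal{F}_\omega\), the number of subsets of \(\omega+1\) — only make the signature \(\mathcal{S}\) and the equation set \(E\) large; this is harmless, since the definition of a \(\lambda\)-ary essentially algebraic theory bounds the arity of each symbol, the size of each \(\Def(\sigma)\), and the number of variables per equation, but places no bound on the total number of symbols or equations.

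With \(\aleph_1\)-arity in hand, I would invoke \cite[Theorem~3.36]{Adamek_Rosicky}, which states that the category of models of an \(\aleph_1\)-ary essentially algebraic theory is locally \(\aleph_1\)-presentable. Since local \(\aleph_1\)-presentability is invariant under equivalence of categories, each \(\SCat{*}\) category — being equivalent to the category of models of the corresponding theory — is locally \(\aleph_1\)-presentable.

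The only delicate point, and it is a matter of care rather than of genuine difficulty, is the bookkeeping in the second step: one must clearly separate the quantities that the definition of ``\(\lambda\)-ary'' actually constrains (the arity of each individual symbol, the size of each individual \(\Def(\sigma)\), and the number of variables used by each individual equation) from the quantities it deliberately leaves unconstrained (the overall cardinality of \(\mathcal{S}\) and of \(E\)). Once this distinction is made explicit, the corollary follows immediately from the constructions already given.
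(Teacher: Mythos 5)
Your proposal is correct and follows exactly the paper's own argument: the appendix exhibits each \(\SCat{*}\) category as (equivalent to) the category of models of an \(\aleph_1\)-ary essentially algebraic theory, and the result then follows from the Ad\'amek--Rosick\'y theorem that such categories of models are locally \(\aleph_1\)-presentable. Your explicit verification of \(\aleph_1\)-arity --- in particular the observation that only the arity of each symbol, the size of each \(\Def(\sigma)\), and the number of variables per equation are constrained, not the total size of \(\mathcal{S}\) or \(E\) --- is a welcome elaboration of a point the paper leaves implicit.
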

\begin{proof}
  We have shown that every \(\SCat{*}\) category is equivalent to the category of models of some \(\aleph_1\)-ary essentially algebraic theory. Theorem 3.6 of~\cite{Adamek_Rosicky} establishes that such categories of models are locally \(\aleph_1\)-presentable.
\end{proof}

\section{Construction of the left adjoint to \(\SCat{s} \into \SCat{w}\)}
\label{sec:adjunctions}

The left adjoint to the embedding \(\SCat{s} \into \SCat{w}\) is constructed explicitly in Proposition~\ref{prop:s_w_adj}.
The key insight is that the strong versions of bracketing and flattening can be recovered by defining a  congruence \(\sim\) and quotienting the set of all families with respect to it.
Then, we ought to find the `smallest' strong \(\Sigma\)-monoid defined on this quotient set that satisfies the universal property of the adjunction.
The following lemma is instrumental to find such a `smallest' strong \(\Sigma\)-monoid.

\begin{lemma} \label{lem:intersection_of_sSm}
  Let \(\{(X_i,\Sigma^i)\}_I\) be a collection of strong \(\Sigma\)-monoids.
  Define a partial function \(\Sigma^\cap \colon (\cap_I X_i)^* \pto \cap_I X_i\) as follows:
  \begin{equation*}
    \Sigma^\cap \fml{x} = \begin{cases}
      x &\ifc \forall i \in I,\ \Sigma^i \fml{x} \keq x \\
      \undefined &\otherwise.
    \end{cases}
  \end{equation*}
  Then, \((\cap_I X_i, \Sigma^\cap)\) is a strong \(\Sigma\)-monoid.
\end{lemma}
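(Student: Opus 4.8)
The plan is to check the strong $\Sigma$-monoid axioms for $(\cap_I X_i, \Sigma^\cap)$ one by one, in each case reducing to the corresponding axiom in every $(X_i, \Sigma^i)$ via the elementwise definition of $\Sigma^\cap$. First observe that $\cap_I X_i$ is well-defined as a set and contains $0$ (since $\Sigma^i \emptyset \keq 0$ in each $X_i$, and the neutral element is forced to be the same in each, namely the unique $x$ with $\Sigma^i\{x\}\keq x$ matching across all $i$ --- or more carefully, we may simply note that $0 \in \cap_I X_i$ because $0 = \Sigma^i\emptyset$ lies in every $X_i$; if the neutral elements differ the intersection has no canonical $0$, so we should assume, as is implicit, that the $X_i$ are subobjects sharing a common $0$). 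The singleton axiom is immediate: $\Sigma^\cap\{x\} \keq x$ iff for all $i$, $\Sigma^i\{x\}\keq x$, which holds. The neutral element axiom follows the same way: $\Sigma^\cap\emptyset \keq 0$, and if $\fml{x}$ is $\Sigma^\cap$-summable then it is $\Sigma^i$-summable for every $i$, so its subfamily $\fml{x}_\emptyset$ of nonzero elements is $\Sigma^i$-summable with the same sum for every $i$ (by the neutral element axiom in $X_i$ together with Proposition~\ref{prop:neutral_element}), hence $\Sigma^\cap \fml{x}_\emptyset$ is defined and equals the same value.

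Next I would handle subsummability: if $\fml{x} \in (\cap_I X_i)^*$ is $\Sigma^\cap$-summable and $\fml{x'} \subseteq \fml{x}$, then $\fml{x}$ is $\Sigma^i$-summable for each $i$, so by subsummability in $X_i$ the subfamily $\fml{x'}$ is $\Sigma^i$-summable. The only subtlety is that the sums $\Sigma^i \fml{x'}$ must agree across $i$; but all the elements of $\fml{x'}$ lie in $\cap_I X_i$, and one then argues that these partial sums coincide by appealing to strong bracketing in any fixed $X_i$: write $\fml{x} = \fml{x'} \uplus (\fml{x} \setminus \fml{x'})$, both pieces $\Sigma^i$-summable, so $\Sigma^i\{\Sigma^i\fml{x'}, \Sigma^i(\fml{x}\setminus\fml{x'})\} \keq \Sigma^i\fml{x} = \Sigma^\cap\fml{x}$ independent of $i$; since this binary sum is determined (finitely totally, using flattening/the induced monoid structure within a fixed $X_i$) and $\Sigma^i(\fml{x}\setminus\fml{x'})$ is likewise common across $i$ by the same reasoning applied recursively --- here I would instead argue more cleanly: fix $i_0$, let $v = \Sigma^{i_0}\fml{x'}$; for any other $i$, both $\Sigma^i\fml{x'}$ and $v$ lie in $X_{i_0} \cap X_i$, and strong bracketing in $X_{i_0}$ shows $v$ is the unique candidate, so $\Sigma^i\fml{x'} = v$. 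Thus $\Sigma^\cap\fml{x'}$ is defined.

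For strong bracketing: given $\{\fml{x}_j\}_J$ with each $\fml{x}_j \in (\cap_I X_i)^*$ summable (under $\Sigma^\cap$) and $\fml{x} = \uplus_J \fml{x}_j$, suppose $\Sigma^\cap\fml{x} \keq x$. Then for each $i$: $\Sigma^i\fml{x}\keq x$, each $\fml{x}_j$ is $\Sigma^i$-summable with common sum $\Sigma^\cap\fml{x}_j$, so strong bracketing in $X_i$ gives $\Sigma^i\{\Sigma^\cap\fml{x}_j\}_J \keq x$; since this holds for all $i$ and the family $\{\Sigma^\cap\fml{x}_j\}_J$ lies in $(\cap_I X_i)^*$, we get $\Sigma^\cap\{\Sigma^\cap\fml{x}_j\}_J \keq x$. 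Strong flattening is entirely symmetric. The main obstacle, as flagged above, is the bookkeeping needed to ensure that whenever a family built from elements of $\cap_I X_i$ is summable in each $X_i$, the resulting sums agree across all $i$ --- this is not automatic from the axioms of the individual $X_i$ taken in isolation, and is exactly where one must use that $\Sigma^\cap$ is defined to require agreement (so the issue is really about showing $\Sigma^\cap$ is defined, i.e.\ that agreement can be achieved, rather than about closure of a preassigned value). Everything else is routine transport of axioms through the elementwise definition.
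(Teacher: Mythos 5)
Your overall strategy---transferring each axiom elementwise from the $(X_i,\Sigma^i)$---is the same as the paper's, and you are right that the only non-routine point is whether the values $\Sigma^i\fml{x'}$ agree across $i$. For strong bracketing and strong flattening your argument is fine, because there the common value is already supplied by the hypotheses (each $\fml{x}_j$ being $\Sigma^\cap$-summable means all the $\Sigma^i\fml{x}_j$ coincide), and for the neutral-element axiom Proposition~\ref{prop:neutral_element} pins $\Sigma^i\fml{x}_\emptyset$ to the common value $x$, as you note. (Your parenthetical characterisation of $0$ as ``the unique $x$ with $\Sigma^i\{x\}\keq x$'' is off---every element satisfies that by the singleton axiom---but you correctly fall back on assuming a shared $0$.)

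The gap is in subsummability. The step ``strong bracketing in $X_{i_0}$ shows $v$ is the unique candidate, so $\Sigma^i\fml{x'}=v$'' does not follow: from $\Sigma^{i}\{v_i,w_i\}\keq x$ for every $i$ you would need cancellativity to conclude $v_i=v_{i_0}$, and strong $\Sigma$-monoids need not be cancellative. No argument can close this gap, because at this level of generality the agreement genuinely fails: take $X_1=(\Nset\cup\{\infty\},+)$ and $X_2=(\Nset\cup\{\infty\},\max)$ with their total $\Sigma$ operations (countable sum and supremum), both strong $\Sigma$-monoids on the same underlying set with the same $0$. The family $\{1,2,3,\dots\}$ satisfies $\Sigma^1=\Sigma^2=\infty$, hence is $\Sigma^\cap$-summable, but its subfamily $\{1,2\}$ has $\Sigma^1=3\neq 2=\Sigma^2$, so $\Sigma^\cap\{1,2\}$ is undefined and subsummability fails. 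The lemma thus requires the additional (implicit) hypothesis that the $\Sigma^i$ agree wherever simultaneously defined; this holds in its only application, Proposition~\ref{prop:s_w_adj}, where every $\Sigma^f$ returns the same class $[\uplus_I\fml{x}_i]$ whenever it is defined, so all axioms reduce to ``defined in every $X_i$'' and transfer cleanly. The paper's own proof silently elides this point---you located the right pressure point, but your repair of it does not work.
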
 \begin{proof}
  Whenever a family \(\fml{x}\) is summable in \(\Sigma^\cap\), it must be summable in \(\Sigma^i\) for all \(i \in I\) and, considering that each \((X_i,\Sigma^i)\) is a strong \(\Sigma\)-monoid, it follows that every subfamily of \(\fml{x}\) is summable in every \(\Sigma^i\) and, hence, it is summable in \(\Sigma^\cap\) so that subsummability is satisfied.
  Similarly, strong bracketing and strong flattening in \(\Sigma^\cap\) follow from those in each \(\Sigma^i\) and the neutral element and singleton axioms are trivially satisfied.
  Consequently, \((\cap_I X_i, \Sigma^\cap)\) is a strong \(\Sigma\)-monoid.
\end{proof}

\begin{proposition} \label{prop:s_w_adj}
  There is a left adjoint functor to the embedding \(\SCat{s} \into \SCat{w}\).
\end{proposition}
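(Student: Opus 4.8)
The plan is to construct the left adjoint \(F \colon \SCat{w} \to \SCat{s}\) explicitly; since \(\SCat{s} \into \SCat{w}\) is full and faithful, this automatically exhibits \(\SCat{s}\) as a reflective subcategory, which is the content of the statement. Fix a weak \(\Sigma\)-monoid \((X,\Sigma)\). The first step is to manufacture the candidate underlying set by quotienting \(X^*\) by a congruence \(\sim\): take the smallest equivalence relation on (a suitable sub-collection of) \(X^*\) that is compatible with finite disjoint unions, that identifies \(\fml{x} \uplus \{0\}\) with \(\fml{x}\) and identifies \(\fml{x}\) with \(\{x\}\) whenever \(\Sigma \fml{x} \keq x\), and that forces strong bracketing and strong flattening --- that is, relating a family to any of its (un)bracketings along partitions into already-summable subfamilies. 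Write \(q \colon X^* \to X^*/{\sim}\) for the quotient and \(\eta_X \colon X \to X^*/{\sim}\) for \(x \mapsto q(\{x\})\); finite disjoint union descends to a commutative monoid operation on the quotient with neutral element \(q(\emptyset)\), and \(\eta_X\) is a monoid homomorphism.

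The second step equips this quotient with a partial function \(\Sigma^F\). Consider the collection of all strong \(\Sigma\)-monoid structures supported on the quotient set for which \(\eta_X\) is a \(\Sigma\)-homomorphism and which are compatible with \(q\), and select the one with the smallest domain of summability. By Lemma~\ref{lem:intersection_of_sSm} the intersection of any collection of strong \(\Sigma\)-monoid structures on a common underlying set is again a strong \(\Sigma\)-monoid, so this minimal structure exists and is an object of \(\SCat{s}\); call it \(F(X)\). Two sanity checks are needed here: that the collection being intersected is nonempty --- otherwise \(\Sigma^F\) would be the empty partial function and the singleton axiom would fail --- for which it is enough to exhibit a single concrete strong \(\Sigma\)-monoid structure extending the data of \(X\), e.g. one in which a family of classes is declared summable exactly when it admits representatives whose disjoint union is \(\sim\)-equivalent to a designated family; and that \(F\) is functorial on morphisms, which follows formally once the universal property below is available.

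The third step is to verify \(F \dashv ({\SCat{s} \into \SCat{w}})\): precomposition with \(\eta_X\) should induce, for every strong \(\Sigma\)-monoid \(Y\), a natural bijection between \(\Sigma\)-homomorphisms \(F(X) \to Y\) and \(\Sigma\)-homomorphisms \(X \to Y\). One direction is composition with \(\eta_X\). For the other, given a \(\Sigma\)-homomorphism \(f \colon X \to Y\) with \(Y\) strong, note that every generating relation of \(\sim\) is an equality valid in \(Y\) --- this is precisely where subsummability, strong bracketing and strong flattening of \(Y\) are used --- so \(f\) factors as \(\bar f \circ \eta_X\) through \(q\) for a unique function \(\bar f\) on the underlying set of \(F(X)\). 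To see that \(\bar f\) is a \(\Sigma\)-homomorphism, compare \(\Sigma^F\) with the structure induced by pulling back \(Y\)'s summability along \(\bar f\) (factoring \(\bar f\) through its image and appealing to Lemma~\ref{lem:wSm_restriction}): this pulled-back structure makes \(\eta_X\) a \(\Sigma\)-homomorphism, hence belongs to the collection intersected in step two, so \(\Sigma^F\) is contained in it and \(\bar f\) preserves summable families and their sums. Uniqueness of \(\bar f\) and naturality in both variables are then immediate. The main obstacle is making the phrase ``smallest strong \(\Sigma\)-monoid on the quotient'' genuinely precise --- pinning down which classes of families survive in \(F(X)\) and which families must be declared summable, and confirming nonemptiness of the intersected collection --- together with the bookkeeping around the non-injective \(\bar f\), in particular ensuring the structure induced on the image of \(\bar f\) is still strong (or sidestepping this by working with the surjective part of the factorisation).
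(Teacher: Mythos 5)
Your overall strategy matches the paper's: quotient \(X^*\) by the congruence generated by replacing summable subfamilies with their sums, then carve out a minimal strong \(\Sigma\)-monoid structure by an intersection (Lemma~\ref{lem:intersection_of_sSm}) and verify the universal property. But there is a genuine gap in the second step, and it is exactly the point you flag as ``the main obstacle'' without resolving: you take the \emph{full} quotient \(X^*/{\sim}\) as the underlying set of \(F(X)\) and only intersect the partial functions \(\Sigma\) living on it. Intersecting structures on a fixed set never removes elements from the set, so every class \([\fml{x}]\) survives into \(F(X)\) --- including classes of families \(\fml{x}\) for which some \(\Sigma\)-homomorphism \(f \colon X \to G(Y)\) into a strong \(Y\) has \(f\fml{x}\) non-summable. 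For such a class the universal property breaks: your map \(\bar f\) must be total on \(F(X)\), but \(\eta_X\) is not surjective onto the quotient (its image consists only of classes of singletons), so ``factoring \(f\) through \(\eta_X\)'' does not determine \(\bar f\) on \([\fml{x}]\); the only canonical candidate is \(\Sigma f\fml{x}\), which is undefined. Concretely, if the minimal structure declares \(\{\eta_X(x_i)\}_I\) summable with sum \([\fml{x}]\) (as your nonemptiness witness, the totally-defined structure \(\Sigma\{[\fml{x}_i]\}_I = [\uplus_I \fml{x}_i]\), does), then \(\bar f\) is forced to send \([\fml{x}]\) to an undefined sum and \emph{existence} fails; if it does not, then \([\fml{x}]\) is unconstrained and \emph{uniqueness} fails. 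A test case: \(X = \Nset\) with only finite families summable, \(Y = \Nset\) with the (strong) ``finitely many nonzero entries'' structure, \(f = \id\), and the class \([\{1,1,1,\dots\}]\).

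The paper's construction avoids this by indexing the intersection over the homomorphisms themselves: for each \(f \colon X \to G(Y)\) it forms \(A_f \subseteq X^*/{\sim}\) consisting of the classes \([\fml{x}]\) with \(f\fml{x}\) summable in \(Y\), equips \(A_f\) with the induced strong structure, and takes \(F(X) = \bigcap_f A_f\) --- note that Lemma~\ref{lem:intersection_of_sSm} intersects the underlying sets as well as the \(\Sigma\)'s. This guarantees that \(\bar f[\fml{x}] := \Sigma f\fml{x}\) is defined for every class of \(F(X)\) and every \(f\), which is what makes the universal property go through. Your observation that \(\fml{x} \sim \fml{x'}\) implies \(\Sigma f\fml{x} \keq \Sigma f\fml{x'}\) in a strong target is correct and is the paper's equation~\eqref{eq:s_w_sim_implies}; it shows the partial map \([\fml{x}] \mapsto \Sigma f\fml{x}\) is well defined on classes, but its domain of definition is precisely \(A_f\), not the whole quotient --- which is the restriction your argument is missing. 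The remaining smaller issues you raise (well-definedness via the congruence property, smallness of the indexing collection, checking \(\bar f\) is a \(\Sigma\)-homomorphism) are handled essentially as you describe once the underlying set is corrected.
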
 \begin{proof}
  Let \(G \colon \SCat{s} \into \SCat{w}\) denote the embedding; its left adjoint \(F \colon \SCat{w} \to \SCat{s}\) is defined explicitly.
  Let \(X \in \SCat{w}\) be an arbitrary weak \(\Sigma\)-monoid and define a relation \(\leadsto\) on \(X^*\) such that for any two families \(\fml{x},\fml{x'} \in X^*\), we have that \(\fml{x} \leadsto \fml{x'}\) iff there is a partition of \(\fml{x}\) into subfamilies \(\fml{x} = \uplus_J \fml{x}_j\) such that each (possibly infinite) subfamily \(\fml{x}_j\) is summable and such that \(\fml{x'}\) is the family of their sums \(\fml{x'} = \{\Sigma \fml{x}_j\}_J\).
  The relation \(\leadsto\) is reflexive since any family may be partitioned into its singleton subfamilies.
  Let \(\sim\) be the equivalence closure of \(\leadsto\); then, \(\fml{x} \sim \fml{x'}\) iff there is a zig-zag chain of \(\leadsto\) relations such as:
  \[\begin{tikzcd}[column sep=tiny]
    & {\fml{a_1}} && {\fml{a_3}} & \ldots & {\fml{a_n}} \\
    {\fml{x}} && {\fml{a}_2} && \ldots && {\fml{x'}}
    \arrow[squiggly, from=1-2, to=2-1]
    \arrow[squiggly, from=1-2, to=2-3]
    \arrow[squiggly, from=1-4, to=2-3]
    \arrow[squiggly, from=1-4, to=2-5]
    \arrow[squiggly, from=1-6, to=2-5]
    \arrow[squiggly, from=1-6, to=2-7]
  \end{tikzcd}\]
  Notice that \(\sim\) is a congruence with respect to arbitrary disjoint union, \ie{} it satisfies:
  \begin{equation} \label{eq:weak_strong_sim_cong}
    \forall j \in J,\, \fml{x}_j \sim \fml{x'}_j \implies \uplus_J \fml{x}_j \sim \uplus_J \fml{x'}_j
  \end{equation}
  since a zig-zag chain from \(\uplus_J \fml{x}_j\) to \(\uplus_J \fml{x'}_j\) may be obtained by composing the chains relating each subfamily \(\fml{x}_j\) to \(\fml{x'}_j\).
  Let \(A\) be the quotient set \(X^*/{\sim}\) and let \(q \colon X \to A\) be the function that maps each \(x \in X\) to the equivalence class \([\{x\}] \in A\).

  Let \(Y \in \SCat{s}\) be an arbitrary strong \(\Sigma\)-monoid and let \(f \colon X \to G(Y)\) be an arbitrary \(\Sigma\)-homomorphism.
  Define \(\leadsto\) on \(Y^*\) as above; since \(f\) is a \(\Sigma\)-homomorphism we have that \(\fml{x} \leadsto \fml{x'}\) implies \(f\fml{x} \leadsto f\fml{x'}\) and, consequently,
  \begin{equation} \label{eq:sim_hom}
    \fml{x} \sim \fml{x'} \implies f\fml{x} \sim f\fml{x'}.
  \end{equation}
  Moreover, in a strong \(\Sigma\)-monoid \(\fml{y} \leadsto \fml{y'}\) implies \(\Sigma \fml{y} \keq \Sigma \fml{y'}\), since if \(\fml{y}\) is summable then \(\fml{y'}\) is summable due to strong bracketing whereas if \(\fml{y'}\) is summable then \(\fml{y}\) is summable due to strong flattening.
  Consequently, we have that for any two families \(\fml{x},\fml{x'} \in X^*\):
  \begin{equation} \label{eq:s_w_sim_implies}
    \fml{x} \sim \fml{x'} \implies \Sigma f\fml{x} \keq \Sigma f\fml{x'}.
  \end{equation}
  Let \(A_f\) be the subset of \(A\) defined as follows:
  \begin{equation*}
    A_f = \{[\fml{x}] \in A \mid f\fml{x} \text{ is summable in } Y\}.
  \end{equation*}
  Notice that, for any equivalence class \([\fml{x}] \in A_f\), each of its families \(\fml{x'} \in [\fml{x}]\) satisfies that \(f\fml{x'}\) is summable, due to the implication~\eqref{eq:s_w_sim_implies} and the fact that \(f\fml{x}\) is summable.
  Let \(\Sigma^f \colon A_f^* \pto A_f\) be the following partial function:
  \begin{equation*}
    \Sigma^f \{[\fml{x}_i]\}_I = \begin{cases}
      [\uplus_I \fml{x}_i] &\ifc [\uplus_I \fml{x}_i] \in A_f \\
      \undefined &\otherwise.
    \end{cases}
  \end{equation*}
  Notice that this definition is independent of the choice of representatives thanks to \(\sim\) being a congruence~\eqref{eq:weak_strong_sim_cong}.
  We must check that \((A_f,\Sigma^f)\) is a strong \(\Sigma\)-monoid.
  \begin{description}
    \item[Singleton.] Singleton families are trivially summable.
    \item[Subsummability.] If \(\{[\fml{x}_i]\}_I \in A_f^*\) is summable then \(\uplus_I f\fml{x}_i\) is summable in \(Y\) and so is \(\uplus_J f\fml{x}_j\) for any \(J \subseteq I\) due to subsummability in \(Y\). Thus, \(\{[\fml{x}_j]\}_J\) is summable in \(A_f\) and \(\Sigma^f\) satisfies subsummability.
    \item[Neutral element.] Follows immediately from subsummability.
    \item[Strong flattening.] Let \(I\) be an arbitrary set partitioned into \(I = \uplus_J I_j\) and, for each \(j \in J\), let \(\{[\fml{x}_i]\}_{I_j}\) be a summable family in \(A_f\), \ie{} \(\Sigma^f \{[\fml{x}_i]\}_{I_j} \keq [\uplus_{I_j} \fml{x}_i]\). Assume that the family \(\{[\uplus_{I_j} \fml{x}_i]\}_J \in A_f^*\) is summable, \ie{} \(\Sigma^f \{[\uplus_{I_j} \fml{x}_i]\}_J \keq [\uplus_J (\uplus_{I_j} \fml{x}_i)]\); then, due to associativity and commutativity of \(\uplus\) we have that \(\uplus_I \fml{x}_i = \uplus_J (\uplus_{I_j} \fml{x}_i)\) and, hence, \([\uplus_I \fml{x}_i] \in A_f\) so that \(\{[\fml{x}_i]\}_I\) is summable in \(A_f\).
    \item[Strong bracketing.] As in the case of flattening, this axiom follows from associativity and commutativity of disjoint union.
  \end{description}

  Let \(S\) be the set of all such strong \(\Sigma\)-monoids \((A_f,\Sigma^f)\).\footnote{Even though \(f \colon X \to G(Y)\) ranges over all \(Y \in \SCat{s}\), each \(A_f\) is by definition a subset of \(A\) which is small since \(X^*\) is small (see Proposition~\ref{prop:family_set}), hence, \(S\) is a (small) set.}
  Let \(F(X)\) be the strong \(\Sigma\)-monoid obtained as the intersection of all the members of \(S\) as per Lemma~\ref{lem:intersection_of_sSm}.
  Let \(\eta_X \colon X \to GF(X)\) be the function that maps each \(x \in X\) to \([\{x\}]\); such an equivalence class is present in every \(A_f\), so it is present in \(F(X)\).
  Notice that \(\eta_X\) is a \(\Sigma\)-homomorphism: if \(\fml{x} = \{x_i\}_I \in X^*\) is summable, then \(\fml{x} \leadsto \{\Sigma \fml{x}\}\) and \([\fml{x}] = [\{\Sigma \fml{x}\}]\), hence,
  \begin{equation*}
    \Sigma^\cap \{\eta_X(x_i)\}_I \keq [\uplus_I \{x_i\}] = [\fml{x}] = [\{\Sigma \fml{x}\}] = \eta_X(\Sigma \fml{x}).
  \end{equation*}
  On morphisms \(f \colon X \to X'\), let \(F(f)\) be the function that maps \([\fml{x}] \in F(X)\) to \([f\fml{x}] \in F(X')\); this is a \(\Sigma\)-homomorphism thanks to~\eqref{eq:sim_hom}.
  It is straightforward to check that this construction yields a functor \(F \colon \SCat{w} \to \SCat{s}\) and a natural transformation \(\eta\) whose components \(\eta_X\) were defined above.

  It remains to check that \(F\) is left adjoint to \(G\).
  Fix some arbitrary \(X \in \SCat{w}\), \(Y \in \SCat{s}\) and \(f \in \SCat{w}(X,G(Y))\).
  There is a unique \(\Sigma\)-homomorphism \(\bar{f} \colon F(X) \to Y\) making the diagram
  \[\begin{tikzcd}
    X && {G(Y)} \\
    && {GF(X)}
    \arrow["f", from=1-1, to=1-3]
    \arrow["{\eta_X}"', from=1-1, to=2-3]
    \arrow["{G(\bar{f})}"', dashed, from=2-3, to=1-3]
  \end{tikzcd}\]
  commute: the requirement that \(f = G(\bar{f}) \circ \eta_X\) imposes that \(\bar{f}\) maps each \(\eta_X(x) = [\{x\}]\) to \(f(x)\); the requirement that \(\bar{f}\) is a \(\Sigma\)-homomorphism imposes that, for any arbitrary equivalence class \([\{x_i\}_I] \in F(X)\),
  \begin{equation*}
    \bar{f} [\{x_i\}_I] = \bar{f} [\uplus_I \{x_i\}] = \bar{f} (\Sigma^\cap \{\eta_X(x_i)\}_I) = \Sigma \{f(x_i)\}_I.
  \end{equation*}
  The value of such a \(\Sigma\)-homomorphism \(\bar{f}\) is uniquely determined and, thus, it has been shown that \(F\) is left adjoint to \(G\), as claimed.
\end{proof}

\section{Deferred proofs}

\subsection{Supporting proofs for Section~\ref{sec:tensor}}
\label{sec:tensor_appendix}

\begin{lemma} \label{lem:bihom_set_functor_preserves_limits}
  For all \(X,Y \in \SCat{w}\), the functor \(\SCat{w}^{X,Y} \colon \SCat{w} \to \Set\) preserves limits.
\end{lemma}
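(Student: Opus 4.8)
The plan is to verify that $\SCat{w}^{X,Y}$ preserves small products and equalisers; since every small limit is constructed from these and $\SCat{w}$ is complete (Proposition~\ref{prop:preserve_limits_chain_colimits}), this suffices to conclude that it preserves all small limits. The organising observation, used in both cases, is that when a $\Sigma$-monoid $L$ is built by one of the explicit limit constructions of Proposition~\ref{prop:preserve_limits_chain_colimits} with limit projections $(p_i)$, a function $f \colon X \times Y \to L$ is $\Sigma$-bilinear (Definition~\ref{def:bihom}) precisely when every composite $p_i \circ f$ is, because summability in $L$ is detected projection-wise.

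\emph{Products.} Let $\{Z_i\}_{i \in I}$ be a small family in $\SCat{w}$, and let $(P,(\pi_i)_I)$ be the product with $\Sigma^\times$ given by the generalisation of~\eqref{eq:SCat_product} to arbitrary small $I$. I would show that $f \colon X \times Y \to P$ is $\Sigma$-bilinear iff each $\pi_i \circ f$ is. The forward implication is immediate, since each $\pi_i$ is a $\Sigma$-homomorphism and post-composing $f(x,-)$ or $f(-,y)$ with a $\Sigma$-homomorphism preserves the $\Sigma$-homomorphism property. For the converse, fix $x \in X$ and a summable family $\fml{y} = \{y_k\}_K$ with $\Sigma \fml{y} \keq y$; setting $\fml{p} = \{f(x,y_k)\}_K$, the hypothesis that each $\pi_i \circ f(x,-)$ is a $\Sigma$-homomorphism says $\pi_i\fml{p}$ is summable with $\Sigma \pi_i\fml{p} \keq \pi_i(f(x,y))$, so by~\eqref{eq:SCat_product} the family $\fml{p}$ is summable in $P$ with $\Sigma^\times \fml{p} = f(x,y)$; hence $f(x,-)$ is a $\Sigma$-homomorphism, and symmetrically $f(-,y)$. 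This equivalence exhibits $\SCat{w}^{X,Y}(P)$, together with the maps $\SCat{w}^{X,Y}(\pi_i)$, as the product $\prod_I \SCat{w}^{X,Y}(Z_i)$ in $\Set$.

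\emph{Equalisers.} Let $g,h \colon Z \to W$ be $\Sigma$-homomorphisms with equaliser $(E,e)$, where $e$ is the inclusion and $\Sigma^e$ is as in~\eqref{eq:SCat_equaliser}. Given a $\Sigma$-bilinear $f \colon X \times Y \to Z$ with $g \circ f = h \circ f$, it factors set-theoretically (uniquely, as $e$ is injective) as $f = e \circ \tilde f$, and I would check $\tilde f$ is $\Sigma$-bilinear: for $x \in X$ and summable $\fml{y} = \{y_k\}_K$ with $\Sigma \fml{y} \keq y$, the fact that $e \circ \tilde f(x,-) = f(x,-)$ is a $\Sigma$-homomorphism gives $\Sigma \{e(\tilde f(x,y_k))\}_K \keq e(\tilde f(x,y))$, which by~\eqref{eq:SCat_equaliser} is exactly $\Sigma^e \{\tilde f(x,y_k)\}_K \keq \tilde f(x,y)$; likewise in the other argument. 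Conversely, any $\Sigma$-bilinear map factoring through $e$ composes with $e$ to a $\Sigma$-bilinear map equalised by $g$ and $h$. Hence $\SCat{w}^{X,Y}(E)$ is precisely the equaliser in $\Set$ of $\SCat{w}^{X,Y}(g)$ and $\SCat{w}^{X,Y}(h)$.

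Putting the two cases together, $\SCat{w}^{X,Y}$ preserves small products and equalisers, and therefore all small limits. I expect the only step needing real care to be the ``only if'' halves above — that a family of elements of the product (respectively of the equaliser) is summable as soon as all of its projections are — which is precisely where the explicit descriptions of $\Sigma^\times$ in~\eqref{eq:SCat_product} and of $\Sigma^e$ in~\eqref{eq:SCat_equaliser} are indispensable; the remaining bookkeeping (that the induced bijections are the canonical comparison maps, and that preservation of products and equalisers yields preservation of all limits) is formal.
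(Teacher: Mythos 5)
Your proposal is correct and follows essentially the same route as the paper's proof: both verify preservation of small products and equalisers directly from the explicit descriptions of $\Sigma^\times$ and $\Sigma^e$, using the key fact that summability in the limit object is detected by the projections (respectively the inclusion), and then conclude by the standard result that a functor on a complete category preserving products and equalisers preserves all limits. The only cosmetic difference is that the paper packages the product case as an explicit pair of mutually inverse maps $m$ and $u$, whereas you phrase it as an ``iff'' characterisation of $\Sigma$-bilinearity into the product; the content is identical.
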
 \begin{proof}
  Let \(Z,W \in \SCat{w}\) be arbitrary objects, let \(f,g \in \SCat{w}(Z,W)\) be arbitrary \(\Sigma\)-homomorphisms and let \(E \in \SCat{w}\) be their equaliser constructed as in Proposition~\ref{prop:preserve_limits_chain_colimits}. Recall that \(E \subseteq Z\) and \(z \in E\) implies \(f(z) = g(z)\) so it is clear that any \(h \in \SCat{w}^{X,Y}(E)\) satisfies \(f \circ h = g \circ h\).
  Conversely, if a \(\Sigma\)-bilinear function \(h \colon X \times Y \to Z\) satisfies \(f \circ h = g \circ h\) then, for every element \(z \in \im(h)\), it holds that \(f(z) = g(z)\), so \(\im(h) \subseteq E\).
  Therefore, \(\SCat{w}^{X,Y}(E)\) is precisely the subset of \(\SCat{w}^{X,Y}(Z)\) such that for all \(h \in \SCat{w}^{X,Y}(E)\),
  \begin{equation*}
    \SCat{w}^{X,Y}(f)(h) = f \circ h = g \circ h = \SCat{w}^{X,Y}(g)(h)
  \end{equation*}
  \ie{} \(\SCat{w}^{X,Y}(E)\) is an equaliser of the diagram in \(\Set\).
  Thus, \(\SCat{w}^{X,Y}\) preserves equalisers.

  Let \(Z \times W \in \SCat{w}\) be the categorical product of two weak \(\Sigma\)-monoids and let \(\pi_l\) and \(\pi_r\) be the corresponding projections.
  Since the categorical product in \(\Set\) is given by the Cartesian product, there is a unique function \(m\) making the following diagram
  \[\begin{tikzcd}[column sep=small]
    && {\SCat{w}^{X,Y}(Z \times W)} \\
    \\
    {\SCat{w}^{X,Y}(Z)} && {\SCat{w}^{X,Y}(Z) \times \SCat{w}^{X,Y}(W)} && {\SCat{w}^{X,Y}(W)}
    \arrow["{\SCat{w}^{X,Y}(\pi_l)}"', from=1-3, to=3-1]
    \arrow["{\pi_l}", from=3-3, to=3-1]
    \arrow["m", dashed, from=1-3, to=3-3]
    \arrow["{\SCat{w}^{X,Y}(\pi_r)}", from=1-3, to=3-5]
    \arrow["{\pi_r}"', from=3-3, to=3-5]
  \end{tikzcd}\]
  commute in \(\Set\).
  Such a function \(m\) maps every \(\Sigma\)-bilinear function \(f \in \SCat{w}^{X,Y}(Z \times W)\) to the pair of \(\Sigma\)-bilinear functions \((\pi_l \circ f, \pi_r \circ f)\).
  Conversely, there is a function \(u \colon \SCat{w}^{X,Y}(Z) \times \SCat{w}^{X,Y}(W) \to \SCat{w}^{X,Y}(Z \times W)\) mapping each pair of \(\Sigma\)-bilinear functions \((g,h)\) to the function given below:
  \begin{equation*}
    k(x,y) = (g(x,y),h(x,y))
  \end{equation*}
  for every \(x \in X\) and \(y \in Y\).
  Notice that \(u\) is well-defined since \(k\) is a \(\Sigma\)-bilinear function:
  \begin{align*}
    k(\Sigma \fml{x},y) &= (g(\Sigma \fml{x},y),h(\Sigma \fml{x},y)) = (\Sigma g(\fml{x},y),\Sigma h(\fml{x},y)) \\
      &= \Sigma^\times (g(\fml{x},y), h(\fml{x},y)) = \Sigma^\times k(\fml{x},y)
  \end{align*}
  where \(g(\fml{x},y)\) is a shorthand for the family obtained after applying \(g(-,y)\) to each element in \(\fml{x}\).
  It is straightforward to check that \(u \circ m = \id\) and \(m \circ u = \id\) so that \(\SCat{w}^{X,Y}(Z \times W)\) is isomorphic to \(\SCat{w}^{X,Y}(Z) \times \SCat{w}^{X,Y}(W)\).
  Thus, it follows that \(\SCat{w}^{X,Y}(Z \times W)\) is a categorical product and \(\SCat{w}^{X,Y}\) preserves binary products.
  It is straightforward to generalise this argument to small products and, hence, \(\SCat{w}^{X,Y}\) preserves small products.

  A functor that preserves equalisers and small products and whose domain is a complete category automatically preserves all limits. Therefore, \(\SCat{w}^{X,Y}\) preserves limits, as claimed.
\end{proof}

\begin{lemma} \label{lem:SCatw_tensor}
 The comma category \(\comma{\{\bullet\}}{\SCat{w}^{X,Y}}\) has an initial object. Such an initial object is denoted \((p,X \otimes Y)\).
\end{lemma}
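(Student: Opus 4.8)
The plan is to realise the initial object via the solution-set form of the adjoint functor theorem, exactly as in the proof of Theorem~\ref{thm:HCMon_ft_adj}: a locally small category with all small limits that admits a weakly initial set has an initial object~\cite[Theorem~6.3.10]{Leinster}. First I would check the ambient hypotheses for the comma category $\comma{\{\bullet\}}{\SCat{w}^{X,Y}}$, whose objects are $\Sigma$-bilinear functions $X \times Y \to Z$ and whose morphisms are $\Sigma$-homomorphisms commuting with them. It is locally small because $\SCat{w}$ is. It is complete: a diagram in it is a compatible family of $\Sigma$-bilinear functions $X \times Y \to Z_d$, so its limit is carried by $\lim_d Z_d$ in $\SCat{w}$ (which exists by Proposition~\ref{prop:SCat*_locally_presentable}), equipped with the $\Sigma$-bilinear function induced by the cone, using that $\SCat{w}^{X,Y}$ preserves limits (Lemma~\ref{lem:bihom_set_functor_preserves_limits}).

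The real content is producing a weakly initial set, i.e.\ a small family of $\Sigma$-bilinear functions through which every $\Sigma$-bilinear function factors. For this I would introduce the notion of a \emph{sub-$\Sigma$-monoid} of a weak $\Sigma$-monoid $Z$: a subset $W \subseteq Z$ with $0 \in W$ that is closed under $\Sigma$, meaning $\Sigma\fml{w} \in W$ whenever $\fml{w} \in W^*$ and $\Sigma\fml{w}$ is defined in $Z$. By Lemma~\ref{lem:wSm_restriction} applied to the inclusion $W \hookrightarrow Z$, such a $W$ with the restricted partial function is again a weak $\Sigma$-monoid and the inclusion is a $\Sigma$-homomorphism. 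Arbitrary intersections of sub-$\Sigma$-monoids are sub-$\Sigma$-monoids, so for any $\Sigma$-bilinear $f \colon X \times Y \to Z$ there is a smallest sub-$\Sigma$-monoid $\langle f \rangle \subseteq Z$ containing $f(X \times Y)$; concretely it is the transfinite closure that starts from $f(X \times Y) \cup \{0\}$ and repeatedly adjoins $\Sigma\fml{w}$ for every summable $\fml{w}$ in the current stage, and this process stabilises at stage $\omega_1$ precisely because every family is countable and $\omega_1$ is regular. The corestriction $f' \colon X \times Y \to \langle f \rangle$ is still $\Sigma$-bilinear, and $f$ factors as $f'$ followed by the inclusion $\langle f \rangle \hookrightarrow Z$.

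Next I would bound $\lvert\langle f \rangle\rvert$: a routine cardinal-arithmetic estimate along the $\omega_1$-step closure, bounding the number of families added at each stage via Proposition~\ref{prop:family_set}, shows $\lvert\langle f \rangle\rvert \leq \kappa$ for a cardinal $\kappa$ depending only on $X$ and $Y$. Therefore the collection $S$ of all $\Sigma$-bilinear functions $X \times Y \to W$ with $W$ a weak $\Sigma$-monoid whose underlying set is a subset of $\kappa$ is a (small) set, and the factorisation above shows every object of $\comma{\{\bullet\}}{\SCat{w}^{X,Y}}$ receives a morphism from a member of $S$. Hence $S$ is weakly initial, and the theorem supplies an initial object $(p, X \otimes Y)$, with $p \colon X \times Y \to X \otimes Y$ the universal $\Sigma$-bilinear function.

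I expect the main obstacle to lie in the bookkeeping of the solution-set argument rather than in any single deep idea: confirming that the $\omega_1$-stage closure is genuinely closed under $\Sigma$ (so that it really is a sub-$\Sigma$-monoid), that its cardinality is bounded uniformly in $X$ and $Y$, and that Lemma~\ref{lem:wSm_restriction} indeed certifies the restricted structure — none hard in isolation, but collectively the delicate part. (An alternative staying inside the locally presentable formalism would present $X \otimes Y$ directly by generators $\{p(x,y)\}_{x \in X,\, y \in Y}$ together with the small set of $\Sigma$-bilinearity relations, but this amounts to the same cardinality bound in disguise.)
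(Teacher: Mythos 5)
Your proposal is correct and follows the same overall route as the paper: the comma category \(\comma{\{\bullet\}}{\SCat{w}^{X,Y}}\) is locally small and complete (via Lemma~\ref{lem:bihom_set_functor_preserves_limits} and completeness of \(\SCat{w}\)), so an initial object follows once a weakly initial set is exhibited. Where you diverge is in the construction of that set. The paper factors a \(\Sigma\)-bilinear \(f \colon X \times Y \to W\) through the quotient \(Z/{\sim}\) of the set \(Z \subseteq (X\times Y)^*\) of families made summable by \(f\), identified along ``same sum under \(f\)''; the injective map \([\fml{z}] \mapsto \Sigma f\fml{z}\) then hands the quotient a weak \(\Sigma\)-monoid structure via Lemma~\ref{lem:wSm_restriction}, and smallness of the solution set is immediate because everything lives on quotients of subsets of the fixed small set \((X\times Y)^*\) (Proposition~\ref{prop:family_set}). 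You instead carve out the sub-\(\Sigma\)-monoid of \(W\) generated by \(\im(f)\) via a transfinite closure up to \(\omega_1\) and bound its cardinality by arithmetic. Both work, but your closure is doing more than is needed: Lemma~\ref{lem:wSm_restriction} applies to \emph{any} subset containing \(0\), so already \(f(X\times Y)\cup\{0\}\) with the restricted partial function is a weak \(\Sigma\)-monoid, the corestriction of \(f\) to it is \(\Sigma\)-bilinear (the sum of each family \(f(x,\fml{y})\) lands back in the image of \(f\)), and the inclusion is a \(\Sigma\)-homomorphism. Closure under \(\Sigma\) is not required for the factorisation, so the \(\omega_1\)-stage iteration and the attendant cardinality estimate can be dispensed with entirely; the paper's one-step construction (equivalently, the set of all sums \(\Sigma f\fml{z}\)) sits between these two extremes and keeps the smallness bookkeeping trivial. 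The rest of your argument --- transporting structure along a bijection into a fixed cardinal, and checking weak initiality --- is sound.
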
 \begin{proof}
  The category \(\SCat{w}\) is complete (see Proposition~\ref{prop:SCat*_locally_presentable}) and
  Lemma~\ref{lem:bihom_set_functor_preserves_limits} establishes that the functor \(\SCat{w}^{X,Y}\) preserves limits.
  Consequently, the comma category \(\comma{\{\bullet\}}{\SCat{w}^{X,Y}}\) is complete~\cite[Lemma A.2]{Leinster} and, since \(\SCat{w}\) is locally small, it follows that the comma category is locally small.
  Then, it suffices to provide a weakly initial set to prove that the comma category \(\comma{\{\bullet\}}{\SCat{w}^{X,Y}}\) has an initial object~\cite[Lemma A.1]{Leinster}.
  The elements of such a weakly initial set ought to be functions of type \(\{\bullet\} \to \SCat{w}^{X,Y}(A)\) for some \(A \in \SCat{w}\); but a function with singleton domain simply selects an element in its codomain, so it is equivalent to think of the weakly initial set as a collection of \(\Sigma\)-bilinear functions \(X \times Y \xto{q} A\).
  Let \(S\) be the collection of all \(\Sigma\)-bilinear functions with domain \(X \times Y\) and whose codomain may be any weak \(\Sigma\)-monoid whose underlying set is a quotient of a subset of \((X \times Y)^*\).
  Notice that \(S\) is small since all quantifiers in its definition are with respect to fixed sets \(X\) and \(Y\)and \((X \times Y)^*\) is small (see Proposition~\ref{prop:family_set}).

  Let \(W \in \SCat{w}\) and let \(f \colon X \times Y \to W\) be a \(\Sigma\)-bilinear function.
  Define a subset \(Z \subseteq (X \times Y)^*\) as follows:
  \begin{equation*}
    Z = \{\fml{z} \in (X \times Y)^* \mid f\fml{z} \text{ is summable}\}
  \end{equation*}
  and define an equivalence relation \(\sim\) on \(Z\) where:
  \begin{equation*}
    \fml{z} \sim \fml{z'} \iff \Sigma f\fml{z} = \Sigma f\fml{z'}.
  \end{equation*}
  Define a function \(\bar{f} \colon Z/{\sim} \to W\) so that each equivalence class \([\fml{z}] \in Z/{\sim}\) is mapped to \(\Sigma f\fml{z}\); it is straightforward to check that \(\bar{f}\) is injective and, moreover, \([\emptyset] \in Z/{\sim}\), so the neutral element \(0 \in W\) is in the image of \(\bar{f}\).
  Therefore, according to Lemma~\ref{lem:wSm_restriction}, \(Z/{\sim}\) may be endowed with a partial function \(\Sigma^{\bar{f}}\) defined as follows for every family \(\{[\fml{z}_i]\}_I \in (Z/{\sim})^*\):
  \begin{equation*}
    \Sigma^{\bar{f}} \{[\fml{z}_i]\}_I = \begin{cases}
      [\fml{z}] &\ifc \exists [\fml{z}] \in Z/{\sim} \colon \Sigma \{\bar{f}[\fml{z}_i]\}_I \keq \bar{f}[\fml{z}] \\
      \undefined &\otherwise
    \end{cases}
  \end{equation*}
  so that \((Z/{\sim},\Sigma^{\bar{f}})\) is a weak \(\Sigma\)-monoid and \(\bar{f}\) is a \(\Sigma\)-homomorphism.
  Finally, define the function \(q \colon X \times Y \to Z/{\sim}\) as the composite
  \begin{equation*}
    q = X \times Y \xto{\{-\}} Z \xto{[-]} Z/{\sim}
  \end{equation*}
  where \(\{-\}\) is the function mapping each \((x,y) \in X \times Y\) to the singleton family \((\{(x,y)\})\) and \([-]\) is the quotient map.
  It is straightforward to check that \(f = \bar{f} \circ q\) and, for any \(x \in X\) and any summable family \(\{y_i\}_I \in Y^*\),
  \begin{equation*}
    \Sigma \{\bar{f} q(x,y_i)\}_I = \Sigma \{f(x,y_i)\}_I \keq f(x,\Sigma \{y_i\}_I) = \bar{f} q (x,\Sigma \{y_i\}_I)
  \end{equation*}
  since \(f\) is \(\Sigma\)-bilinear, implying that the family \(\{q(x,y_i)\}_I\) is summable in \((Z/{\sim},\Sigma^{\bar{f}})\) with:
  \begin{equation*}
    \Sigma^{\bar{f}} \{q(x,y_i)\}_I \keq q(x,\Sigma \{y_i\}_I).
  \end{equation*}
  A similar result holds if we fix \(y \in Y\) instead and let \(\fml{x} \in X^*\) be an arbitrary summable family, thus, \(q\) is a \(\Sigma\)-bilinear function.

  In summary, it has been shown that \(Z/{\sim}\) is an object in \(\SCat{w}\) and that \(X \times Y \xto{q} Z/{\sim}\) is a \(\Sigma\)-bilinear function, hence, an element of \(S\). Moreover, \(\bar{f}\) is a \(\Sigma\)-homomorphism and \(f = \bar{f} \circ q\), implying that \(\bar{f}\) is a valid morphism from \(q\) to \(f\) in in the comma category.
  This construction may be reproduced for any \(\Sigma\)-bilinear function \(f \colon X \times Y \to W\), so it follows that \(S\) is a weakly initial set.
  Thus, the comma category \(\comma{\{\bullet\}}{\SCat{w}^{X,Y}}\) has an initial object, as claimed.
\end{proof}

\begin{lemma} \label{lem:SCat_internal_hom}
  For objects \(X,Y \in \SCat{w}\), the hom-set \([X,Y]\) may be endowed with a partial function \(\Sigma^\to \colon [X,Y]^* \pto [X,Y]\) defined as:
  \begin{equation*}
    \Sigma^\to \fml{f} = \begin{cases}
      s_\fml{f} &\ifc s_\fml{f} \in [X,Y] \\
      \undefined &\otherwise
    \end{cases}
  \end{equation*}
  for every family \(\fml{f} = \{f_i\}_I \in [X,Y]^*\), where \(s_\fml{f}(x) = \Sigma \{f_i(x)\}_I\) for each \(x \in X\).
  Then, \(([X,Y], \Sigma^\to) \in \SCat{w}\).
\end{lemma}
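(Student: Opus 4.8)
The plan is to verify the four axioms of Definition~\ref{def:wSm} for $([X,Y],\Sigma^\to)$ by reducing each of them to the corresponding axiom of $Y$ applied pointwise at every $x \in X$. The one piece of bookkeeping needed throughout is the following reformulation of the definition of $\Sigma^\to$: writing $\fml{f}(x)$ for the family $\{f_i(x)\}_I$ obtained by evaluating $\fml{f} = \{f_i\}_I \in [X,Y]^*$ at a point $x$, the value $\Sigma^\to\fml{f}$ is defined and equal to $g$ exactly when (i) each family $\fml{f}(x)$ is summable in $Y$, so $s_\fml{f}(x) = \Sigma\fml{f}(x)$ is a well-defined element of $Y$ for every $x$, and (ii) the resulting function $s_\fml{f} = g$ is a $\Sigma$-homomorphism $X \to Y$.

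The singleton axiom is immediate: the singleton axiom in $Y$ gives $s_{\{f\}}(x) = \Sigma\{f(x)\} = f(x)$, so $s_{\{f\}} = f \in [X,Y]$ and $\Sigma^\to\{f\} \keq f$. For bracketing and flattening I would start from a partition $\fml{f} = \uplus_J \fml{f}_j$ of summable families $\fml{f}_j$; since each $\fml{f}_j$ is summable, $s_{\fml{f}_j}$ is defined, so for every $x$ the family $\fml{f}_j(x)$ is summable in $Y$ with $\Sigma\fml{f}_j(x) = (\Sigma^\to\fml{f}_j)(x)$, and $\fml{f}(x) = \uplus_J \fml{f}_j(x)$. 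If each $\fml{f}_j$ is finite then so is each $\fml{f}_j(x)$, so assuming $\Sigma^\to\fml{f} \keq g$ --- equivalently $\Sigma\fml{f}(x) \keq g(x)$ for all $x$ --- the bracketing axiom in $Y$ gives $\Sigma\{(\Sigma^\to\fml{f}_j)(x)\}_J \keq g(x)$, that is $s_{\{\Sigma^\to\fml{f}_j\}_J} = g \in [X,Y]$, which is exactly $\Sigma^\to\{\Sigma^\to\fml{f}_j\}_J \keq g$. The flattening case is dual: with $J$ finite and $\Sigma^\to\{\Sigma^\to\fml{f}_j\}_J \keq g$, \ie{} $\Sigma\{(\Sigma^\to\fml{f}_j)(x)\}_J \keq g(x)$ for all $x$, the flattening axiom in $Y$ gives $\Sigma\fml{f}(x) \keq g(x)$ for all $x$, hence $s_\fml{f} = g \in [X,Y]$ and $\Sigma^\to\fml{f} \keq g$.

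The neutral element axiom is the only one that is not a single pointwise application of an axiom of $Y$, so I expect it to be the main obstacle. First I would check that the constant function $0_{[X,Y]}$ with value $0 \in Y$ is genuinely a $\Sigma$-homomorphism: for any summable $\{x_i\}_I \in X^*$ the all-zero family $\{0\}_I$ is summable in $Y$ with sum $0$ by Proposition~\ref{prop:neutral_element}, so $0_{[X,Y]} \in [X,Y]$; since $s_\emptyset = 0_{[X,Y]}$ this gives $\Sigma^\to\emptyset \keq 0_{[X,Y]}$. The subtler point is that deleting the copies of the zero \emph{morphism} from a summable family $\fml{f} = \{f_i\}_I$ is not literally deleting zero \emph{values} from each $\fml{f}(x)$: putting $J = \{i : f_i \neq 0_{[X,Y]}\}$ and $K = \{i : f_i(x) \neq 0\} \subseteq J$, the neutral element axiom together with Proposition~\ref{prop:neutral_element} in $Y$ makes $\{f_i(x)\}_K$ summable with $\Sigma\{f_i(x)\}_K \keq s_\fml{f}(x)$, and re-adjoining the zeros indexed by $J \setminus K$ (again by Proposition~\ref{prop:neutral_element}) gives $\Sigma\{f_i(x)\}_J \keq s_\fml{f}(x)$. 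Hence $s_{\fml{f}_\emptyset} = s_\fml{f} \in [X,Y]$, so $\fml{f}_\emptyset$ is summable with $\Sigma^\to\fml{f}_\emptyset \keq \Sigma^\to\fml{f}$. With all four axioms established, $([X,Y],\Sigma^\to)$ is a weak $\Sigma$-monoid.
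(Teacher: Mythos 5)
Your proof is correct and follows essentially the same route as the paper's: reduce each axiom of Definition~\ref{def:wSm} for $([X,Y],\Sigma^\to)$ to the corresponding axiom of $Y$ applied pointwise at each $x\in X$. Your treatment of the neutral element axiom is in fact more careful than the paper's, which dismisses as ``immediate'' precisely the subtlety you isolate — that a non-zero morphism $f_i$ may still satisfy $f_i(x)=0$ at some points, so one must first delete all zero values and then re-adjoin those coming from non-zero morphisms, both steps justified by Proposition~\ref{prop:neutral_element} in $Y$.
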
 \begin{proof}
  Recall that all \(\Sigma\)-homomorphisms are total functions, so the condition \(s_\fml{f} \in [X,Y]\) imposes that \(\Sigma \fml{f}(x)\) is defined for all \(x \in X\).
  It is immediate that for every singleton family \(\{f\} \in [X,Y]^*\) the corresponding function \(s_{\{f\}}\) maps every \(x \in X\) to \(f(x)\), so \(\Sigma^\to \{f\} \keq f\) as required by the singleton axiom.
  Similarly, it is immediate that the function \(s_\emptyset\) maps every \(x \in X\) to the neutral element \(0 \in Y\), so the empty family is summable in \(([X,Y],\Sigma^\to)\), with the neutral element being \(s_\emptyset\).
  Moreover, let \(\fml{f} \in [X,Y]^*\) be a summable family and let \(\fml{f}_\emptyset\) be the subfamily where all occurrences of \(s_\emptyset\) have been removed; it is immediate that \(\fml{f}_\emptyset\) is summable since \(s_\emptyset\) only contributes to the sum of \(\fml{f}(x) \in Y^*\) by adding a \(0\), which may be disregarded thanks to the neutral element axiom in \(Y\).
  Consequently, the neutral element axiom is satisfied by \(\Sigma^\to\); it remains to prove the bracketing and flattening axioms.
  Let \(\{\fml{f}_j \in [X,Y]^*\}_J\) be a collection of summable families and let \(\fml{f} = \uplus_J \fml{f}_j\).
  Assume that \(\fml{f}_j\) is a finite family for every \(j \in J\) and assume that \(\fml{f}\) is summable; to prove bracketing we must show that the family \(\fml{g} = \{\Sigma^\to \fml{f}_j\}_J \in [X,Y]^*\) is summable and \(s_\fml{g} = s_\fml{f}\).
  This is straightforward since for every \(x \in X\) we have that:
  \begin{equation*}
    s_\fml{g}(x) = \Sigma \{(\Sigma^\to \fml{f}_j)(x)\}_J = \Sigma \{\Sigma \fml{f}_j(x)\}_J \keq \Sigma \fml{f}(x) = s_\fml{f}(x)
  \end{equation*}
  where the \(\keq\) step corresponds to bracketing in \(Y\).
  Therefore, \(s_\fml{g} \in [X,Y]\) is implied by \(s_\fml{f} \in [X,Y]\) --- which holds due to \(\fml{f}\) being summable --- and, hence, \(\fml{g} = \{\Sigma^\to \fml{f}_j\}_J\) is summable with its sum matching that of \(\fml{f}\).
  Flattening is proven via the same argument, this time using flattening in \(Y\) instead of bracketing.
  Consequently, if \(Y \in \SCat{w}\) then \([X,Y] \in \SCat{w}\).
\end{proof}

\begin{lemma} \label{lem:SCat_3_tensor}
  For arbitrary objects \(X,Y,Z,W \in \SCat{w}\), let \(f \colon (X \times Y) \times Z \to W\) be an arbitrary \(\Sigma\)-trilinear function.
  Then, there is a unique \(\Sigma\)-homomorphism \((X \otimes Y) \otimes Z \to W\) making the diagram
  \[\begin{tikzcd}
    {(X \times Y) \times Z} & W \\
    & {(X \otimes Y) \otimes Z}
    \arrow[dashed, from=2-2, to=1-2]
    \arrow["f", from=1-1, to=1-2]
    \arrow["{p\circ(p \times \id)}"', from=1-1, to=2-2]
  \end{tikzcd}\]
  commute in \(\Set\).
\end{lemma}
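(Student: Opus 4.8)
The plan is to construct the factorisation in two stages, invoking the universal property of the binary tensor product (Lemma~\ref{lem:SCatw_tensor_main}) twice, and to settle the one delicate point by a ``the image of $p$ generates everything'' argument, in the spirit of the treatment of the inverse unitor in Lemma~\ref{lem:SCatw_tensor_monoidal}.

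\emph{Stage one.} First I would fix $z \in Z$ and observe that $f(-,-,z) \colon X \times Y \to W$ is $\Sigma$-bilinear, since its partial applications are $f(-,y,z)$ and $f(x,-,z)$, which are $\Sigma$-homomorphisms by $\Sigma$-trilinearity of $f$. Lemma~\ref{lem:SCatw_tensor_main} then yields a unique $\Sigma$-homomorphism $g_z \colon X \otimes Y \to W$ with $g_z \circ p = f(-,-,z)$, i.e.\ $g_z(p(x,y)) = f(x,y,z)$. Assembling these, define $g \colon (X \otimes Y) \times Z \to W$ by $g(t,z) = g_z(t)$; this is a genuine total function because each $g_z$ is total.

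\emph{Stage two.} Next I would check that $g$ is $\Sigma$-bilinear. In the first coordinate this is immediate, as $g(-,z) = g_z$ is a $\Sigma$-homomorphism by construction. The work lies in showing $g(t,-) \colon Z \to W$ is a $\Sigma$-homomorphism for \emph{every} $t \in X \otimes Y$: for $t = p(x,y)$ this reduces to $f(x,y,-)$ being a $\Sigma$-homomorphism, which is part of trilinearity, but a general $t$ need not lie in $\im(p)$, and a direct argument through the weak bracketing and flattening axioms stumbles on their finiteness side-conditions. Instead I would set $V = \{\, t \in X \otimes Y \mid g(t,-) \text{ is a }\Sigma\text{-homomorphism}\,\}$. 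Since $\im(p) \subseteq V$ we have $0 = p(0,0) \in V$, so Lemma~\ref{lem:wSm_restriction} equips $V$ along its inclusion $\iota \colon V \to X \otimes Y$ with a weak $\Sigma$-monoid structure $\Sigma^\iota$ making $\iota$ a $\Sigma$-homomorphism. Moreover $p$ corestricts to $p' \colon X \times Y \to V$, and $p'$ is again $\Sigma$-bilinear: its partial applications are corestrictions of the $\Sigma$-homomorphisms $p(x,-)$ and $p(-,y)$, whose images already lie in $\im(p) \subseteq V$, so by the definition of $\Sigma^\iota$ every family summed by $p(x,-)$ or $p(-,y)$ is still summed, with the same value, inside $(V,\Sigma^\iota)$. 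By the universal property of $X \otimes Y$ there is a $\Sigma$-homomorphism $\overline{p'} \colon X \otimes Y \to V$ with $\iota \circ \overline{p'} \circ p = \iota \circ p' = p$; since $\id_{X \otimes Y}$ also satisfies this identity, uniqueness forces $\iota \circ \overline{p'} = \id$, so $\iota$ is surjective and $V = X \otimes Y$. Hence $g$ is $\Sigma$-bilinear.

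\emph{Conclusion.} Applying Lemma~\ref{lem:SCatw_tensor_main} once more, now to $g$, gives a unique $\Sigma$-homomorphism $\bar g \colon (X \otimes Y) \otimes Z \to W$ with $\bar g \circ p = g$; then $\bar g \circ (p \circ (p \times \id))$ sends $((x,y),z)$ to $g(p(x,y),z) = g_z(p(x,y)) = f(x,y,z)$, which is the desired commuting triangle in $\Set$. For uniqueness, if $h \colon (X \otimes Y) \otimes Z \to W$ also makes the triangle commute, then $h \circ p$ is $\Sigma$-bilinear and agrees with $g$ on every pair $(p(x,y),z)$; for fixed $z$, both $(h \circ p)(-,z)$ and $g_z$ are $\Sigma$-homomorphisms $X \otimes Y \to W$ whose precomposition with $p$ equals $f(-,-,z)$, so they coincide by uniqueness for $X \otimes Y$. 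Thus $h \circ p = g = \bar g \circ p$, and uniqueness for $(X \otimes Y) \otimes Z$ gives $h = \bar g$. The only nontrivial step in all of this is the passage from ``$g(t,-)$ is a $\Sigma$-homomorphism for $t \in \im(p)$'' to the same for arbitrary $t$; everything else is routine diagram-chasing together with the two universal properties.
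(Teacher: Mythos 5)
Your proof is correct, but it takes a genuinely different route from the paper's. The paper curries: it shows the evaluation map $\mathrm{ev} \colon [Z,W] \times Z \to W$ is $\Sigma$-bilinear, checks that $\Lambda f \colon X \times Y \to [Z,W]$, $(x,y) \mapsto f(x,y,-)$ is $\Sigma$-bilinear into the internal hom of Lemma~\ref{lem:SCat_internal_hom}, lifts it to $\overline{\Lambda f} \colon X \otimes Y \to [Z,W]$, and then factors $\mathrm{ev} \circ (\overline{\Lambda f} \times \id)$ through $p$ a second time. The point you correctly identify as delicate --- that $g(t,-)$ must be a $\Sigma$-homomorphism for \emph{every} $t \in X \otimes Y$, not just $t \in \im(p)$ --- is handled there for free, because $\overline{\Lambda f}$ lands in $[Z,W]$, whose elements are $\Sigma$-homomorphisms by definition. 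Your alternative resolves the same point by a generation argument: the subset $V$ of elements on which second-slot linearity holds contains $\im(p)$ and $0$, inherits a weak $\Sigma$-monoid structure by Lemma~\ref{lem:wSm_restriction}, receives a $\Sigma$-bilinear corestriction $p'$ of $p$, and the resulting retraction $\iota \circ \overline{p'} = \id$ (forced by uniqueness in the universal property, exactly the trick the paper uses for $\lambda^{-1}$ in Lemma~\ref{lem:SCatw_tensor_monoidal}) shows $V = X \otimes Y$. Each step of this checks out, including $p(0,0)=0$ and the summability transfer into $(V,\Sigma^\iota)$. What the paper's route buys is economy --- it reuses the internal hom, which is needed anyway for monoidal closure; what yours buys is self-containment (no internal hom required) plus a reusable fact of independent interest, namely that $\im(p)$ generates $X \otimes Y$ in the sense that any restriction of $X \otimes Y$ \`a la Lemma~\ref{lem:wSm_restriction} containing $\im(p)$ is all of $X \otimes Y$. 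Your uniqueness argument is also sound and essentially matches the paper's.
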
 \begin{proof}
  Let \(\mathrm{ev} \colon [Z,W] \times Z \to W\) be the function that maps each pair \((h,z)\) to \(h(z)\).
  Let \(h \in [Z,W]\) be an arbitrary \(\Sigma\)-homomorphism and \(\fml{z} = \{z_i\}_I \in Z^*\) an arbitrary summable family, then:
  \begin{equation*}
    \mathrm{ev}(h,\Sigma \fml{z}) = h(\Sigma \fml{z}) \keq \Sigma h\fml{z} = \Sigma \{\mathrm{ev}(h,z_i)\}_I.
  \end{equation*}
  Similarly, let \(\fml{h} = \{h_i\}_I \in [Z,W]^*\) be an arbitrary summable family and fix an arbitrary \(z \in Z\), then:
  \begin{equation*}
    \mathrm{ev}(\Sigma^\to \fml{h},z) = \mathrm{ev}(s_\fml{h},z) = s_\fml{h}(z) = \Sigma \{h_i(z)\}_I = \Sigma \{\mathrm{ev}(h_i,z)\}_I.
  \end{equation*}
  Therefore, \(\mathrm{ev}\) is a \(\Sigma\)-bilinear function.
  Let \(f \colon (X \times Y) \times Z \to W\) be a \(\Sigma\)-trilinear function.
  Since \(\Set\) is Cartesian closed, there is a unique function \(\Lambda{f} \colon X \times Y \to [Z,W]\) making the diagram
  \begin{equation} \label{diag:SCat_currying}
    \begin{tikzcd}
      {[Z,W] \times Z} \\
      {(X \times Y) \times Z} & W
      \arrow["f"', from=2-1, to=2-2]
      \arrow["{\Lambda{f} \times \id}", dashed, from=2-1, to=1-1]
      \arrow["{\mathrm{ev}}", from=1-1, to=2-2]
    \end{tikzcd}
  \end{equation}
  commute in \(\Set\).
  The function \(\Lambda{f}\) maps each pair \((x,y) \in X \times Y\) to the function \(f(x,y,-)\), which is a \(\Sigma\)-homomorphism since \(f\) is \(\Sigma\)-trilinear; we now show that \(\Lambda{f}\) is \(\Sigma\)-bilinear.
  Fix \(x \in X\) and let \(\fml{y} = \{y_i\}_I \in Y^*\) be an arbitrary summable family; then, for every \(z \in Z\):
  \begin{align*}
    \Sigma \{\Lambda{f}(x,y_i)(z)\}_I &= \Sigma \{f(x,y_i,z)\}_I  && \text{(definition of \(\Lambda{f}\))} \\
      &\keq f(x,\Sigma \fml{y},z)  && \text{(\(f\) is \(\Sigma\)-trilinear)} \\
      &= \Lambda{f}(x,\Sigma \fml{y})(z)  && \text{(definition of \(\Lambda{f}\))}
  \end{align*}
  with \(\Lambda{f}(x,\Sigma \fml{y})\) a \(\Sigma\)-homomorphism, as previously discussed.
  Therefore,
  \begin{equation*}
    \Sigma^\to \{\Lambda{f}(x,y_i)\}_I \keq \Lambda{f}(x,\Sigma \fml{y}).
  \end{equation*}
  The same can be said for summable families \(\fml{x} \in X^*\) if \(y \in Y\) is fixed instead, hence, \(\Lambda{f}\) is a \(\Sigma\)-bilinear function.

  Since both \(\Lambda{f}\) and \(\mathrm{ev}\) are a \(\Sigma\)-bilinear functions, the diagram
  \[\begin{tikzcd}[column sep=large]
    & {[Z,W] \times Z} \\
    {(X \otimes Y) \times Z} & {(X \times Y) \times Z} & W \\
    & {(X \otimes Y) \otimes Z}
    \arrow["f"{pos=0.4}, from=2-2, to=2-3]
    \arrow["{\Lambda{f} \times \id}"{pos=0.4}, dashed, from=2-2, to=1-2]
    \arrow["{\mathrm{ev}}", from=1-2, to=2-3]
    \arrow["{p \times \id}"', from=2-2, to=2-1]
    \arrow["{\overline{\Lambda{f}} \times \id}", dashed, from=2-1, to=1-2]
    \arrow["p"', from=2-1, to=3-2]
    \arrow[dashed, from=3-2, to=2-3]
  \end{tikzcd}\]
  commutes in \(\Set\) with each dashed arrow denoting uniqueness.
  The top right triangle is diagram~\eqref{diag:SCat_currying} which was already established to commute.
  Because \(\Lambda{f}\) is a \(\Sigma\)-bilinear function, Lemma~\ref{lem:SCatw_tensor} implies that the top left triangle commutes, where \(\overline{\Lambda{f}}\) is a \(\Sigma\)-homomorphism.
  Considering that \(\mathrm{ev}\) is a \(\Sigma\)-bilinear function, it is straightforward to check that \(\mathrm{ev} \circ (\overline{\Lambda{f}} \times \id)\) is also a \(\Sigma\)-bilinear function; then Lemma~\ref{lem:SCatw_tensor} implies that the outer edges of the diagram commute, with the corresponding unique function \((X \otimes Y) \otimes Z \to W\) being a \(\Sigma\)-homomorphism.
  Consequently, the triangle at the bottom of the diagram commutes.
  Notice that if we were to find another \(\Sigma\)-homomorphism \(g \colon (X \otimes Y) \otimes Z \to W\) making the bottom triangle commute, such \(g\) would also make the outer edges of the diagram commute but, according to Lemma~\ref{lem:SCatw_tensor}, there is only one such \(\Sigma\)-homomorphism. Therefore, the \(\Sigma\)-homomorphism \((X \otimes Y) \otimes Z \to W\) from the claim has been shown to exist and be unique, completing the proof.
\end{proof}

\subsection{Supporting proofs for Section~\ref{sec:HausCMon->SCatft}}
\label{sec:HausCMon->SCatft_appendix}

\begin{proposition}
  \(\HausCMon\) is a complete category.
\end{proposition}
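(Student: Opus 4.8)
The plan is to construct all small products and all equalisers in $\HausCMon$ explicitly, and then invoke the standard fact that a category possessing these two classes of limits has all small limits.

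For products, given a small family $\{(X_k,\tau_k,+_k)\}_{k \in K}$ of Hausdorff commutative monoids, I would take the set $\prod_{k} X_k$ equipped with the product topology and componentwise addition. Componentwise addition makes $\prod_k X_k$ a commutative monoid whose unit is the tuple of units; a product of Hausdorff spaces is Hausdorff; and the addition map on $\prod_k X_k$ is continuous because, by the universal property of the product topology, it suffices that each composite with a projection $\pi_k$ is continuous, and that composite is $+_k$ precomposed with the (continuous) pairing of $\pi_k$-projections. The projections $\pi_k$ are continuous monoid homomorphisms, so this is a cone; and for any cone $(q_k \colon A \to X_k)_{k \in K}$ in $\HausCMon$ the unique set map $A \to \prod_k X_k$ is a monoid homomorphism (since it is so componentwise) and continuous (again by the universal property of the product topology), so it is the categorical product.

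For equalisers, given continuous monoid homomorphisms $f,g \colon X \to Y$, I would take $E = \{x \in X \mid f(x) = g(x)\}$ with the subspace topology inherited from $X$. Because $f$ and $g$ are monoid homomorphisms, $E$ contains the unit and is closed under addition, hence is a submonoid of $X$; a subspace of a Hausdorff space is Hausdorff; and addition restricts to a continuous map $E \times E \to E$. The inclusion $e \colon E \into X$ is then a continuous monoid homomorphism equalising $f$ and $g$, and any morphism of $\HausCMon$ into $X$ that equalises $f$ and $g$ factors through $E$ via a map that is automatically a continuous monoid homomorphism, since $E$ carries the subspace topology and its operations are restrictions of those of $X$.

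Having both small products and equalisers of pairs, completeness of $\HausCMon$ follows from the usual construction of limits out of products and equalisers. I do not expect a serious obstacle: every verification reduces to the universal properties of the product and subspace topologies together with the elementary facts that products and subspaces of Hausdorff spaces are Hausdorff. The one point worth phrasing carefully is the continuity of the monoid operation on the product, which is why I route it through the projections; equivalently, one can observe that the forgetful functors $\HausCMon \to \Top$ and $\HausCMon \to \CMon$ jointly create these limits.
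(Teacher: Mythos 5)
Your proposal is correct and follows essentially the same route as the paper: both construct equalisers via the subspace topology and restricted addition, construct small products via the product topology and componentwise addition, verify in each case that the result is again a Hausdorff commutative monoid with the universal property, and conclude completeness from the existence of products and equalisers. Your closing remark that the forgetful functors to $\Top$ and $\CMon$ jointly create these limits is precisely the observation the paper uses when it notes that the constructed objects are simultaneously limits in $\TopHaus$ and $\CMon$.
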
 \begin{proof}
  It is well known that the category of commutative monoids and monoid homomorphisms \(\CMon\) is complete. It is also well known that the category of Hausdorff spaces and continuous functions \(\TopHaus\) is complete. Let \((X, \tau_X, +_X)\) and \((Y, \tau_Y, +_Y)\) be Hausdorff commutative monoids; we show that equalisers and products exist in \(\HausCMon\).

  Let \(f,g \in \HausCMon(X,Y)\) and let \(E = \{x \in X \mid f(x) = g(x)\}\). The equaliser of these morphisms in \(\TopHaus\) is \((E,\tau_E)\) where \(\tau_E\) is the subspace topology \(\tau_E = \{E \cap U \mid U \in \tau_X\}\). The equaliser of these morphisms in \(\CMon\) is \((E,+_E)\), where  and \(+_E\) is the restriction of \(+_X\) to domain \(E \times E\).
  Notice that \(+_E\) is continuous: it is the restriction of the continuous function \(+_X\) to the Hausdorff space \(E \times E\) with the subspace topology.
  Thus, \((E, \tau_E, +_E)\) is a Hausdorff commutative monoid.
  Moreover, considering that it is an equaliser in both \(\TopHaus\) and \(\CMon\), the corresponding universal morphisms are continuous monoid homomorphisms, and it immediately follows that \((E, \tau_E, +_E)\) is an equaliser in \(\HausCMon\).

  The product of \(X\) and \(Y\) in \(\TopHaus\) is \((X \times Y, \tau')\) where \(\tau'\) is the product topology. Their product in \(\CMon\) is \((X \times Y, +')\) where \(+'\) applies \(+_X\) to the \(X\) component and \(+_Y\) to the \(Y\) component.
  Notice that \(+'\) is continuous since it arises as the composition of continuous morphisms:
  \[\begin{tikzcd}
    {(X \times Y) \times (X \times Y)} && {X \times Y} \\
    {(X \times X) \times (Y \times Y)}
    \arrow["{+'}", from=1-1, to=1-3]
    \arrow["{+_X \times +_Y}"', from=2-1, to=1-3]
    \arrow["\cong"', from=1-1, to=2-1]
  \end{tikzcd}\]
  and, hence, \((X \times Y, \tau', +')\) is a Hausdorff commutative monoid. Moreover, the fact that it is a product in both \(\TopHaus\) and \(\CMon\) implies it is a product in \(\HausCMon\). The same argument can be used to prove that \(\HausCMon\) has all small products.

  Since \(\HausCMon\) has equalisers and small products, it is complete.
\end{proof}

\begin{proposition}
  The functor \(G \colon \HausCMon \to \SCat{ft}\) from Corollary~\ref{cor:HausCMon_ft} preserves limits.
\end{proposition}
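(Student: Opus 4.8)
The plan is to invoke the standard fact that a functor between complete categories preserves all limits as soon as it preserves products and equalisers. The source category $\HausCMon$ is complete by the preceding proposition, and $\SCat{ft}$ is complete by Proposition~\ref{prop:SCat*_locally_presentable}, so it suffices to treat these two cases. In both, $G$ leaves underlying sets unchanged and acts as the identity on morphisms, so it automatically preserves the underlying cones; what has to be checked is that the extended monoid operation (Definition~\ref{def:topological_Sigma}) of the limit in $\HausCMon$ agrees with the $\Sigma$-function of the corresponding limit in $\SCat{ft}$, as constructed in the proof of Proposition~\ref{prop:preserve_limits_chain_colimits}.

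For equalisers, take $f, g \colon X \to Y$ in $\HausCMon$ with equaliser $(E, \tau_E, +_E)$, where $E = \{x \in X \mid f(x) = g(x)\}$, $\tau_E$ is the subspace topology, and $+_E$ is the restriction of $+_X$. The equaliser of $G(f)$ and $G(g)$ in $\SCat{ft}$ is $(E, \Sigma^e)$ from~\eqref{eq:SCat_equaliser}, whose summable families are exactly those $\fml{x} \in E^*$ that are summable in $G(X)$ with limit lying in $E$. Given $\fml{x} \in E^*$, the net $\sigma \colon \fpset{\fml{x}} \to E$ of finite partial sums computed with $+_E$ is literally the net of finite partial sums computed with $+_X$, with codomain restricted to $E$; and a net valued in a subspace converges to a point of $E$ in $\tau_E$ precisely when it converges to that point in $\tau_X$. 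Hence the summable families of $G(E)$ are exactly those of $(E, \Sigma^e)$, with matching sums, so $G(E) = (E,\Sigma^e)$; since every equaliser in $\HausCMon$ arises this way and equalisers are unique up to isomorphism, $G$ preserves equalisers.

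For products, take $X, Y \in \HausCMon$ with product $(X \times Y, \tau', +')$, where $\tau'$ is the product topology and $+'$ acts componentwise. The product of $G(X)$ and $G(Y)$ in $\SCat{ft}$ is $(X \times Y, \Sigma^\times)$ from~\eqref{eq:SCat_product}. For $\fml{p} \in (X \times Y)^*$, the net of finite partial sums of $\fml{p}$ taken with $+'$ has, as its two coordinate projections, exactly the nets of finite partial sums of $\pi_X \fml{p}$ and $\pi_Y \fml{p}$; and a net converges in $\tau'$ to $(x,y)$ iff its coordinate nets converge to $x$ and to $y$. Thus $\fml{p}$ is summable in $G(X \times Y)$ iff both $\pi_X \fml{p}$ and $\pi_Y \fml{p}$ are summable, with sum the pair of sums --- precisely the definition of $\Sigma^\times$. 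The identical argument, using that a net in a product converges iff it converges in every factor, handles an arbitrary small family of objects, so $G$ preserves all small products. Combining the two cases, $G$ preserves all limits.

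The only mildly delicate point is recognising that the net of finite partial sums in a subspace (respectively, in a product) is the restriction of (respectively, is determined coordinatewise by) the corresponding ambient nets; once this bookkeeping is in place everything reduces to the elementary behaviour of net convergence under subspaces and products, so I do not anticipate a genuine obstacle.
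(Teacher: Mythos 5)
Your proposal is correct and follows essentially the same route as the paper's own proof: reduce to equalisers and small products, then use that the subspace topology (respectively, the product topology) makes convergence of the net of finite partial sums in the limit object equivalent to the condition defining the $\Sigma$-function of the equaliser~\eqref{eq:SCat_equaliser} (respectively, of the product~\eqref{eq:SCat_product}) in $\SCat{ft}$. No gaps.
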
 \begin{proof}
  First, we show that \(G\) preserves equalisers.
  Let \(X\) and \(Y\) be two Hausdorff commutative monoids and let \(f,g \in \HausCMon(X,Y)\); their equaliser \(E \in \HausCMon\) is defined in Proposition~\ref{prop:HausCMon_complete}, with inclusion morphism \(e \colon E \into X\).
  For every family \(\fml{x} \in E^*\) whose net of finite partial sums is denoted \(\sigma \colon \fpset{\fml{x}} \to E\) it follows that:
  \begin{equation*}
    \lim \sigma = x \iff \lim\, (e \circ \sigma) = e(x)
  \end{equation*}
  where we have used that \(E\) is endowed with the subspace topology.
  Since \(e\) is a monoid homomorphism it follows from the above and the definition of the extended monoid operation \(\Sigma^{G(E)}\) of \(G(E)\) that:
  \begin{equation*}
    \Sigma^{G(E)} \fml{x} \keq x \iff \Sigma^{G(X)} e\fml{x} \keq e(x)
  \end{equation*}
  where \(\Sigma^{G(X)}\) is the extended monoid operation of \(G(X)\).
  The latter two-way implication uniquely characterises the \(\Sigma\) function~\eqref{eq:SCat_equaliser} of the equaliser in \(\SCat{ft}\) and, hence, \(G\) preserves equalisers.

  To prove that \(G\) preserves products we follow a similar strategy.
  Let \(\fml{p} \in (X \times Y)^*\) and let \(\sigma\) be its net of finite partial sums. Since the product \(X \times Y\) in \(\HausCMon\) is assigned the product topology, it follows that:
  \begin{equation*}
    \lim\, \sigma = (x, y) \ \iff \ \lim\, (\pi_X \circ \sigma) = x \text{ and } \lim\, (\pi_Y \circ \sigma) = y.
  \end{equation*}
  Since the monoid operation in \(X \times Y\) is defined coordinate-wise, it follows that extended monoid operation \(\Sigma^{G(X \times Y)}\) of \(G(X \times Y)\) satisfies:
  \begin{equation*}
    \Sigma^{G(X \times Y)} \fml{p} \keq (x, y) \ \iff \ \Sigma^{G(X)} \pi_X\fml{p} \keq x \text{ and } \Sigma^{G(Y)} \pi_Y\fml{p} \keq y
  \end{equation*}
  where \(\Sigma^{G(X)}\) and \(\Sigma^{G(Y)}\) are the extended monoid operation assigned to each \(G(X)\) and \(G(Y)\) respectively.
  The latter two-way implication uniquely characterises the \(\Sigma\) function~\eqref{eq:SCat_product} of the categorical product in \(\SCat{ft}\) and, hence, \(G\) preserves products. This argument can be generalised to arbitrary small products.

  Since \(G\) preserves equalisers and small products and \(\HausCMon\) is complete, \(G\) preserves limits.
\end{proof}

\end{document}